\title{Iterated line graphs with only negative eigenvalues $-2$, their complements and energy}
\shorttitle{Iterated line graphs with only negative eigenvalues $-2$, their complements and energy}
\author{Harishchandra S. Ramane\inst{1}, B. Parvathalu\inst{2}\footnote{Corresponding Author},
        K. Ashoka\inst{3}, Daneshwari Patil\inst{1}
       }
\institute{\centering \inst{1}
           Department of Mathematics, Karnatak University, Dharwad 580003, Karnataka, India \\
           {\tt hsramane@yahoo.com, daneshwarip@gmail.com}
           \inst{2}
           Department of Mathematics, Karnatak University's Karnatak Science/Arts College  Dharwad  580001, Karnataka, India \\
           {\tt bparvathalu@gmail.com}
           \inst{3}
           Department of Mathematics, Christ University, Bangalore 560029, Karnataka, India\\
           {\tt ashokagonal@gmail.com}
          }
\abstract{The graphs with all equal negative or positive eigenvalues are special kind in the spectral graph theory.  In this article, several iterated line graphs $\mathcal{L}^k(G)$ with all equal negative eigenvalues $-2$ are characterized for $k\ge 1$ and their energy consequences are presented. Also the spectra and the energy of complement of these graphs are obtained, interestingly they have exactly two positive eigenvalues with different multiplicities. Moreover, we characterize a large class of equienergetic graphs which extend the existing results.
}
\keywords{Energy of a graph, Eigenvalues of graphs, Equitable partition, Iterated line graphs, Complement of a graph}
\begin{document}
\maketitle

\section{Introduction}\label{sec1}
Spectral graph theory is aimed at answering the questions related to the graph theory in terms of eigenvalues of matrices which are naturally associated with graphs. Graphs with all equal positive or negative eigenvalues are very special kind. The line graph of a graph has a special property that its least eigenvalue is not smaller than $-2$ \cite{Hoffman}. In the last two decades the graphs with least eigenvalues $-2$ have been well studied \cite{Cvetkovic_Sgen,Cvetten_yrs,VijayaKumar}. The problem of particular interest in this class of graphs is the graphs with all negative eigenvalues equal to $-2$. This problem can be restated in terms of the eigenvalues of signless Laplacian matrix Q for line graphs as the graphs with all its signless Laplacian eigenvalues belongs to the set $[2, \infty)\cup\{0\}$ with the help of the relation \eqref{line_eig_snles_eqn}. In \cite{HSRamane} Ramane et al. obtained the spectra and the energy of iterated regular line graphs $\mathcal{L}^k(G)$ for $k\ge 2$  and thus gave infinitely many pairs of non-trivial equienergetic graphs which belong to the above class of graphs. Let $\rho$  be the property that a graph $G$  has all its negative eigenvalues equal to $-2$. In this paper we are motivated to find several classes of iterated line graphs $\mathcal{L}^k(G)$ with the property $\rho$ for $k\ge 1$, spectra of their complements and energy consequences. As a consequence of the energy of these graphs we present a large class of equienergetic graphs which generalize the existing results. The energy relation between a graph $G$ and its complement were studied in \cite{Mojallal,Nikiforov,ramathesis,Ramane_Bp,Ramane-2023}, we extend the results pertaining $\mathcal{E}(\overline{G}) = \mathcal{E}(G)$
to non-regular iterated line graphs with the property $\rho$. The energy of line graphs is well studied in \cite{Kinkar_Das,Gutman_line,Hyper,HSRamane,OscarRojo}. In this paper, we also present hyperenergetic iterated line graphs and their complements.  The software sage \cite{sage} is used to verify some of the results.\\
This paper is organized as follows: Section $2$ introduces basic definitions, equitable partitions and known results on the spectra and energy of graphs. In Section $3$, it is proven that the two distinct quotient matrices defined for an equitable partition of the $H$-join of regular graphs produces the same partial spectrum for the adjacency matrix, Laplacian matrix and signless Laplacian matrix. Section $4$ presents findings on the spectra and energy of iterated line graphs that satisfy the property $\rho$. Section $5$ discusses the spectra and energy of the complements of the graphs covered in Section $4$. Section $6$ provides an upper bound for the independence number of iterated line graphs and their complements along with a theorem regarding the minimum order of a connected graph whose complement is also connected and considers line graphs satisfying the property $\rho$.
\section{Preliminaries}
In this paper, simple, undirected and finite graph $G$ are considered with vertex set $V(G)=\{v_1, v_2, \ldots, v_n\}$ and edge set $E(G)=\{e_1, e_2, \ldots, e_m\}$. Let $e=v_iv_j$ be an edge of $G$ with its end vertices $v_i$ and $v_j$. The order and size of a graph $G$ refer to the number of vertices and the number of edges, respectively. The complement $\overline{G}$ of a graph $G$ has same vertices as in $G$ but two vertices are adjacent in $\overline{G}$ if and only if they are not adjacent in $G$. Let the subgraph of a graph $G$ obtained by deleting vertices $v_1, v_2, \ldots, v_k$, $k<n$ and the edges incident to them in $G$ be $G-\{v_1, v_2, \ldots, v_k\}$ and simply $G-v$ if one vertex $v$ and edges incident to it are deleted. The {\bf adjacency matrix} $A(G)$ or simply $A$ of a graph $G$ of order $n$ is the $n\times n$ matrix indexed by $V(G)$ whose $(i, j)$-th entry is defined as $a_{ij} = 1$ if $v_iv_j\in E(G)$ and $0$ otherwise. The \textbf{Laplacian} $L$ and \textbf{signless Laplacian} $Q$ of a graph are defined as $L = D - A$ and $Q = D + A$, respectively, where $D = [d_i]$ is the diagonal degree matrix of appropriate order, and the $i$-th diagonal entry $d_i$ represents the degree of vertex $v_i$.
The matrix 
$Q$ is positive semidefinite and possesses real eigenvalues. The spectrum of a square matrix $M$ is the multiset of its eigenvalues, including their algebraic multiplicities. Denote the  characteristic polynomial and the spectrum of a matrix $M$ associated to a graph $G$, respectively by $\varphi(M(G);x)$ and $Sp_M(G)$. Let $a Sp_M(G)\pm b = \{a\lambda\pm b: \lambda\in Sp_M(G)\}$ for any real numbers $a$ and $b$. Let the difference between two sets $A$ and $B$ is denoted by $A \setminus B$. The spectrum of a graph is the spectrum of its adjacency matrix $A$. The inertia of a graph $G$ is the triplet $(n^+, n^-, n^0)$, representing the number of positive, negative and zero eigenvalues, respectively. 
Given a graph $G$ we denote the spectrum of adjacency matrix and signless Laplacian matrix respectively by $Sp_A(G)=\{\lambda_1^{m_1}, \lambda_2^{m_2}, \ldots, \lambda_k^{m_k}\}$ and $Sp_Q(G)=\{q_1^{m_1}, q_2^{m_2}, \ldots, q_k^{m_k}\}$, where $\lambda_{i}$'s and $q_{i}$'s are indexed in descending order, and $m_i$ is the multiplicity of the respective eigenvalue for $1\le i\le k$. Denote the least eigenvalue of signless Laplacian by $q_{\min}$. The {\bf energy} \cite{Gutman} of a graph $G$  is defined as $\mathcal{E}(G) = \sum\limits_{i=1}^{n}\lvert\lambda_i\rvert = 2\sum\limits_{i=1}^{n^+}\lambda_{i} = -2\sum\limits_{i=1}^{n^-}\lambda_{n-i+1}$. Two graphs $G_1$ and $G_2$ having the same order are called {\bf equienergetic} if they share the same energy.  \noindent A set of vertices in a graph $G$ is independent if no two vertices in the set are adjacent. The {\bf independence number} $\alpha{(G)}$ of a graph $G$ is the maximum cardinality of an independent set of $G$. As usual the graphs $C_n, K_n$ and $P_n$ denote the cycle graph, complete graph and path graph on $n$ vertices respectively. For additional notation and terminology,  we refer \cite{Cvetk_book}.

\begin{definition} \cite{Turandef}
	The {\bf Tur\'{a}n graph} $T_r(n)$, $r\ge 1$, is the complete $r$-partite graph of order $n$ with all parts of size either $\lfloor n/r \rfloor$ or $\lceil n/r \rceil$.
\end{definition}

\begin{definition}\cite{Harary}
	The {\bf line graph} $\mathcal{L}(G)$ of a graph $G$ is constructed by taking the edge set of $G$ as the vertex set of $\mathcal{L}(G)$. Two vertices in $\mathcal{L}(G)$ are connected by an edge if their corresponding edges in $G$ share a common vertex. The $k$-th {\bf iterated line graph} of $G$, denoted by $\mathcal{L}^k(G)$ for $k\in\{0, 1, 2,\ldots\}$, is defined as follows: $\mathcal{L}^0(G) = G$ and for $k \geq 1$, $\mathcal{L}^k(G) = \mathcal{L}(\mathcal{L}^{k-1}(G))$.
\end{definition}
Let $n_k$ and $m_k$ be the order and size of $\mathcal{L}^k(G)$ for $k \in \{0, 1, 2,\ldots\}$. It is noted that $n_k=m_{k-1}$ for $k \in \{1, 2,\ldots\}$. Let us denote the eigenvalues of $\mathcal{L}^k(G)$ and its complement $\overline{\mathcal{L}^k(G)} $ respectively  by $\lambda_{k(j)}$ and $\overline{\lambda}_{k(j)}$ for $k \in \{1, 2,\ldots\}$ and $1\le j\le n_k$. The complement of a line graph is also called jump graph \cite{Jump1}.

\begin{theorem}{\rm \cite{Cvetk_book}}\label{reg_compl}
	Suppose $G$ is an $r$-regular graph of order $n$ with its spectrum $Sp_A(G)=\{r, \lambda_2, \ldots, \lambda_n\}$, then its complement $\overline{G}$ is a $(n-r-1)$-regular graph with the spectrum $Sp_A(\overline{G})=\{n-r-1, -1-\lambda_n, \ldots, -1-\lambda_2\}$.
\end{theorem}
\begin{theorem}{\rm \cite{Sachs}}\label{line_eigv}
	Suppose $G$ is an $r$-regular graph of order $n$ and size $m$ with its spectrum $Sp_A(G)=\{r, \lambda_2, \ldots, \lambda_n\}$, then the line graph of $G$ is a $(2r-2)$-regular graph with the spectrum $Sp_A(\mathcal{L}(G))=\{2r-2, \lambda_2+r-2, \ldots, \lambda_n+r-2, -2^{m-n}\}$.
\end{theorem}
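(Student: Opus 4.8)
The plan is to realize both $A(G)$ and $A(\mathcal{L}(G))$ through the vertex–edge incidence matrix $B$ of $G$, the $n\times m$ matrix whose $(i,e)$ entry is $1$ when $v_i$ is an endpoint of $e$ and $0$ otherwise, and then to exploit the fact that $BB^{T}$ and $B^{T}B$ share the same nonzero spectrum.

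First I would record the two standard identities. Counting common endpoints gives $B^{T}B = 2I_m + A(\mathcal{L}(G))$, since each diagonal entry counts the two endpoints of an edge while each off-diagonal entry counts the vertices shared by two distinct edges, which is exactly $1$ when the edges are adjacent and $0$ otherwise. On the other side, $BB^{T} = D + A(G)$, and because $G$ is $r$-regular this becomes $BB^{T} = rI_n + A(G)$. Consequently the eigenvalues of $BB^{T}$ are precisely $\lambda_i + r$ for $i = 1, \ldots, n$, including the value $\lambda_1 + r = 2r$ coming from $\lambda_1 = r$.

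Next I would invoke the linear-algebra fact that for any rectangular matrix the products $BB^{T}$ and $B^{T}B$ have identical nonzero eigenvalues with identical multiplicities, the larger product carrying the remaining eigenvalues as zeros. Since $BB^{T}$ is $n\times n$ and $B^{T}B$ is $m\times m$ with $m \ge n$, the spectrum of $B^{T}B$ is $\{\lambda_i + r : 1 \le i \le n\}$ together with $0$ repeated $m - n$ times. Positive semidefiniteness of $BB^{T}$ also guarantees $\lambda_i + r \ge 0$, so no sign ambiguity arises when reading off these eigenvalues.

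Finally I would translate back using $A(\mathcal{L}(G)) = B^{T}B - 2I_m$, which simply shifts every eigenvalue of $B^{T}B$ down by $2$. This yields $\lambda_i + r - 2$ for $i = 1, \ldots, n$ — with the top value $\lambda_1 + r - 2 = 2r - 2$ confirming that $\mathcal{L}(G)$ is $(2r-2)$-regular — and $-2$ with multiplicity $m - n$ arising from the zero eigenvalues of $B^{T}B$, which is exactly the claimed spectrum. The point requiring the most care, and the main obstacle, is the multiplicity bookkeeping at the third step: one must ensure that the $m-n$ zero eigenvalues are attributed to the larger product and that the regularity hypothesis (via $m = nr/2 \ge n$) keeps the two blocks from being confused, after which the rest is a routine eigenvalue shift.
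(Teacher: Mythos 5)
Your proof is correct. There is nothing in the paper to compare it against: the paper quotes this statement in its preliminaries as a known classical result of Sachs, with a citation and no proof. Your argument --- writing $BB^{T} = rI_n + A(G)$ and $B^{T}B = 2I_m + A(\mathcal{L}(G))$ for the incidence matrix $B$, invoking the equality of the nonzero spectra of $BB^{T}$ and $B^{T}B$, and shifting by $-2$ --- is precisely the standard derivation of Sachs's theorem, and your multiset bookkeeping is sound: since the multiplicity of $0$ in $B^{T}B$ is $m-\mathrm{rank}(B)$ and in $BB^{T}$ is $n-\mathrm{rank}(B)$, one has $Sp(B^{T}B)=Sp(BB^{T})\cup\{0^{m-n}\}$ as multisets regardless of whether $BB^{T}$ is singular, so the bipartite case (where $\lambda_n+r-2=-2$ merges with the exceptional eigenvalues) causes no trouble, and the hypothesis $r\ge 2$ implicit in the statement guarantees $m = nr/2 \ge n$. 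One small logical slip: the $(2r-2)$-regularity of $\mathcal{L}(G)$ does not follow from the top eigenvalue equalling $2r-2$ (a largest eigenvalue alone never certifies regularity); it should be observed directly, since the edge $uv$ of $G$ is adjacent in $\mathcal{L}(G)$ to exactly $d_u+d_v-2 = 2r-2$ other edges. With that one-line repair the argument is complete.
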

\noindent 
Let $n$ and $m$ represent the order and size of a graph $G$, respectively. The relationship between the eigenvalues of the line graph $\mathcal{L}(G)$ and the signless Laplacian $Q$ of $G$ is given by \cite{cvet_signless}: \begin{equation}\label{line_eig_snles_eqn} Sp_A\big(\mathcal{L}(G)\big) = \{-2^{m-n}\} \cup \big(Sp_Q(G)-2\big). \end{equation} The multiplicity of the eigenvalue $-2$ in $\mathcal{L}(G)$ is $m - n + 1$ if $G$ is bipartite, and $m - n$ if $G$ is not bipartite \cite{Cvetkovic_Sgen}.
\begin{theorem}{\rm\cite{Kinkar_Das}}\label{Kinkar}
	Let $G$ be a graph with order $n (> 2)$, $m$ edges and minimum degree $\delta$. If $\delta \ge \frac{n}{2} + 1$, then the line graph $\mathcal{L}(G)$ satisfies the property $\rho$ and has the energy of $4(m-n)$. Thus, all line graphs of such graphs with order $n$, size $m$ and minimum degree $\delta \ge \frac{n}{2} + 1$ are mutually equienergetic.
\end{theorem}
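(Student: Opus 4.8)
The plan is to translate property $\rho$ for $\mathcal{L}(G)$ into a lower bound on the least signless Laplacian eigenvalue of $G$, and then to extract that bound from the hypothesis $\delta\ge n/2+1$ by passing to the complement. First I would observe that $\delta\ge n/2+1$ forces $G$ to be connected and non-bipartite: if $G$ were disconnected it would have a component on at most $n/2$ vertices, whose vertices have degree at most $n/2-1<\delta$, a contradiction; and if $G$ were bipartite with parts of sizes $a\le b$, then every vertex in the larger part would have degree at most $a\le n/2<\delta$, again impossible. Consequently, by the multiplicity statement following \eqref{line_eig_snles_eqn}, the eigenvalue $-2$ of $\mathcal{L}(G)$ has multiplicity exactly $m-n$, and \eqref{line_eig_snles_eqn} shows that the remaining eigenvalues of $\mathcal{L}(G)$ are precisely $q_i-2$, where $q_i$ ranges over $Sp_Q(G)$. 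Since the least eigenvalue of a line graph is at least $-2$, property $\rho$ is equivalent to the absence of eigenvalues of $\mathcal{L}(G)$ in the open interval $(-2,0)$, which in turn is equivalent to $q_i\ge 2$ for every $q_i\in Sp_Q(G)$, i.e.\ to $q_{\min}(G)\ge 2$.

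The heart of the argument is therefore the inequality $q_{\min}(G)\ge 2$, and the key step is the complement identity
$$Q(G)+Q(\overline{G})=(n-2)I+J,$$
which follows at once from $D(G)+D(\overline{G})=(n-1)I$ and $A(G)+A(\overline{G})=J-I$. Writing $Q(G)=(n-2)I+J-Q(\overline{G})$ and applying Weyl's inequality together with $\lambda_{\min}(J)=0$ gives $q_{\min}(G)\ge (n-2)-q_{\max}(\overline{G})$. Now $q_{\max}(\overline{G})\le 2\Delta(\overline{G})$, where $\Delta(\overline{G})=n-1-\delta$ is the maximum degree of $\overline{G}$; indeed $q_{\max}(\overline{G})=\lambda_{\max}(D+A)\le \lambda_{\max}(D)+\lambda_{\max}(A)\le 2\Delta(\overline{G})$. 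Combining these estimates yields
$$q_{\min}(G)\ge (n-2)-2(n-1-\delta)=2\delta-n\ge 2,$$
the last inequality being exactly the hypothesis $\delta\ge n/2+1$.

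With $q_{\min}(G)\ge 2$ in hand, every $q_i-2\ge 0$, so all negative eigenvalues of $\mathcal{L}(G)$ equal $-2$; that is, $\mathcal{L}(G)$ satisfies $\rho$. Since exactly $m-n$ eigenvalues equal $-2$ and the energy of a graph all of whose negative eigenvalues equal $-2$ is $4$ times the number of such eigenvalues (because the positive and negative parts of the spectrum contribute equal absolute sums), we obtain $\mathcal{E}(\mathcal{L}(G))=4(m-n)$. Finally, any two graphs of the same order $n$ and size $m$ satisfying $\delta\ge n/2+1$ have line graphs of the same order $m$ and the same energy $4(m-n)$, hence are mutually equienergetic. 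The main obstacle is precisely the lower bound $q_{\min}(G)\ge 2$; everything else is bookkeeping around \eqref{line_eig_snles_eqn}. The complement identity is what makes this bound transparent: attempting to bound $q_{\min}(G)$ directly from the quadratic form $\sum_{ij\in E}(x_i+x_j)^2$ is awkward, whereas transferring the problem to $q_{\max}(\overline{G})$ reduces it to the elementary degree estimate.
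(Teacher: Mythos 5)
Your proof is correct, but there is nothing in the paper to compare it against: Theorem \ref{Kinkar} appears in the Preliminaries with the citation \cite{Das-Mojallal-79-2016} and is used as a known result, with no proof supplied. Judged on its own, your argument is sound and self-contained. The reduction of property $\rho$ to $q_{\min}(G)\ge 2$ via \eqref{line_eig_snles_eqn} is right, and your preliminary observations that $\delta\ge n/2+1$ forces $G$ to be connected and non-bipartite are exactly what is needed both to pin the multiplicity of $-2$ at $m-n$ and to exclude $0\in Sp_Q(G)$ (without which ``all $q_i\ge 2$'' is sufficient for $\rho$ but not equivalent to it). The core chain $Q(G)+Q(\overline{G})=(n-2)I+J$, Weyl's inequality with $\lambda_{\min}(J)=0$, and $q_{\max}(\overline{G})\le 2\Delta(\overline{G})=2(n-1-\delta)$ correctly yields $q_{\min}(G)\ge 2\delta-n\ge 2$; the energy count $4(m-n)$ and the equienergeticity statement then follow from the trace being zero. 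Two remarks. Your intermediate bound $q_{\min}(G)\ge 2\delta-n$ is tight for $K_n$, so it is a clean quotable lemma in its own right; and the last step could equally be obtained from the paper's Ger\v{s}gorin Theorem \ref{Gersgorin} applied to $Q(\overline{G})$. It is also worth seeing why your detour through the complement is genuinely needed: the paper's own device for certifying $q_{\min}\ge 2$ elsewhere (Theorems \ref{Iter_line-2} and \ref{min_deg4}) is Desai--Rao's inequality $q_{\min}\ge\lambda_{\min}+\delta$ (Proposition \ref{Desai_prop}), which requires a lower bound on $\lambda_{\min}$ such as the $-2$ available for line graphs; for a general graph with $\delta\ge n/2+1$ no such bound is given a priori, and your complement identity substitutes purely degree-based information for the missing spectral information.
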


\begin{theorem}{\rm\cite{Hyper}}\label{hyper_en}
	Let $G$ be a graph of order $n(\ge 5)$ and size $m$. If $m\ge 2n$, the line graph $\mathcal{L}(G)$ is hyperenergetic.
\end{theorem}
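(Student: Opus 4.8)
The plan is to work directly from the definition of hyperenergeticity. Since the vertices of $\mathcal{L}(G)$ are the $m$ edges of $G$, the line graph has $m$ vertices, so it is hyperenergetic precisely when $\mathcal{E}(\mathcal{L}(G)) > \mathcal{E}(K_m) = 2(m-1)$. The natural tool is relation \eqref{line_eig_snles_eqn}, which expresses the entire adjacency spectrum of $\mathcal{L}(G)$ through the signless Laplacian spectrum of $G$; this lets me compute a sharp lower bound for the energy without ever diagonalizing $A(\mathcal{L}(G))$ directly.

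First I would record the energy formula implied by \eqref{line_eig_snles_eqn}. Writing $Sp_Q(G) = \{q_1, \ldots, q_n\}$ with every $q_j \ge 0$ (as $Q$ is positive semidefinite), the spectrum of $\mathcal{L}(G)$ consists of the eigenvalue $-2$ with multiplicity $m-n$ together with the $n$ numbers $q_j - 2$. Taking absolute values and summing gives
\begin{equation*}
\mathcal{E}(\mathcal{L}(G)) = 2(m-n) + \sum_{j=1}^{n} |q_j - 2|.
\end{equation*}
Here $m \ge 2n > n$ keeps the multiplicity $m-n$ positive, and the bipartite/non-bipartite distinction in the multiplicity of $-2$ is harmless, because a zero eigenvalue $q_j = 0$ contributes $|0-2| = 2$, exactly matching a $-2$ eigenvalue.

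Next I would bound the remaining sum from below by the triangle inequality, using that the trace of $Q = D + A$ equals $\sum_{i} d_i = 2m$:
\begin{equation*}
\sum_{j=1}^{n} |q_j - 2| \ge \left| \sum_{j=1}^{n} (q_j - 2) \right| = |2m - 2n| = 2(m-n),
\end{equation*}
where the last equality uses $m > n$. Substituting back yields $\mathcal{E}(\mathcal{L}(G)) \ge 4(m-n)$, and it then remains only to check $4(m-n) > 2(m-1)$, which rearranges to $m > 2n-1$; this is implied by the hypothesis $m \ge 2n$, giving the strict inequality and hence hyperenergeticity.

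I expect the only delicate point to be confirming that the hypotheses are consistent and that strictness survives. The role of $n \ge 5$ is exactly to make $m \ge 2n$ attainable by a simple graph, since $\binom{n}{2} \ge 2n$ forces $n \ge 5$; thus for $n \le 4$ the premise is never met and $n \ge 5$ is in fact implied by $m \ge 2n$. Crucially, the strict inequality does not come from the triangle inequality (which may be an equality, e.g.\ when all $q_j \ge 2$, as for $G = K_5$) but purely from the arithmetic step $m \ge 2n \Rightarrow 4(m-n) > 2(m-1)$, so no case analysis on the $q_j$ is needed. The main conceptual hurdle is simply recognizing that cruder estimates such as $\mathcal{E}(\mathcal{L}(G)) \ge 2\lambda_1$ are far too weak, whereas the spectral identity \eqref{line_eig_snles_eqn} together with a single triangle inequality is already sharp, with equality in the bound $\mathcal{E}(\mathcal{L}(G)) = 4(m-n)$ attained by $\mathcal{L}(K_5)$.
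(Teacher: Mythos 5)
Your proposal is correct, but there is nothing in the paper to compare it against: Theorem~\ref{hyper_en} is stated as a known result quoted from \cite{Hou-Gutman-29-2001} and is used as a black box later (e.g.\ in Theorem~\ref{hyper_en_itLine}); the paper contains no proof of it. Judged on its own, your argument is sound and complete. The identity $\mathcal{E}(\mathcal{L}(G)) = 2(m-n)+\sum_{j=1}^{n}\lvert q_j-2\rvert$ is a valid multiset consequence of \eqref{line_eig_snles_eqn} whenever $m\ge n$ (which $m\ge 2n$ guarantees), and you correctly absorb the bipartite case, since each $q_j=0$ simply becomes one more eigenvalue $-2$; the triangle inequality with $\mathrm{tr}(Q)=2m$ then gives $\mathcal{E}(\mathcal{L}(G))\ge 4(m-n)$, and strictness comes from the integer step $m\ge 2n \Rightarrow m>2n-1 \Leftrightarrow 4(m-n)>2(m-1)$, exactly as you say. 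Your side remarks also check out: for simple graphs $m\le\binom{n}{2}$, so $m\ge 2n$ forces $n\ge 5$, making that hypothesis redundant; and $\mathcal{E}(\mathcal{L}(K_5))=20=4(10-5)$ attains your bound.

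Two small observations. First, the triangle inequality over the $q_j$ is slightly more than you need: since relation \eqref{line_eig_snles_eqn} already guarantees at least $m-n$ eigenvalues equal to $-2$ and all remaining eigenvalues of $\mathcal{L}(G)$ are at least $-2$, the formula $\mathcal{E}=-2\sum_{\lambda_i<0}\lambda_i$ gives $\mathcal{E}(\mathcal{L}(G))\ge 4(m-n)$ directly. Second, your equality analysis ties in nicely with the rest of the paper: equality $\mathcal{E}(\mathcal{L}(G))=4(m-n)$ in your bound holds precisely when every $q_j\ge 2$, i.e.\ $q_{\min}(G)\ge 2$, which via \eqref{line_eig_snles_eqn} is exactly the paper's property $\varrho$ for $\mathcal{L}(G)$; this is why the graphs constructed in Sections~4 and~5 all have line-graph energy exactly $4(m_{k-1}-n_{k-1})$.
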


\noindent The following is an elegant, though not sharp, relation between the smallest eigenvalue $\lambda_{\text{min}}$ of $A$ and the smallest eigenvalue $q_{\text{min}}$ of $Q$.

\begin{proposition}{\rm\cite{Desai}}\label{Desai_prop}
	If $G$ is a graph with minimum degree $\delta$ and maximum degree $\Delta$, then $$ q_{min}-\Delta \le \lambda_{min} \le q_{min}-\delta.$$
\end{proposition}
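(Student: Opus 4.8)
The plan is to exploit the decomposition $A = Q - D$ together with the variational (Rayleigh quotient) characterization of the smallest eigenvalue of a real symmetric matrix. Recall that for any symmetric matrix $M$ one has $\lambda_{min}(M) = \min_{\|x\|=1} x^{T} M x$, with the minimum attained at a unit eigenvector associated to $\lambda_{min}(M)$. The only structural fact needed about $D$ is that it is diagonal with entries between $\delta$ and $\Delta$, so that $\delta \le x^{T} D x \le \Delta$ for every unit vector $x$.

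First I would establish the upper bound $\lambda_{min} \le q_{min} - \delta$. Let $u$ be a unit eigenvector of $Q$ with $u^{T} Q u = q_{min}$. Since $\lambda_{min}$ is the minimum of the Rayleigh quotient of $A$, testing it against $u$ gives
$$\lambda_{min} \le u^{T} A u = u^{T} Q u - u^{T} D u = q_{min} - u^{T} D u \le q_{min} - \delta,$$
where the final inequality uses $u^{T} D u \ge \delta$.

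Next I would establish the lower bound $q_{min} - \Delta \le \lambda_{min}$. Let $w$ be a unit eigenvector of $A$ with $w^{T} A w = \lambda_{min}$. Then
$$\lambda_{min} = w^{T} A w = w^{T} Q w - w^{T} D w \ge q_{min} - \Delta,$$
since $w^{T} Q w \ge q_{min}$ (again by the variational characterization for $Q$) and $w^{T} D w \le \Delta$. Combining the two displays yields the claim.

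Equivalently, one could invoke Weyl's inequalities applied to the sum $A = Q + (-D)$: the lower bound follows from $\lambda_{min}(Q + (-D)) \ge \lambda_{min}(Q) + \lambda_{min}(-D)$ and the upper bound from $\lambda_{min}(Q + (-D)) \le \lambda_{min}(Q) + \lambda_{max}(-D)$, after noting that $-D$ has extreme eigenvalues $-\Delta$ and $-\delta$. There is no substantive obstacle here; the argument is a one-line variational estimate in each direction. The only point requiring care is the bookkeeping for the sign reversal in passing from $D$ to $-D$ (so that the maximum degree controls the lower bound while the minimum degree controls the upper bound), and the observation that in each direction one must test the Rayleigh quotient against the extremal eigenvector of the \emph{other} matrix in order to orient the inequality correctly.
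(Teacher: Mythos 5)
Your proof is correct: both directions of the variational argument are sound, since for any unit vector $x$ one has $\delta \le x^{T}Dx \le \Delta$, and testing the Rayleigh quotient of each matrix against the extremal eigenvector of the other orients the two inequalities exactly as you describe; the Weyl-inequality variant you sketch is equally valid. Note, however, that the paper itself offers no proof of this statement --- it is quoted as Proposition \ref{Desai_prop} directly from the cited reference of Desai and Rao --- so there is no internal argument to compare yours against. Your write-up is essentially the standard (and probably the shortest) proof of this bound: decompose $A = Q - D$, bound the diagonal term by the degree extremes, and use the minimax characterization of $\lambda_{min}$ and $q_{min}$. In the context of this paper, this inequality is used only as an imported tool (e.g.\ in Theorem \ref{Iter_line-2} and Theorem \ref{min_deg4} to get $q_{min} \ge \lambda_{min} + \delta \ge 2$), so supplying a self-contained proof like yours would in fact make that part of the paper independent of the external reference.
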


\begin{proposition}{\rm\cite{de_Lima}}\label{d_Lima}
	If $G$ is a spanning subgraph of a graph $G'$, then $q_{min}(G)\le q_ {min}(G')$.
\end{proposition}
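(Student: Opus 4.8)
The plan is to exploit the positive semidefiniteness of the signless Laplacian together with its representation as a sum over edges. First I would record the fundamental quadratic form identity: for any graph $H$ on the vertex set $\{v_1,\dots,v_n\}$ and any vector $x=(x_1,\dots,x_n)^T\in\mathbb{R}^n$,
$$x^T Q(H)\,x \;=\; \sum_{v_iv_j\in E(H)} (x_i+x_j)^2,$$
which follows directly from $Q=D+A$ by expanding $\sum_i d_i x_i^2 + 2\sum_{v_iv_j\in E(H)} x_i x_j$ and grouping the terms edge by edge. In particular each summand is nonnegative, which recovers the already noted fact that $Q$ is positive semidefinite.

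Next, since $G$ is a spanning subgraph of $G'$, the two graphs share the same vertex set and satisfy $E(G)\subseteq E(G')$, so $Q(G)$ and $Q(G')$ are symmetric matrices of the same order $n$. Comparing the two edge sums termwise, and using that every term $(x_i+x_j)^2$ is nonnegative, gives
$$x^T Q(G')\,x \;=\; \sum_{v_iv_j\in E(G')}(x_i+x_j)^2 \;\ge\; \sum_{v_iv_j\in E(G)}(x_i+x_j)^2 \;=\; x^T Q(G)\,x$$
for every $x\in\mathbb{R}^n$. Equivalently, $Q(G')-Q(G)$ is positive semidefinite; indeed it is itself the signless Laplacian of the spanning subgraph of $G'$ with edge set $E(G')\setminus E(G)$.

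Finally I would invoke the Rayleigh quotient characterization of the least eigenvalue of a symmetric matrix, $q_{min}(M)=\min_{x\ne 0} \dfrac{x^T M x}{x^T x}$. Since the numerator for $G'$ dominates that for $G$ pointwise, the inequality passes to the minima: for every $x\neq 0$ one has $\dfrac{x^T Q(G')x}{x^T x}\ge \dfrac{x^T Q(G)x}{x^T x}\ge q_{min}(G)$, so $q_{min}(G)$ is a lower bound for the left-hand quotient, whence $q_{min}(G')\ge q_{min}(G)$, which is the claim. Alternatively one could apply Weyl's inequality $\lambda_{min}(Q(G)+P)\ge \lambda_{min}(Q(G))+\lambda_{min}(P)$ with $P=Q(G')-Q(G)$ and $\lambda_{min}(P)\ge 0$.

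The argument is elementary, so I do not anticipate a serious obstacle; the only point requiring genuine care is that the comparison is legitimate precisely because $G$ is a \emph{spanning} subgraph, which guarantees equal vertex sets and hence matrices of equal order to which the min–max principle applies directly. Were $G$ merely an arbitrary (non-spanning) subgraph on fewer vertices, this pointwise monotonicity would fail to apply and one would instead need an interlacing-type argument.
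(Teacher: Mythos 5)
Your proof is correct and complete. Note that the paper itself gives no proof of this proposition: it is quoted as a known preliminary from the cited reference of de Lima, Oliveira, de Abreu and Nikiforov, so there is no internal argument to compare against; your route --- the edge-sum identity $x^TQx=\sum_{v_iv_j\in E}(x_i+x_j)^2$, the resulting pointwise domination $x^TQ(G')x\ge x^TQ(G)x$ for spanning subgraphs, and the Rayleigh--Ritz characterization of $q_{\min}$ --- is precisely the standard argument used in that literature. Your closing caveat is also well taken: the hypothesis that $G$ is \emph{spanning} is what makes the two matrices comparable on the same space, and dropping it would break the argument, since a principal submatrix of $Q(G')$ is not the signless Laplacian of the corresponding induced subgraph.
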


\begin{theorem}{\rm\cite{HeChang}}\label{He_Chang}
	If $G$ is a graph with vertex $v$, then $q_{min}(G)-1\le q_{min}(G-v)$.
\end{theorem}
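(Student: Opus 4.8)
The goal is the upper estimate $q_{min}(G)\le q_{min}(G-v)+1$, so the natural tool is the variational (Courant--Fischer) description $q_{min}(G)=\min_{x\ne 0}\frac{x^{T}Q(G)x}{x^{T}x}$ together with a judiciously chosen trial vector coming from $G-v$. The engine of the whole argument is the quadratic-form identity for the signless Laplacian, $x^{T}Q(G)x=\sum_{v_iv_j\in E(G)}(x_i+x_j)^{2}$, which follows at once from the rank-one decomposition $Q=D+A=\sum_{e=v_iv_j}(\mathbf{e}_i+\mathbf{e}_j)(\mathbf{e}_i+\mathbf{e}_j)^{T}$ (and which also re-proves that $Q$ is positive semidefinite). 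The plan is to feed into this identity a single well-chosen vector supported away from $v$.

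Concretely, I would split $E(G)=E(G-v)\sqcup\{vu:u\sim v\}$, so that for every $x$ one has $x^{T}Q(G)x=\sum_{v_iv_j\in E(G-v)}(x_i+x_j)^{2}+\sum_{u\sim v}(x_u+x_v)^{2}$. Next, pick a unit eigenvector $y$ of $Q(G-v)$ belonging to $q_{min}(G-v)$ and extend it to a vector $x$ on $V(G)$ by declaring $x_v=0$. On this vector the first sum equals exactly $y^{T}Q(G-v)y=q_{min}(G-v)$, while the second collapses to $\sum_{u\sim v}x_u^{2}$ because $x_v=0$. Since this is a partial sum of $\sum_i x_i^{2}=\|y\|^{2}=1$, it is at most $1$, and substituting $x$ into the Rayleigh quotient yields $q_{min}(G)\le x^{T}Q(G)x=q_{min}(G-v)+\sum_{u\sim v}x_u^{2}\le q_{min}(G-v)+1$, which is the claim.

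I expect no serious obstacle beyond the conceptual point that an upper bound on a \emph{minimum} eigenvalue is obtained by exhibiting one good test vector, and that extending the eigenvector of $G-v$ by a zero at $v$ is precisely what decouples the two edge-sums and caps the extra contribution at the single deleted coordinate's worth of mass, namely $1$. The only spot demanding care is the degree bookkeeping hidden in the split of $E(G)$, since the diagonal (degree) part of $Q$ must be apportioned correctly between the two sums; the edge-wise form $\sum_e(x_i+x_j)^{2}$ handles this automatically. A fully equivalent alternative avoids trial vectors: deleting row and column $v$ from $Q(G)$ leaves the principal submatrix $Q(G-v)+\operatorname{diag}(a)$, where $a$ is the $\{0,1\}$-valued indicator vector of the neighbours of $v$ in $G-v$; Cauchy interlacing then gives $q_{min}(G)\le\lambda_{min}\bigl(Q(G-v)+\operatorname{diag}(a)\bigr)$, and Weyl's inequality $\lambda_{min}(M+P)\le\lambda_{min}(M)+\lambda_{max}(P)$ with $P=\operatorname{diag}(a)$ (whose largest eigenvalue is at most $1$) completes the estimate.
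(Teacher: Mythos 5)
Your proposal is correct, and there is nothing in the paper to compare it against: the paper states this result as a quoted theorem from He and Pan \cite{He-Pan-473-2012} and gives no proof of it, so any valid argument here is necessarily ``different from the paper's.'' Both of your routes are sound. The main argument is airtight: the identity $x^{T}Q(G)x=\sum_{v_iv_j\in E(G)}(x_i+x_j)^{2}$ is the right engine, the split $E(G)=E(G-v)\sqcup\{vu:u\sim v\}$ is exact, extending a unit $q_{min}(G-v)$-eigenvector $y$ by $x_v=0$ keeps $\|x\|=1$ and makes the cross-edge contribution collapse to $\sum_{u\sim v}x_u^{2}\le\|y\|^{2}=1$, and Rayleigh--Ritz then gives $q_{min}(G)\le q_{min}(G-v)+1$. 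The alternative is equally correct: the principal submatrix of $Q(G)$ obtained by deleting row and column $v$ is exactly $Q(G-v)+\operatorname{diag}(a)$ with $a$ the neighbour-indicator of $v$ (the degree shift is accounted for properly), and Cauchy interlacing plus Weyl's inequality finish it. The first route buys a completely elementary, self-contained proof; the second packages the same content into standard matrix-theoretic machinery and makes transparent where the additive constant $1$ comes from, namely $\lambda_{max}(\operatorname{diag}(a))\le 1$. The only implicit hypothesis, in both your argument and the theorem itself, is that $G-v$ is nonempty so that $q_{min}(G-v)$ is defined.
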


\noindent Let ${\bf j}$ denote all one's column vector, that is ${\bf j}=(1,1,\ldots, 1)^T$.

\begin{theorem}{\rm\cite{Hagos}}\label{compl-1-l}
	If $\lambda\in Sp_A(G)$, then $-1-\lambda\in Sp_A(\overline{G})$ if and only if ${\bf j}^TY=0$ for some eigenvector $Y$ of $G$ corresponding to the eigenvalue $\lambda$.
\end{theorem}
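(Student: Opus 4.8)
The plan is to work directly with the identity $A(\overline{G}) = J - I - A$, where $J = \mathbf{j}\mathbf{j}^T$ is the all-ones matrix, exploiting that $JY = (\mathbf{j}^T Y)\mathbf{j}$ collapses to $0$ precisely when $Y$ is orthogonal to $\mathbf{j}$. Throughout I would write $A = A(G)$ and $\overline{A} = A(\overline{G})$, and use that both are real symmetric.

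For the forward (``if'') implication I would take $Y\neq 0$ with $AY=\lambda Y$ and $\mathbf{j}^T Y=0$ and simply compute
$$\overline{A}\,Y = (J-I-A)Y = (\mathbf{j}^T Y)\mathbf{j} - Y - \lambda Y = -(1+\lambda)Y,$$
using $\mathbf{j}^T Y = 0$ to kill the first term. Thus $Y$ is an eigenvector of $\overline{G}$ for $-1-\lambda$, so $-1-\lambda\in Sp_A(\overline{G})$. This half is purely formal.

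For the converse (``only if'') I would start from an eigenvector $Z\neq 0$ with $\overline{A}Z = -(1+\lambda)Z$, i.e.\ $(J-I-A)Z = -(1+\lambda)Z$, and rearrange to the key relation $(A-\lambda I)Z = (\mathbf{j}^T Z)\,\mathbf{j}$. I would then split on the scalar $c := \mathbf{j}^T Z$. If $c=0$, then $Z$ is itself an eigenvector of $G$ for $\lambda$ with $\mathbf{j}^T Z=0$, and we are finished. If $c\neq 0$, then $\mathbf{j} = c^{-1}(A-\lambda I)Z$ lies in $\mathrm{Range}(A-\lambda I)$; since $A$ is symmetric, $\mathrm{Range}(A-\lambda I) = \ker(A-\lambda I)^{\perp} = V_\lambda^{\perp}$, where $V_\lambda$ is the $\lambda$-eigenspace. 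As $\lambda\in Sp_A(G)$ the space $V_\lambda$ is nonzero, so any nonzero $Y\in V_\lambda$ satisfies $\mathbf{j}^T Y = 0$, which is the required eigenvector.

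The only genuinely non-formal step is the case $c\neq 0$ in the converse: the crux is recognizing that $\mathbf{j}\in\mathrm{Range}(A-\lambda I)$ is equivalent, by the symmetry of $A$, to $\mathbf{j}\perp V_\lambda$, and that this orthogonality is exactly what hands back an eigenvector for $\lambda$ annihilated by $\mathbf{j}^T$. Everything else reduces to direct manipulation of $A(\overline{G}) = J - I - A$.
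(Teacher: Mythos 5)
Your proof is correct in both directions: the computation $\overline{A}Y=(J-I-A)Y=-(1+\lambda)Y$ when $\mathbf{j}^TY=0$ settles the ``if'' half, and in the converse the rearrangement $(A-\lambda I)Z=(\mathbf{j}^TZ)\,\mathbf{j}$ together with the case split on $c=\mathbf{j}^TZ$ is sound; in particular the step $\mathrm{Range}(A-\lambda I)=\ker(A-\lambda I)^{\perp}=V_\lambda^{\perp}$ is exactly right for a real symmetric $A$, and you correctly invoke the hypothesis $\lambda\in Sp_A(G)$ to guarantee $V_\lambda\neq\{0\}$ so that a nonzero $Y\in V_\lambda$ with $\mathbf{j}^TY=0$ actually exists.

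One point of comparison: the paper itself gives no proof of this statement --- it is quoted as Theorem \ref{compl-1-l} directly from the cited reference \cite{Hagos-103-2002} --- so there is no in-paper argument to measure yours against. Your argument is a self-contained, purely linear-algebraic proof from the identity $A(\overline{G})=J-I-A$, which is arguably the most elementary route available; the original source develops the result in the context of main eigenvalues and walk-generating machinery, whereas you need nothing beyond the fundamental theorem of linear algebra for symmetric matrices. Note also that your converse proves something slightly stronger than stated: when $c\neq 0$ you show $\mathbf{j}\perp V_\lambda$, i.e.\ \emph{every} eigenvector for $\lambda$ is annihilated by $\mathbf{j}^T$, not merely some eigenvector.
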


\begin{proposition}{\rm\cite{Cvetk_book}}\label{eig_space-2}
	In the line graph $\mathcal{L}(G)$ of a graph $G$ the eigenspace of the eigenvalue $-2$ is orthogonal to the vector ${\bf j}$.
\end{proposition}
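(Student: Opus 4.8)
The plan is to pass to the vertex--edge incidence matrix of $G$. Let $B$ be the $n\times m$ incidence matrix of $G$, whose $(i,j)$-th entry equals $1$ when $v_i$ is an endpoint of $e_j$ and $0$ otherwise. The classical relation tying the line graph to this matrix is
\begin{equation*}
B^{T}B = A(\mathcal{L}(G)) + 2I_m,
\end{equation*}
because two distinct edges are adjacent in $\mathcal{L}(G)$ exactly when the dot product of the corresponding columns of $B$ is $1$, while each column of $B$ has squared norm $2$. I would open the argument by recording this identity, noting that the vertices of $\mathcal{L}(G)$ are indexed by $E(G)$, so vectors on which $A(\mathcal{L}(G))$ acts live in the $m$-dimensional edge space.

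Next, let $Y$ be any eigenvector of $\mathcal{L}(G)$ associated with the eigenvalue $-2$, so that $A(\mathcal{L}(G))Y=-2Y$. Substituting into the identity gives $B^{T}BY = \big(A(\mathcal{L}(G))+2I_m\big)Y = 0$. Taking the inner product with $Y$ yields $\lVert BY\rVert^{2}=Y^{T}B^{T}BY=0$, whence $BY=0$; that is, the entire $(-2)$-eigenspace is contained in $\ker B$. This step reidentifies the $-2$ eigenvectors as the relations annihilated by the incidence map.

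Finally, I would invoke the single combinatorial fact that every edge of $G$ has exactly two endpoints, which translates into the identity ${\bf j}_n^{T}B = 2\,{\bf j}_m^{T}$, where ${\bf j}_n$ and ${\bf j}_m$ denote the all-ones column vectors in the vertex space and the edge space respectively (each column sum of $B$ is $2$). Multiplying $BY=0$ on the left by ${\bf j}_n^{T}$ gives $0={\bf j}_n^{T}BY = 2\,{\bf j}_m^{T}Y$, so ${\bf j}_m^{T}Y=0$. Since every $-2$ eigenvector $Y$ is orthogonal to the all-ones vector ${\bf j}$ of the edge space, the $(-2)$-eigenspace is orthogonal to ${\bf j}$, which is exactly the assertion.

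There is essentially no hard step here; the only points demanding a little care are the equality $\ker(B^{T}B)=\ker B$, which is justified by the norm computation above and is valid because we work over $\mathbb{R}$, and keeping the two all-ones vectors ${\bf j}_n$ and ${\bf j}_m$ in their correct spaces, since the ${\bf j}$ in the statement refers to the edge space of $G$, equivalently the vertex space of $\mathcal{L}(G)$.
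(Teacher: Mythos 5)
Your proof is correct: the identity $B^{T}B = A(\mathcal{L}(G)) + 2I_m$, the deduction that the $(-2)$-eigenspace lies in $\ker B$, and the column-sum observation ${\bf j}_n^{T}B = 2\,{\bf j}_m^{T}$ are all valid for the simple graphs considered here. Note that the paper itself gives no proof of this proposition --- it is quoted from Cvetkovi\'{c}--Doob--Sachs --- and your incidence-matrix argument is precisely the standard one from that reference, so there is nothing to reconcile.
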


\noindent The Weyl's  eigenvalue inequality \cite{Horn} $\lambda_j(M_1)+\lambda_k(M_2)\le \lambda_i(M_1+M_2)$, $j+k-n\ge i$ for sum of two Hermitian matrices $ M_1$ and $M_2$ of order $n$ gives the following useful eigenvalues inequality on a graph $G$ of order $n$ and its complement $\overline{G}$ \cite{Mojallal}:
\begin{equation}\label{eig_inq}
	\,\lambda_j+\overline{\lambda}_{n-j+2}\le -1 \text{ for } j\in \{2, 3, \ldots , n\}.
\end{equation}

\begin{proposition}{\rm\cite{GodsilCD}}\label{prop_minor}
	Let $M$ be any square matrix of order $n$ with the characteristic polynomial $\varphi(M;x)=\sum\limits_{r=0}^{n}(-1)^nm_rx^{n-r}$. Then $m_r$ is equal to the sum of the principal minors of $M$ of order $r$.  
\end{proposition}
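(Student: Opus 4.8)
The plan is to compute the characteristic polynomial $\varphi(M;x)=\det(xI-M)$ directly by exploiting the multilinearity of the determinant in the columns of $xI-M$. Writing the $j$-th column of $xI-M$ as $x\mathbf{e}_j-\mathbf{m}_j$, where $\mathbf{e}_j$ is the $j$-th standard basis vector and $\mathbf{m}_j$ is the $j$-th column of $M$, I would expand the determinant across all $n$ columns. Each column contributes either its $x\mathbf{e}_j$ part or its $-\mathbf{m}_j$ part, so the determinant splits into $2^n$ terms indexed by the subset $S\subseteq\{1,\dots,n\}$ of columns from which the $x\mathbf{e}_j$ part is taken.

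For a fixed $S$, the corresponding term equals $(-1)^{n-|S|}x^{|S|}\det N_S$, where $N_S$ is the matrix whose $j$-th column is $\mathbf{e}_j$ for $j\in S$ and $\mathbf{m}_j$ for $j\notin S$. The next step is the crucial reduction: in the permutation expansion of $\det N_S$ every contributing permutation must fix each index in $S$, since the only nonzero entry of column $\mathbf{e}_j$ sits on the diagonal, and the sign of such a permutation equals the sign of its restriction to the complement $S^{c}$. Hence $\det N_S=\det M[S^{c}]$, the principal minor of $M$ on the rows and columns indexed by $S^{c}$. Collecting terms by the degree $n-r=|S|$, equivalently by $r=|S^{c}|$, gives
\begin{equation*}
\varphi(M;x)=\sum_{r=0}^{n}(-1)^{r}\Big(\sum_{|T|=r}\det M[T]\Big)x^{\,n-r},
\end{equation*}
so that $m_r=\sum_{|T|=r}\det M[T]$ is exactly the sum of the principal minors of order $r$, as claimed; in particular the coefficient of $x^{\,n-r}$ carries the sign $(-1)^{r}$.

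The main obstacle is the careful sign and index bookkeeping in the reduction $\det N_S=\det M[S^{c}]$: one must verify that expanding along the unit-vector columns leaves precisely the complementary principal submatrix and that the signs combine correctly, which is where a clean Laplace-expansion argument is needed. As an alternative I could pass to the splitting field, factor $\varphi(M;x)=\prod_i(x-\lambda_i)$, identify $m_r$ with the elementary symmetric polynomial $e_r(\lambda_1,\dots,\lambda_n)$, and then invoke the identity expressing $e_r$ of the eigenvalues as the trace of the $r$-th exterior power $\Lambda^r M$, namely the sum of $r\times r$ principal minors; but this requires the exterior-power machinery, whereas the multilinearity route is both more elementary and self-contained.
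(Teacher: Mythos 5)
Your proof is correct. Note first that the paper itself offers no proof of this proposition; it is quoted verbatim from Godsil's book as a known tool, so there is no internal argument to compare against. Your route is the standard textbook one: expanding $\det(xI-M)$ by multilinearity over the columns into $2^{n}$ terms indexed by $S$, reducing $\det N_{S}=\det M[S^{c}]$ via the permutation expansion (the unit-vector columns force $\sigma$ to fix $S$ pointwise, so only the complementary principal submatrix survives, with the sign intact), and collecting terms by $r=|S^{c}|$; all three steps are sound, including the sign bookkeeping $(-1)^{n-|S|}=(-1)^{r}$. One point worth flagging: your computation shows the coefficient of $x^{n-r}$ is $(-1)^{r}m_{r}$, which exposes a typo in the paper's statement, where the sign is printed as $(-1)^{n}$ instead of $(-1)^{r}$; as written there, every coefficient would carry the same sign, which is false for general $M$. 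Your version is the correct one, and it is also the form actually used later in the paper (in Theorem \ref{eqlquospec}, only equality of the sums of principal minors matters, so the typo is harmless there).
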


\begin{definition}
	If $G$ is a graph of order $n$ with vertices $v_1, v_2, \ldots, v_n$, then the graph $G[H_1, H_2, \ldots, H_n]$ called {\bf generalized composition} or {\bf $H$-join} \cite{Schwenk, Cardoso_Spec} which is obtained from  the graphs $H_1, H_2,\ldots, H_n$ by joining every vertex of $H_i$ to every vertex of $H_j$ if $v_i$ is adjacent to $v_j$ in $G$.
\end{definition}

\noindent Let $\pi=(\pi_1,\pi_2,\ldots,\pi_p)$ be a partition of a vertex set of a graph $G$. The partition $\pi$ is called {\bf equitable partition} \cite{ChangAn,Schwenk} if for each $i,j=1,2,\ldots,p$ there exists a number $c_{ij}$  such that for every vertex $v\in\pi_i$  there are exactly $c_{ij}$ edges between $v$ and the vertices in $\pi_j$. If $\pi$ is an equitable partition then the associated $p\times p$ matrix  with rows and columns corresponding to the partite sets  $\pi_1,\pi_2,\ldots,\pi_p$ is called {\bf quotient matrix}. Let $A_{\pi}$  is the $p\times p$ matrix with $(i, j)$-th element $a^{\pi}_{ij}$ equal to $c_{ij}$ and the $p\times p$ diagonal matrix $D_{\pi}$ whose $i$-th diagonal element equal to $\sum\limits_{k=1}^{p}c_{ik}$. If $\pi$ is an equitable partition, we denote the quotient matrix for adjacency, Laplacian and signless Laplacian of a graph, respectively by $A_{\pi},  L_{\pi}$ and $Q_{\pi}$. The matrices $A_{\pi}, L_{\pi}$ and $Q_{\pi}$ are given by $A_{\pi}=[a^{\pi}_{ij}]$ \cite{Schwenk}, $L_{\pi}=[l^{\pi}_{ij}]=D_{\pi}-A_{\pi}$ \cite{Cardoso} and $Q_{\pi}=[q^{\pi}_{ij}]=D_{\pi}+A_{\pi}$ \cite{ChangAn,Wang_SL}. It is noted that these  need not be symmetric matrices.\\

Let $H_i$ be $r_i$-regular graphs of order $n_i$ for $i=1,2,\ldots,p$. In case of   $H$-join $G[H_1, H_2,\ldots, H_p]$, where $G$ is a graph of order $p$,  let us denote the quotient matrix for adjacency, Laplacian and signless Laplacian of a graph, respectively by $A^{H}_{\pi}=[a^{H}_{ij}],  L^{H}_{\pi}=[l^{H}_{ij}]$ and $Q^{H}_{\pi}=[q^{H}_{ij}]$. These matrices are defined as \cite{Cardoso_Spec,WuBaoFeng}

\begin{equation*}
	a^{H}_{ij} = \left\{ \begin{array}{ll} 
		c_{ij}        & \text{ if } {i=j}\\[3mm]
		\sqrt{n_in_j}        & \text{ if } {v_iv_j}\in E(G)\\[3mm]
		0 & \text{ otherwise,} \end{array} \right.
\end{equation*}

\begin{equation*}
	l^{H}_{ij} = \left\{ \begin{array}{ll} 
		l^{\pi}_{ij}        & \text{ if } {i=j}\\ [3mm]
		-\sqrt{n_in_j}        & \text{ if } {v_iv_j}\in E(G)\\ [3mm]
		0 & \text{ otherwise} \end{array} \right.
\end{equation*}
and
\begin{equation*}
	q^{H}_{ij}= \left\{ \begin{array}{ll} 
		q^{\pi}_{ij}        & \text{ if } {i=j}\\ [3mm]
		\sqrt{n_in_j}        & \text{ if } {v_iv_j}\in E(G)\\[3mm]
		0 & \text{ otherwise.} \end{array} \right.
\end{equation*}

\noindent It is noted that these  are symmetric matrices.\\

If $\pi=(\pi_1,\pi_2,\ldots,\pi_p)$ is an equitable partition of a graph $G$ with cardinality of $\pi_i$, $\lvert\pi_i\rvert = m_i$ for $i=1, \ldots, p$ then $\pi$ also equitable partition to its complement $\overline{G}$. The quotient matrix $\overline{A}_{\pi}$ of $\overline{G}$ is given by $\overline{A}_{\pi}={\bf J}_{\pi}-I-A_{\pi}$ \cite{Teranishi}, where ${\bf J}_{\pi}$ is the matrix of order $p$ whose $(i,j)$-th element equal to $m_j$ and $I$ is the identity matrix of order $p$.
\begin{proposition}{\rm\cite{Teranishi}}\label{Cospec_Prop}
	Let the graphs $G_1$ and $G_2$ are co-spectral with equitable partitions $\pi 1$ and $\pi 2$ respectively. If $A_{\pi 1} = A_{\pi 2}$, then the graphs $\overline{G_1}$ and $\overline{G_2}$ are co-spectral.
\end{proposition}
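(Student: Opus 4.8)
The plan is to use the equitable partition to split the ambient space $\mathbb{R}^{n}$ into two $A$-invariant subspaces and to read off the spectra of $A$ and of $\overline{A}$ from this splitting; the decisive point is that the complement acts on the second summand in an almost trivial way, so its spectrum is governed separately by the quotient matrix and by the ``cell-sum-zero'' part of the spectrum of $A$.

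For a graph $G$ on $n$ vertices with equitable partition $\pi=(\pi_1,\ldots,\pi_p)$, let $S$ be the $n\times p$ characteristic matrix whose $j$-th column is the indicator vector of $\pi_j$, let $U=\mathrm{col}(S)$ be the space of cell-constant vectors and $W=\{y:S^{T}y=0\}$ the space of vectors summing to zero on every cell, so that $\mathbb{R}^{n}=U\oplus W$ with $\dim U=p$ and $\dim W=n-p$. Equitability is exactly the identity $AS=SA_{\pi}$, which shows that $U$ and $W$ are both $A$-invariant and that $A|_{U}$ is represented by $A_{\pi}$; hence $Sp_A(G)=Sp_A(A_{\pi})\uplus Sp_A(A|_{W})$ as multisets. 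I would then observe that for $y\in W$ one has $Jy={\bf j}({\bf j}^{T}y)=0$, where $J$ is the all-ones matrix of order $n$, because the total sum of $y$ vanishes. Consequently $W$ is $\overline{A}$-invariant and $\overline{A}|_{W}=(J-I-A)|_{W}=-(I+A)|_{W}$, so each eigenvalue $\mu$ of $A|_{W}$ contributes the eigenvalue $-1-\mu$ to $\overline{A}$. Since $\overline{A}|_{U}$ is represented by $\overline{A}_{\pi}={\bf J}_{\pi}-I-A_{\pi}$, this yields the companion decomposition
\begin{equation*}
Sp_A(\overline{G})=Sp_A(\overline{A}_{\pi})\uplus\{-1-\mu:\mu\in Sp_A(A|_{W})\}.
\end{equation*}

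Applying the two displays to $G_{1}$ and $G_{2}$: co-spectrality gives $Sp_A(G_{1})=Sp_A(G_{2})$, while the hypothesis $A_{^1\pi}=A_{^2\pi}$ gives $Sp_A(A_{^1\pi})=Sp_A(A_{^2\pi})$, so cancelling the quotient parts as multisets forces $Sp_A(A|_{W_{1}})=Sp_A(A|_{W_{2}})$ (the dimensions $n-p$ already agree, since $n$ matches by co-spectrality and $p$ matches because $A_{^1\pi}=A_{^2\pi}$ are equal matrices). Hence the $W$-parts of $Sp_A(\overline{G_{1}})$ and $Sp_A(\overline{G_{2}})$ coincide, and the co-spectrality of the complements reduces to the single assertion $Sp_A(\overline{A}_{^1\pi})=Sp_A(\overline{A}_{^2\pi})$.

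This last reduction is the main obstacle. Writing $A_{\pi}:=A_{^1\pi}=A_{^2\pi}$, the two complement quotients differ only by $\overline{A}_{^1\pi}-\overline{A}_{^2\pi}={\bf J}_{^1\pi}-{\bf J}_{^2\pi}$, so it suffices to show that $^1\pi$ and $^2\pi$ share the same cardinality vector $(m_{1},\ldots,m_{p})$, i.e. ${\bf J}_{^1\pi}={\bf J}_{^2\pi}$. I would recover these sizes from the common quotient data through the edge-balance relations $m_{i}c_{ij}=m_{j}c_{ji}$, which fix all size ratios inside each connected block of the quotient digraph, together with co-spectrality, which fixes the total $\sum_i m_{i}=n$ (and also $\sum_i m_{i}d_{i}=2m$, the common edge count, where $d_i=\sum_j c_{ij}$ is the common row sum of $A_\pi$). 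The only genuinely delicate situation is a quotient that decomposes into several blocks with no edges between them, where the balance relations impose no cross-block constraint; handling that degenerate case — showing the block sizes are still forced, or that the complements remain co-spectral regardless — is where the real work lies.
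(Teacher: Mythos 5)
The paper states this proposition with a citation to Teranishi and gives no proof of its own, so your argument can only be judged on its own terms. Up to the last step it is correct and is the natural argument: the splitting $\mathbb{R}^n=U\oplus W$ into cell-constant vectors and cell-sum-zero vectors, the identities $AS=SA_\pi$, $JS=S{\bf J}_\pi$ and $\overline{A}|_W=-(I+A)|_W$, the two spectral decompositions, and the multiset cancellation reducing everything to $Sp(\overline{A}_{^1\pi})=Sp(\overline{A}_{^2\pi})$ are all sound.

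The gap you flag at the end, however, is not a technicality that can be worked around: neither of your two proposed escape routes can succeed, because the proposition as printed is false when the quotient is disconnected. Take $G_1=3K_1\cup K_2\cup C_6$ and $G_2=K_1\cup 3K_2\cup C_4$. Both have spectrum $\{2,1^3,0^3,(-1)^3,-2\}$, so they are co-spectral; partitioning each graph into the cells (vertices of degree $0$, vertices of degree $1$, vertices of degree $2$) gives equitable partitions with the identical quotient matrix $A_{^1\pi}=A_{^2\pi}=\mathrm{diag}(0,1,2)$, yet the cell-size vectors are $(3,2,6)$ and $(1,6,4)$. Note that your balance relations $m_ic_{ij}=m_jc_{ji}$ are vacuous here and that both of your global constraints hold ($11$ vertices and $7$ edges in each graph), so the sizes are genuinely not forced. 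And the conclusion really fails: $\overline{G_1}$ has $\binom{11}{3}-7\cdot 9+6=108$ triangles while $\overline{G_2}$ has $\binom{11}{3}-7\cdot 9+4=106$, so $\mathrm{tr}(\overline{A}^3)$ differs and the complements are not co-spectral. The statement becomes true (and your reduction finishes it in one line, since then $\overline{A}_{^1\pi}={\bf J}_{^1\pi}-I-A_{^1\pi}={\bf J}_{^2\pi}-I-A_{^2\pi}=\overline{A}_{^2\pi}$) once one adds the hypothesis that the two partitions have the same cell sizes, i.e. ${\bf J}_{^1\pi}={\bf J}_{^2\pi}$; that extra hypothesis is satisfied everywhere the paper actually invokes the proposition (the $H$-join constructions of Remark 4.5, where corresponding cells $H_{s_1}$, $H_{s_2}$ have equal orders), and it is the form in which the result should be quoted.
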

\begin{theorem}{\rm\cite{WuBaoFeng}}\label{SpecQ}
	Let $G$ be a graph of order $n$ and $H_i$ be $r_i$-regular graph of order $n_i$ for $i=1, 2, \ldots, n$. If ${\Gamma}=G[H_1, H_2,\ldots, H_n]$, then $$Sp_Q(\Gamma)=\left(\cup_{i=1}^n\Big((q^{\pi}_{ii}-2r_i)+(Sp_Q(H_i)\setminus\{2r_i\})\Big)\right)\cup \Big(Sp(Q^{H}_{\pi})\Big).$$
\end{theorem}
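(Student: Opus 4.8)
The plan is to diagonalize $Q(\mathsf{G})$ by exploiting the block structure imposed by the partition $\pi=(V(H_1),\ldots,V(H_n))$ together with the regularity of each $H_i$. Write $N=\sum_{i=1}^{n}n_i$ for the order of $\mathsf{G}$ and $N_i=\sum_{j:\,v_iv_j\in E(G)}n_j$. Since every vertex of $H_i$ has exactly $r_i$ neighbours inside its own block and exactly $N_i$ neighbours in the other blocks, each such vertex has degree $r_i+N_i$; hence, in block form, $Q(\mathsf{G})$ has $i$-th diagonal block $(r_i+N_i)I_{n_i}+A(H_i)=N_iI_{n_i}+Q(H_i)$, while its $(i,j)$ off-diagonal block equals $J_{n_i\times n_j}=\mathbf{j}_{n_i}\mathbf{j}_{n_j}^T$ when $v_iv_j\in E(G)$ and $0$ otherwise. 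A direct computation of $q^{\pi}_{ii}=(D_\pi+A_\pi)_{ii}$ gives $q^{\pi}_{ii}=2r_i+N_i$, so $N_i=q^{\pi}_{ii}-2r_i$.

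First I would decompose $\mathbb{R}^{N}=W\oplus W^{\perp}$, where $W$ is the $n$-dimensional space of vectors that are constant on each block, spanned by the orthonormal vectors $\hat u_i$ obtained by placing $\mathbf{j}_{n_i}/\sqrt{n_i}$ on block $i$ and $0$ elsewhere, and $W^{\perp}=\bigoplus_i\{z:\mathbf{j}_{n_i}^{T}z=0\}$. Both subspaces are $Q(\mathsf{G})$-invariant, and this is exactly where the regularity of the $H_i$ is essential: for $z$ supported on block $i$ with $\mathbf{j}_{n_i}^{T}z=0$, the off-diagonal block sends $z$ to $\mathbf{j}_{n_j}(\mathbf{j}_{n_i}^{T}z)=0$, so $Q(\mathsf{G})z$ remains supported on block $i$; moreover $\mathbf{j}_{n_i}$ is an eigenvector of $A(H_i)$, hence of $Q(H_i)$, so $(\mathbf{j}_{n_i})^{\perp}$ is $Q(H_i)$-invariant. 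Consequently the restriction of $Q(\mathsf{G})$ to the block-$i$ summand of $W^{\perp}$ is $N_iI+Q(H_i)$ acting on $(\mathbf{j}_{n_i})^{\perp}$, whose eigenvalues are $N_i+q=(q^{\pi}_{ii}-2r_i)+q$ for each $q\in Sp_Q(H_i)\setminus\{2r_i\}$ (exactly one copy of the Perron value $2r_i$, the one coming from $\mathbf{j}_{n_i}$, is removed). This produces the first $\sum_i(n_i-1)$ eigenvalues of the asserted multiset.

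Next I would compute the restriction of $Q(\mathsf{G})$ to $W$ in the orthonormal basis $\{\hat u_i\}$. Using $Q(H_i)\mathbf{j}_{n_i}=2r_i\mathbf{j}_{n_i}$ one finds that $Q(\mathsf{G})\hat u_j$ equals $q^{\pi}_{jj}\hat u_j$ on block $j$ and $\sqrt{n_in_j}\,\hat u_i$ on each block $i$ with $v_iv_j\in E(G)$; hence $\langle\hat u_i,Q(\mathsf{G})\hat u_j\rangle$ equals $q^{\pi}_{ii}$ when $i=j$, equals $\sqrt{n_in_j}$ when $v_iv_j\in E(G)$, and is $0$ otherwise. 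This matrix is precisely the symmetric quotient matrix $Q^{H}_{\pi}$, so the restriction to $W$ contributes the remaining $n$ eigenvalues $Sp(Q^{H}_{\pi})$. (Equivalently, the block-constant eigenvectors satisfy $Q_\pi y=\lambda y$ for the nonsymmetric quotient $Q_\pi$, and $Q^{H}_{\pi}=S Q_\pi S^{-1}$ with $S=\mathrm{diag}(\sqrt{n_1},\ldots,\sqrt{n_n})$ shows $Sp(Q_\pi)=Sp(Q^{H}_{\pi})$.) Since $W$ and $W^{\perp}$ are complementary invariant subspaces of total dimension $n+\sum_i(n_i-1)=N$, the spectrum of $Q(\mathsf{G})$ is the multiset union of the two contributions, which is the claimed identity.

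I expect the main obstacle to be the bookkeeping that pins down these two contributions: verifying that the vectors in $(\mathbf{j}_{n_i})^{\perp}$ are annihilated by the complete-bipartite off-diagonal blocks, so that $W^{\perp}$ splits as a direct sum of block-local invariant pieces, and then correctly identifying the orthonormal-basis matrix of the restriction to $W$ with the symmetric $Q^{H}_{\pi}$ rather than with the nonsymmetric $Q_\pi$. Both steps rely on the regularity of the $H_i$; without it neither the degrees nor $\mathbf{j}_{n_i}$ would behave uniformly, and the partition $\pi$ would fail to be equitable.
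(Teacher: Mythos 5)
Your proof is correct, but note that the paper itself gives no proof of this statement: Theorem \ref{SpecQ} is imported verbatim from \cite{Wu-Lou-1-2014}, so the only comparison available is with that source, whose argument is essentially the one you give. Your invariant-subspace decomposition is the standard route: the block-constant space $W$ and its orthogonal complement are both $Q(\mathsf{G})$-invariant precisely because each $H_i$ is regular; the complement splits into block-local pieces on which $Q(\mathsf{G})$ acts as $N_iI+Q(H_i)$ restricted to $\mathbf{j}_{n_i}^{\perp}$, yielding $(q^{\pi}_{ii}-2r_i)+\bigl(Sp_Q(H_i)\setminus\{2r_i\}\bigr)$ with exactly one copy of $2r_i$ deleted (your phrasing handles correctly even the case where $H_i$ is disconnected and $2r_i$ has multiplicity greater than one, since only the $\mathbf{j}_{n_i}$-direction is split off); and the restriction to $W$ in the orthonormal basis $\{\hat u_i\}$ is the symmetric quotient $Q^{H}_{\pi}$. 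The degree bookkeeping $q^{\pi}_{ii}=2r_i+N_i$ and the dimension count are also right. One further remark worth making explicit: your parenthetical observation that $Q^{H}_{\pi}=SQ_{\pi}S^{-1}$ with $S=\mathrm{diag}(\sqrt{n_1},\ldots,\sqrt{n_n})$ is by itself a two-line proof of the $Q$-case of the paper's Theorem \ref{eqlquospec}, which the paper instead proves by the longer comparison of sums of principal minors via Proposition \ref{prop_minor}; the same diagonal similarity handles $A_{\pi}$ and $L_{\pi}$, so your remark subsumes that theorem entirely.
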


\noindent Suppose that $n\ge 2$ and $M=[m_{ij}]\in \mathbb{C}^{n\times n}$. The Ger\v{s}gorin discs $D_i, i=1,2,\ldots,n$ of the matrix $M$ are defined as the closed
circular regions $D_i=\{z\in\mathbb{C}:\lvert z-m_{ii}\rvert\le R_i\}$ in the complex plane, where
\begin{equation*}
	R_i=
	\sideset{}{}{\sum}_{\substack{ j=1\\ j\ne i }}^{n}\lvert m_{ij}\rvert
\end{equation*} is the radius of $D_i$.
\begin{theorem}[Ger\v{s}gorin]{\rm\cite{Horn}}\label{Gersgorin}
	Let $n\ge 2$ and $M\in \mathbb{C}^{n\times n}$. All eigenvalues of the matrix $M$ lie in the region $D=\cup_{i=1}^{n}D_i$, where $D_i, i=1,2,\ldots,n$ are the Ger\v{s}gorin discs of $M$.
\end{theorem}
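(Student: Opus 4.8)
The plan is to fix an arbitrary eigenvalue $\lambda$ of $M$ and show it lies in at least one Ger\v{s}gorin disc $D_i$; since the union $D=\cup_i D_i$ then automatically contains every eigenvalue, this suffices. First I would take a nonzero eigenvector $x=(x_1,x_2,\ldots,x_n)^T$ satisfying $Mx=\lambda x$, and select an index $i$ at which the coordinates attain their largest modulus, that is, $\lvert x_i\rvert=\max_{1\le j\le n}\lvert x_j\rvert$. Because $x\ne 0$, this maximum is strictly positive, so $\lvert x_i\rvert>0$; this choice of coordinate is the crux of the whole argument.

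Next I would write out the $i$-th coordinate of the equation $Mx=\lambda x$, namely $\sum_{j=1}^{n} m_{ij}x_j=\lambda x_i$, and isolate the diagonal term to obtain $(\lambda-m_{ii})x_i=\sum_{j\ne i} m_{ij}x_j$. Taking absolute values and applying the triangle inequality followed by the maximality $\lvert x_j\rvert\le\lvert x_i\rvert$ gives
\begin{equation*}
\lvert\lambda-m_{ii}\rvert\,\lvert x_i\rvert=\Big\lvert\sum_{j\ne i} m_{ij}x_j\Big\rvert\le\sum_{j\ne i}\lvert m_{ij}\rvert\,\lvert x_j\rvert\le\lvert x_i\rvert\sum_{j\ne i}\lvert m_{ij}\rvert=\lvert x_i\rvert\,R_i.
\end{equation*}
Dividing through by the positive quantity $\lvert x_i\rvert$ then yields $\lvert\lambda-m_{ii}\rvert\le R_i$, which is exactly the statement $\lambda\in D_i\subseteq D$. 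As $\lambda$ was an arbitrary eigenvalue, all eigenvalues of $M$ lie in $D$, completing the argument.

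I do not expect any genuine obstacle here: this is the classical Ger\v{s}gorin estimate, and once the index of maximal modulus is fixed, every remaining step is the triangle inequality together with a single division by a positive number. The only point demanding a moment's attention is precisely that initial selection of the coordinate $i$ with $\lvert x_i\rvert$ largest, since it is this choice that licenses the bound $\lvert x_j\rvert\le\lvert x_i\rvert$ and thereby the factor $R_i$; without it the estimate would not close.
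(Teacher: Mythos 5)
Your proof is correct and complete: it is the classical Ger\v{s}gorin argument (choose the coordinate of maximal modulus in an eigenvector, apply the triangle inequality in that row, and divide by $\lvert x_i\rvert>0$), which is exactly the proof given in the cited reference of Horn and Johnson. The paper itself states this theorem without proof, importing it from that reference, so there is nothing further to compare; your argument matches the standard one.
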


\section{Spectra of Quotient Matrices}
\begin{theorem}\label{eqlquospec}
	Let $\Gamma = G[H_1, H_2,\ldots, H_n]$, where $H_i$ is an $r_i$-regular graph of order $n_i$ for $i=1, 2, \ldots, n$. Then the spectrum of the quotient matrices  $ A_{\pi},  L_{\pi}$ and $Q_{\pi}$ equal to the spectrum of the quotient matrices $A^{H}_{\pi},  L^{H}_{\pi}$ and $Q^{H}_{\pi}$ respectively.
\end{theorem}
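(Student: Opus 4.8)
The plan is to exhibit a single diagonal similarity that simultaneously conjugates each non-symmetric quotient matrix into its symmetric $H$-join counterpart. First I would record the structure constants of the equitable partition $\pi=(V(H_1),\ldots,V(H_n))$ of $\mathbf{G}$. Since each $H_i$ is $r_i$-regular and, whenever $v_iv_j\in E(G)$, every vertex of $H_i$ is joined to all $n_j$ vertices of $H_j$, the numbers $c_{ij}$ are $c_{ii}=r_i$, $c_{ij}=n_j$ for $v_iv_j\in E(G)$, and $c_{ij}=0$ otherwise. Hence the off-diagonal entries of $A_{\pi}$ in the edge positions are $a_{ij}^{\pi}=n_j$, which differ from the symmetric $\sqrt{n_in_j}$ of $A_{\pi}^{H}$, while the diagonal entries $c_{ii}=r_i$ already agree; the same holds for $L_{\pi}$ versus $L_{\pi}^{H}$ and $Q_{\pi}$ versus $Q_{\pi}^{H}$, whose diagonals $l_{ii}^{\pi}=\sum_k c_{ik}-r_i$ and $q_{ii}^{\pi}=\sum_k c_{ik}+r_i$ are defined to coincide with those of $L_{\pi}$ and $Q_{\pi}$ by construction.

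The key step is to set $D=\mathrm{diag}(\sqrt{n_1},\sqrt{n_2},\ldots,\sqrt{n_n})$, which is invertible since every $n_i\ge 1$, and to verify entrywise that $DA_{\pi}D^{-1}=A_{\pi}^{H}$. Indeed $(DA_{\pi}D^{-1})_{ij}=\sqrt{n_i/n_j}\,a_{ij}^{\pi}$; on the diagonal this leaves $r_i$ unchanged, and in an edge position it sends $n_j$ to $\sqrt{n_i/n_j}\cdot n_j=\sqrt{n_in_j}$, which is exactly $a_{ij}^{H}$. Because $D$ is diagonal it commutes with $D_{\pi}$, so the same conjugation fixes $D_{\pi}$ and therefore carries $Q_{\pi}=D_{\pi}+A_{\pi}$ to $D_{\pi}+A_{\pi}^{H}=Q_{\pi}^{H}$ and $L_{\pi}=D_{\pi}-A_{\pi}$ to $D_{\pi}-A_{\pi}^{H}=L_{\pi}^{H}$; here I would note that the edge-position signs ($+\sqrt{n_in_j}$ for $Q_{\pi}^{H}$, $-\sqrt{n_in_j}$ for $L_{\pi}^{H}$) are produced automatically from the signs of $a_{ij}^{\pi}$. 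Since similar matrices share their spectra, this yields $Sp(A_{\pi})=Sp(A_{\pi}^{H})$, $Sp(L_{\pi})=Sp(L_{\pi}^{H})$, and $Sp(Q_{\pi})=Sp(Q_{\pi}^{H})$, which is the assertion.

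I do not anticipate a genuine obstacle; the only point requiring care is the bookkeeping of the off-diagonal scaling, namely that conjugation by $D$ multiplies the $(i,j)$ entry by $\sqrt{n_i/n_j}$ rather than $\sqrt{n_j/n_i}$, so that it is $n_j$ (and not $n_i$) that is rebalanced into the symmetric mean $\sqrt{n_in_j}$. One should also observe that the transformation is consistent with symmetry of the target: since $G$ is undirected, both $(i,j)$ and $(j,i)$ are edge positions, and the conjugation rescales $a_{ij}^{\pi}=n_j$ and $a_{ji}^{\pi}=n_i$ to the common value $\sqrt{n_in_j}$, confirming that $A_{\pi}^{H},L_{\pi}^{H},Q_{\pi}^{H}$ are indeed symmetric and that the similarity is the correct bridge between the two families.
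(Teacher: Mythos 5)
Your proof is correct, and it takes a genuinely different route from the paper. The paper proves the theorem by comparing characteristic polynomials: invoking the fact that the coefficients of $\varphi(M;x)$ are sums of principal minors (Proposition \ref{prop_minor}), it expands each principal minor of $Q_{\pi}$ and of $Q^{H}_{\pi}$ over permutations and checks term-by-term equality, using that $\prod_k n_{\sigma(k)}=\prod_k n_k$ when $\sigma$ permutes the chosen index set and treating fixed points (diagonal entries) separately; the same bookkeeping is then asserted for $A_{\pi}$ and $L_{\pi}$. You instead exhibit the explicit diagonal similarity $D=\mathrm{diag}(\sqrt{n_1},\ldots,\sqrt{n_n})$ with $DA_{\pi}D^{-1}=A^{H}_{\pi}$, and since $D$ commutes with $D_{\pi}$ the same conjugation carries $L_{\pi}=D_{\pi}-A_{\pi}$ and $Q_{\pi}=D_{\pi}+A_{\pi}$ to $L^{H}_{\pi}$ and $Q^{H}_{\pi}$ (your identification of the structure constants $c_{ii}=r_i$, $c_{ij}=n_j$ on edges, and of the diagonals of $L^{H}_{\pi},Q^{H}_{\pi}$ with those of $L_{\pi},Q_{\pi}$, is exactly what the definitions give). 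Your argument is shorter, avoids the combinatorics of permutation expansions and fixed-point cases, and yields a strictly stronger conclusion: the matrix pairs are similar, not merely cospectral, so eigenvectors correspond under $D$, Jordan structure is preserved, and the symmetry of $A^{H}_{\pi}$ immediately explains why the non-symmetric quotient matrices have real spectra. The paper's minor-based computation, for its part, shows that \emph{every} corresponding pair of principal minors agrees (slightly more than equality of characteristic polynomials requires) and needs no inspiration beyond expanding determinants, but the diagonal-similarity trick is the cleaner and more informative bridge between the two families of quotient matrices.
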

\begin{proof}
	It is clear that $\pi = (V(H_1), V(H_2), \ldots, V(H_n))$ is an equitable partition of $\Gamma$. Let us first prove the result for the matrices $Q_{\pi}$ and $Q^{H}_{\pi}$ using Proposition \ref{prop_minor} and similar can be applied for the remaining matrices. The entries of the matrices $Q_{\pi}$ and $Q^{H}_{\pi}$ can be written as $q^{\pi}_{ij}=a_{ij}n_j$,  $q^{H}_{ij}=a_{ij}\sqrt{n_in_j}$ for $i\neq j$ and $q^{\pi}_{ii}=q^{H}_{ii}$, where $a_{ij}$ is the entry of the adjacency matrix of $G$. Let $S_n$ be the set of all permutations over the set $\{1, 2, \ldots, n\}$ and if $\sigma\in S_n$ denote its sign by $sgn(\sigma)$. Let $M_r^{'}$ be the principal minor of order $r$ which is obtained by deleting any $n-r$ rows and corresponding columns of the matrix $Q_{\pi}$ and the respective principal minor of the matrix $Q^{H}_{\pi}$ be $M_r^{''}$ for $1\leq r\leq n$. Then  $M_r^{'}=\sum\limits_{\sigma\in S_r}sgn(\sigma)\prod\limits_{k=1}^{r}q^{\pi}_{k\sigma(k)}=\sum\limits_{\sigma\in S_r}sgn(\sigma)\prod\limits_{k=1}^{r}a_{k\sigma(k)}n_{\sigma(k)}=\sum\limits_{\sigma\in S_r}sgn(\sigma)\prod\limits_{k=1}^{r}a_{k\sigma(k)}n_k$ and $M_r^{''}=\sum\limits_{\sigma\in S_r}sgn(\sigma)\prod\limits_{k=1}^{r}q^{H}_{k\sigma(k)}=\sum\limits_{\sigma\in S_r}sgn(\sigma)\prod\limits_{k=1}^{r}a_{k\sigma(k)}\sqrt{n_kn_{\sigma(k)}}=\sum\limits_{\sigma\in S_r}sgn(\sigma)\prod\limits_{k=1}^{r}a_{k\sigma(k)}n_k$ if $k\neq\sigma(k)$ for any $k=1,2,\ldots, r$. If $k=\sigma(k)$ for $k=1,2,\ldots, p$ and $1\leq  p\leq r$ then $M_r^{'}=M_r^{''}=\sum\limits_{\sigma\in S_r}sgn(\sigma)\prod\limits_{k=1}^{r-p}a_{k\sigma(k)}n_k(q^{\pi}_{kk})^p$. Which shows that $M_r^{'}=M_r^{''}$ to each $r$ for $1\le r\le n$. Hence, the sum of the principal minors of  order $r$ of the matrices $Q_{\pi}$ and $Q^{H}_{\pi}$ are equal. It shows that the matrices $Q_{\pi}$ and $Q^{H}_{\pi}$ have the same characteristic polynomials by Proposition \ref{prop_minor}. Thus the matrices $Q_{\pi}$ and $Q^{H}_{\pi}$ have the same spectrum.
\end{proof}
Now Theorem \ref{SpecQ}, with the help of above theorem can be stated in the following way:
\begin{proposition}\label{HSpecQeql}
	Let $G$ be a graph of order $n$ and $H_i$ be an $r_i$-regular graph of order $n_i$ for $i=1, 2, \ldots, n$. If $\Gamma=G[H_1, H_2$,\ldots, $H_n]$, then $$Sp_Q(\Gamma)=\left(\bigcup_{i=1}^n\Big(R_i+(Sp_Q(H_i)\setminus\{2r_i\})\Big)\right)\cup \Big(Sp(Q_{\pi})\Big),$$ where $R_i$ is the $i$-th row sum excluding the diagonal entry of $Q_{\pi}$ for  $i=1, 2, \ldots, n$.
\end{proposition}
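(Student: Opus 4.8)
The plan is to derive the statement directly from Theorem \ref{SpecQ} by performing two substitutions, so almost no new computation is required. Theorem \ref{SpecQ} already supplies
$$Sp_Q(\mathsf{G})=\left(\cup_{i=1}^n\Big((q^{\pi}_{ii}-2r_i)+(Sp_Q(H_i)\setminus\{2r_i\})\Big)\right)\cup \Big(Sp(Q^{H}_{\pi})\Big),$$
and the target formula differs from this in exactly two places: the last factor $Sp(Q^{H}_{\pi})$ must become $Sp(Q_{\pi})$, and the scalar shift $q^{\pi}_{ii}-2r_i$ must be recognized as the off-diagonal row sum $R_i$ of $Q_{\pi}$. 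I would handle these two replacements in turn.

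First I would invoke Theorem \ref{eqlquospec}, which was just proved (via the principal-minor criterion of Proposition \ref{prop_minor}) and asserts that $Q_{\pi}$ and $Q^{H}_{\pi}$ share the same spectrum. This instantly licenses replacing $Sp(Q^{H}_{\pi})$ by $Sp(Q_{\pi})$ in the final factor, disposing of the first substitution with no further argument.

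The substantive step is the identity $q^{\pi}_{ii}-2r_i=R_i$, which I would establish by unwinding the equitable-partition data of $\mathsf{G}$ for $\pi=(V(H_1),\ldots,V(H_n))$. Since $H_i$ is $r_i$-regular, a vertex of $\pi_i$ has exactly $c_{ii}=r_i$ neighbours inside $\pi_i$, while the $H$-join construction forces $c_{ij}=n_j$ when $v_iv_j\in E(G)$ and $c_{ij}=0$ otherwise. Feeding this into $Q_{\pi}=D_{\pi}+A_{\pi}$, with $(D_{\pi})_{ii}=\sum_{k}c_{ik}$ and $a^{\pi}_{ij}=c_{ij}$, I obtain $q^{\pi}_{ii}=(D_{\pi})_{ii}+c_{ii}=2r_i+\sum_{j:\,v_iv_j\in E(G)}n_j$, while for $i\neq j$ the off-diagonal entries are simply $q^{\pi}_{ij}=c_{ij}$. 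Summing the off-diagonal entries gives $R_i=\sum_{j\neq i}q^{\pi}_{ij}=\sum_{j:\,v_iv_j\in E(G)}n_j$, so that $q^{\pi}_{ii}-2r_i=R_i$, completing the second substitution and hence the proof.

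I do not expect a real obstacle, as the proposition is a cosmetic reformulation of Theorem \ref{SpecQ} and both replacements are forced once Theorem \ref{eqlquospec} is available. The only point that demands care is the bookkeeping around the coefficient of $r_i$: the diagonal entry $q^{\pi}_{ii}$ collects one $r_i$ from $D_{\pi}$ and one from $A_{\pi}$, and I must track this so that subtracting $2r_i$ cancels the regularity contribution exactly and leaves precisely the off-diagonal row sum.
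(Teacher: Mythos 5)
Your proof is correct and follows essentially the same route as the paper: the paper's own proof simply cites Theorem \ref{eqlquospec} together with the observation that $q^{\pi}_{ii}=2r_i+R_i$, which are precisely your two substitutions. Your only addition is to spell out the bookkeeping behind that observation (one $r_i$ from $D_{\pi}$, one from $A_{\pi}$, the rest being the off-diagonal row sum), which the paper leaves implicit.
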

\begin{proof}
	Proof follows by  Theorem \ref{eqlquospec} and an observation that $q^{\pi}_{ii} = 2r_i+R_i$ for   $i=1, 2, \ldots, n$.
\end{proof}

\section{Spectra and Energy of iterated line graphs with the property $\rho$}
\begin{theorem}\label{Iter_line-2}
	Let $G$ be a graph with order $n_0$ and size $m_0$, where each edge $e = uv$ in $G$ satisfies $d_u + d_v \ge 6$. Then, the graphs $\mathcal{L}^k(G)$ have the property $\rho$ for $k \ge 2$. Moreover, all the iterated line graphs $\mathcal{L}^k(G)$ of such a graph $G$ are mutually equienergetic, with energy $4(n_k - n_{k-1})$ for $k \ge 2$.
\end{theorem}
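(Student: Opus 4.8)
The plan is to control the adjacency spectrum of each $\mathcal{L}^k(G)$ through its description in \eqref{line_eig_snles_eqn} in terms of the signless Laplacian spectrum of $\mathcal{L}^{k-1}(G)$, and to pin down the latter by combining the Desai--Rao bound (Proposition \ref{Desai_prop}) with the line-graph inequality $\lambda_{\min}\ge -2$ \cite{Hoffman-75-1967}. The first step is to convert the hypothesis into minimum-degree information. The vertex of a line graph $\mathcal{L}(H)$ associated with an edge $uv$ of $H$ has degree $d_u+d_v-2$, so the assumption $d_u+d_v\ge 6$ for every edge of $G$ gives $\delta(\mathcal{L}(G))\ge 4$. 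If $\delta(\mathcal{L}^j(G))\ge 4$ for some $j\ge 1$, then each edge of $\mathcal{L}^j(G)$ has endpoint-degree sum at least $8$, so $\delta(\mathcal{L}^{j+1}(G))\ge 8-2=6\ge 4$; by induction $\delta(\mathcal{L}^j(G))\ge 4$ for all $j\ge 1$.

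Next I would establish the uniform estimate $q_{\min}(\mathcal{L}^j(G))\ge 2$ for every $j\ge 1$. Since $\mathcal{L}^j(G)$ is a line graph its least adjacency eigenvalue satisfies $\lambda_{\min}\ge -2$; substituting this and $\delta\ge 4$ into the inequality $\lambda_{\min}\le q_{\min}-\delta$ of Proposition \ref{Desai_prop} yields $q_{\min}(\mathcal{L}^j(G))\ge \lambda_{\min}+\delta\ge 2$. Consequently, for $k\ge 2$ every signless Laplacian eigenvalue of $\mathcal{L}^{k-1}(G)$ lies in $[2,\infty)$. Writing $\mathcal{L}^k(G)=\mathcal{L}(\mathcal{L}^{k-1}(G))$ and invoking \eqref{line_eig_snles_eqn}, the adjacency spectrum of $\mathcal{L}^k(G)$ consists of the eigenvalue $-2$ (with some multiplicity) together with the set $Sp_Q(\mathcal{L}^{k-1}(G))-2\subseteq [0,\infty)$; hence the only negative eigenvalue is $-2$, i.e. $\mathcal{L}^k(G)$ has property $\varrho$.

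For the energy I would use that, under property $\varrho$, the number $n^-$ of negative eigenvalues equals the multiplicity of $-2$, so the energy formula of Section~2 gives $\mathcal{E}(\mathcal{L}^k(G))=-2(-2n^-)=4n^-$. To evaluate $n^-$ I would apply the multiplicity rule recalled just after \eqref{line_eig_snles_eqn}: in $\mathcal{L}(H)$ the eigenvalue $-2$ has multiplicity $m_H-n_H$ when $H$ is non-bipartite. It therefore remains to check that $\mathcal{L}^{k-1}(G)$ is non-bipartite. For $k=2$ the hypothesis forces some vertex of $G$ to have degree $\ge 3$ (otherwise an edge would have degree sum at most $4$), and the edges at that vertex form a triangle in $\mathcal{L}(G)$; for $k\ge 3$ the already-established bound $\delta(\mathcal{L}^{k-2}(G))\ge 4$ supplies a vertex of degree $\ge 3$ whose incident edges form a triangle in $\mathcal{L}^{k-1}(G)$. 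In either case $\mathcal{L}^{k-1}(G)$ contains a triangle and is non-bipartite, so $n^-=m_{k-1}-n_{k-1}=n_k-n_{k-1}$ (recall $n_k=m_{k-1}$), giving $\mathcal{E}(\mathcal{L}^k(G))=4(n_k-n_{k-1})$. Since this value is determined by the pair $(n_{k-1},n_k)$ alone, any two graphs obeying the hypothesis whose relevant iterates share these parameters have equienergetic $k$-th iterated line graphs.

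The step carrying the real content, and the one I expect to be the main obstacle, is the uniform bound $q_{\min}(\mathcal{L}^j(G))\ge 2$: it is exactly the coupling of the line-graph inequality $\lambda_{\min}\ge -2$ with the growth of the minimum degree through Proposition \ref{Desai_prop} that forces the signless Laplacian spectrum to avoid the interval $(0,2)$. Once this is secured, the passage through \eqref{line_eig_snles_eqn}, the non-bipartiteness check, and the energy count are all routine.
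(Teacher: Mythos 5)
Your proof is correct and follows essentially the same route as the paper: converting the edge-degree-sum hypothesis into $\delta(\mathcal{L}^j(G))\ge 4$, combining Hoffman's bound $\lambda_{\min}\ge -2$ with Proposition~\ref{Desai_prop} to obtain $q_{\min}(\mathcal{L}^j(G))\ge 2$, and then reading off property $\varrho$ and the energy from \eqref{line_eig_snles_eqn}. The only differences are cosmetic: you turn the paper's informal ``minimum degree increases with $k$'' remark into an explicit induction, and your triangle-based non-bipartiteness check, while correct, is redundant, since $q_{\min}(\mathcal{L}^{k-1}(G))\ge 2>0$ already guarantees that $Sp_Q(\mathcal{L}^{k-1}(G))-2$ contributes no additional $-2$'s, so the multiplicity $m_{k-1}-n_{k-1}$ follows directly from \eqref{line_eig_snles_eqn}.
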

\begin{proof}
	If $G$ is a graph of order $n_0$ and size $m_0$ with an edge $e=uv$, then in the line graph $\mathcal{L}(G)$, the degree of the vertex corresponding to the edge $e$ is $d_u + d_v - 2$. Given that $d_u + d_v \ge 6$ for every edge $e = uv$ in $G$, it follows that $d_u + d_v - 2 \ge 4$. This implies that the minimum degree of each vertex in $\mathcal{L}(G)$ is at least four. It is well known that the least eigenvalue of the line graph $\mathcal{L}(G)$ is not smaller than $-2$. Therefore, by Proposition \ref{Desai_prop}, the least eigenvalue $\lambda_{\text{min}}$, the smallest signless Laplacian eigenvalue $q_{\text{min}}$, and the minimum degree $\delta$ of $\mathcal{L}(G)$ satisfy $q_{\text{min}} \ge \lambda_{\text{min}} + \delta \ge -2 + 4 = 2$. Now, by relation \eqref{line_eig_snles_eqn}, $\mathcal{L}(\mathcal{L}(G)) = \mathcal{L}^2(G)$ satisfies property $\rho$, with the multiplicity of $-2$ being equal to $m_1 - n_1$. It can be easily observed that the minimum degree $\delta$ in the line graphs $\mathcal{L}^k(G)$ increases as $k$ increases. This implies that $q_{\text{min}}$ of $\mathcal{L}^k(G)$ also increases with $k$ and is at least $2$. 	
	Hence, by relation \eqref{line_eig_snles_eqn}, the iterated line graphs $\mathcal{L}^k(G)$ satisfy property $\rho$ for each $k \ge 2$, with their energy equal to $4(m_{k-1} - n_{k-1}) = 4(n_k - n_{k-1})$.
\end{proof}
\noindent 
A tree graph is called a caterpillar if removing all its pendant vertices results in a path graph. The spectra and the energy of the line graphs of caterpillars are studied in \cite{OscarRojo}.
\begin{example}
	The graph in Figure 1 is the caterpillar $C(4,3,4)$. This graph satisfies the conditions of Theorem \ref{Iter_line-2}, so the graphs $\mathcal{L}^k\big(C(4,3,4)\big)$ satisfy the property $\rho$ for $k\ge 2$.	
	
	\begin{figure}[h]
		\centering
		\includegraphics[width=0.5\linewidth]{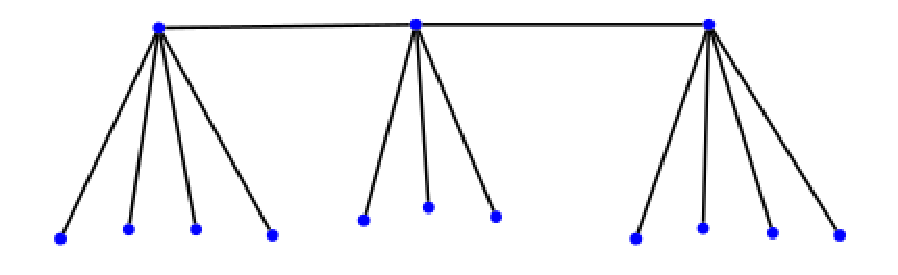}\\
		\caption{Caterpillar $C(4,3,4)$}
		\label{Fig1}
	\end{figure}
	
\end{example}


\begin{corollary}\label{Cor_Iter_line-2}
	Let $G$ be a graph of order $n_0$ and size $m_0$ with a minimum degree $\delta \ge 3$. Then, the graphs $\mathcal{L}^k(G)$ satisfy property $\rho$ for $k \ge 2$. Moreover, all the iterated line graphs $\mathcal{L}^k(G)$ of such a graph $G$ are mutually equienergetic, with energy equal to $4(n_k - n_{k-1})$ for $k \ge 2$.
\end{corollary}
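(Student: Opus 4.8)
The plan is to reduce this corollary directly to Theorem~\ref{Iter_line-2}. The key observation is that the minimum-degree hypothesis $\delta \ge 3$ is nothing more than a convenient, purely local sufficient condition for the edge-wise degree-sum condition $d_u + d_v \ge 6$ that actually drives Theorem~\ref{Iter_line-2}. Once that implication is recorded, the conclusion is immediate.

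First I would note that if every vertex of $G$ has degree at least $3$, then for any edge $e = uv$ both endpoints satisfy $d_u \ge 3$ and $d_v \ge 3$, so that $d_u + d_v \ge 3 + 3 = 6$. Hence the hypothesis of the corollary implies, edge by edge, the hypothesis of Theorem~\ref{Iter_line-2}. With this in hand I would simply invoke Theorem~\ref{Iter_line-2}: the graphs $\mathcal{L}^k(G)$ satisfy property $\varrho$ for all $k \ge 2$, and they are mutually equienergetic with energy $4(n_k - n_{k-1})$. No further argument is required, and in particular all the genuine spectral work (the bound $q_{\min} \ge \lambda_{\min} + \delta \ge 2$ via Proposition~\ref{Desai_prop}, the monotonicity of $\delta$ under iteration of $\mathcal{L}$, and the application of relation~\eqref{line_eig_snles_eqn}) lives inside the proof of Theorem~\ref{Iter_line-2}.

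Accordingly, there is essentially no obstacle of its own in this corollary: the only step with any content is the elementary implication $\delta \ge 3 \Rightarrow d_u + d_v \ge 6$. The one point I would flag for the reader is that $\delta \ge 3$ is strictly stronger than the edge condition $d_u + d_v \ge 6$ --- for instance, a degree-$2$ vertex all of whose neighbours have degree at least $4$ still meets the edge condition while violating $\delta \ge 3$. Thus the corollary covers a proper subclass of the graphs handled by Theorem~\ref{Iter_line-2}, trading a small loss of generality for a cleaner hypothesis that can be checked by inspecting a single global graph parameter.
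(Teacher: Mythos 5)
Your proof is correct and follows exactly the route the paper intends: the paper states this as an immediate corollary of Theorem~\ref{Iter_line-2}, with the only content being the implication $\delta \ge 3 \Rightarrow d_u + d_v \ge 6$ for every edge $uv$, which you supply. Your closing remark that the corollary covers a strictly smaller class of graphs than the theorem is also accurate.
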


\begin{remark}
	In \cite{HSRamane}, Ramane et al. obtained the spectra and the energy of iterated line graphs of regular graphs with degree $r \ge 3$, thereby characterizing a large class of pairs of non-trivial equienergetic regular graphs. It is noted that all the results in their paper are special cases of Corollary \ref{Cor_Iter_line-2}.
\end{remark}
Now,  Theorem \ref{Iter_line-2} naturally leads us to consider when $\mathcal{L}(G)$ satisfies property $\rho$, given that $\mathcal{L}^k(G)$ satisfies property $\rho$ for $k \ge 2$ in a graph $G$ where $d_u + d_v \ge 6$ for each edge $e = uv$. In this context, we have the following results.

\begin{lemma}\label{leasteigQpi}
	Let $G$ be a connected graph of order $n(\ge 2)$ and $H_i$ be $r_i$-regular graph of order $n_i$, where $r_i\ge 1$, for $i=1, 2, \ldots, n$. Then the least eigenvalue of the quotient matrix $Q_{\pi}$ of the graph $G[H_1, H_2,\ldots, H_n]$ is at least  $2$. 
\end{lemma}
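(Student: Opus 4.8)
The plan is to apply the Ger\v{s}gorin disc theorem (Theorem \ref{Gersgorin}) directly to the \emph{non-symmetric} quotient matrix $Q_{\pi}$, rather than to its symmetric counterpart $Q^{H}_{\pi}$, because the row structure of $Q_{\pi}$ produces an exact cancellation that the $\sqrt{n_in_j}$-symmetrization destroys. First I would record the entries of $Q_{\pi}$ for the $H$-join with the equitable partition $\pi=(V(H_1),\ldots,V(H_n))$. A vertex of $H_i$ has $r_i$ neighbours inside its own block and, whenever $v_iv_j\in E(G)$, all $n_j$ vertices of $H_j$ as neighbours. Hence, writing $a_{ij}$ for the entries of $A(G)$ and $R_i=\sum_{j\neq i}a_{ij}n_j$ for the $i$-th off-diagonal row sum, the off-diagonal entries are $q^{\pi}_{ij}=a_{ij}n_j\ge 0$ and the diagonal entries are $q^{\pi}_{ii}=2r_i+R_i$, exactly as recorded in Proposition \ref{HSpecQeql}.

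The key observation is that the $i$-th Ger\v{s}gorin disc of $Q_{\pi}$ is centred at the real point $q^{\pi}_{ii}=2r_i+R_i$ and has radius $\sum_{j\neq i}|q^{\pi}_{ij}|=\sum_{j\neq i}a_{ij}n_j=R_i$. Its leftmost real point is therefore $q^{\pi}_{ii}-R_i=2r_i$, and since $r_i\ge 1$ by hypothesis this is at least $2$. Thus every Ger\v{s}gorin disc of $Q_{\pi}$ is contained in the half-plane $\{\mathrm{Re}(z)\ge 2\}$, so by Theorem \ref{Gersgorin} every eigenvalue of $Q_{\pi}$ has real part at least $2$. (Note that connectedness of $G$ is not actually needed here; the bound depends only on $r_i\ge 1$.)

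Finally I would invoke Theorem \ref{eqlquospec}: $Q_{\pi}$ has the same spectrum as the symmetric matrix $Q^{H}_{\pi}$, so all of its eigenvalues are real. Combined with the previous paragraph, every eigenvalue of $Q_{\pi}$ is a real number that is at least $2$, which proves the claim; in fact the argument shows the least eigenvalue is at least $\min_i 2r_i$.

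The only genuine obstacle is conceptual rather than computational: applying Ger\v{s}gorin to the symmetric $Q^{H}_{\pi}$ fails, because there the radius of disc $i$ is $\sum_{j\neq i}a_{ij}\sqrt{n_in_j}$, whose terms $n_j-\sqrt{n_in_j}=\sqrt{n_j}\bigl(\sqrt{n_j}-\sqrt{n_i}\bigr)$ need not be nonnegative, so the diagonal excess $R_i$ is no longer cancelled and the leftmost point of the disc can drop below $2$. Recognising that the non-symmetric matrix is the right object to localise — where the radius is exactly $R_i$ — is the whole trick; the reality of the spectrum, which licenses the passage from ``real part $\ge 2$'' to ``eigenvalue $\ge 2$'', then comes for free from Theorem \ref{eqlquospec}.
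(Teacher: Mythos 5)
Your proof is correct and takes essentially the same route as the paper: the paper likewise applies the Ger\v{s}gorin theorem to the non-symmetric quotient matrix $Q_{\pi}$, using $q^{\pi}_{ii}=2r_i+R_i$ with radius $R_i$ to place all eigenvalues in $\bigcup_{i}\,[2r_i,\,2(r_i+R_i)]$, and obtains reality of the spectrum from the identification of $Sp(Q_{\pi})$ with the spectrum of the symmetric matrix $Q^{H}_{\pi}$ (via Proposition \ref{HSpecQeql}/Theorem \ref{eqlquospec}). Your additional observations --- the sharper bound $\min_i 2r_i$, the reason Ger\v{s}gorin fails on the symmetrized $Q^{H}_{\pi}$, and the fact that connectedness of $G$ is not actually used --- are accurate but not part of the paper's argument.
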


\begin{proof}
	The signless Laplacian matrix $Q$ of a graph $G$ is positive semidefinite, which possesses real eigenvalues. Consequently, by Proposition \ref{HSpecQeql}, the eigenvalues of the quotient matrix $Q_{\pi}$ are also real. In the $i$-th row of $Q_{\pi}$, the diagonal entry $q^{\pi}_{ii}$ satisfies $q^{\pi}_{ii} = 2r_i + R_i$, where $R_i$ represents the sum of the $i$-th row of $Q_{\pi}$ excluding $q^{\pi}_{ii}$. By applying Ger\v{s}gorin’s  Theorem \ref{Gersgorin}, all the eigenvalues of $Q_{\pi}$ are contained within the union of the closed intervals $[2r_i, 2(r_i + R_i)]$ for $i = 1, 2, \dots, n$. Now, the result follows from the condition that $r_i \geq 1$.
	
\end{proof} 

\begin{theorem}\label{lineofjoin}
	Let $G$ be a connected graph of order $n(\ge 2)$ and $H_i$ be an $r_i$-regular graph of order $p_i$, where $r_i\ge 1$, for $i=1, 2, \ldots, n$. If ${\Gamma}=G[H_1, H_2, \ldots, H_n]$ is a graph of order $n_0$ and size $m_0$  then the graphs $\mathcal{L}^k(\Gamma)$ satisfy the property $\rho$ for $k\ge 1$. Furthermore, the line graphs of graphs $G'$ of order $n_k$ and size $m'_{k}$ for which $\mathcal{L}^k(\Gamma)$ is a spanning subgraph are mutually equienergetic with energy $4(m'_k-n_k)$ for $k\ge 0$.
\end{theorem}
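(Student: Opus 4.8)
\section*{Proof proposal}

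The plan is to reduce property $\rho$ for $\mathcal{L}^k(\mathsf{G})$ to the signless Laplacian threshold condition coming from relation \eqref{line_eig_snles_eqn}, treating the regimes $k=1$ and $k\ge 2$ separately. Recall that $\mathcal{L}(\mathsf{G})$ satisfies $\rho$ exactly when $Sp_Q(\mathsf{G})\subseteq\{0\}\cup[2,\infty)$; since $\mathsf{G}$ is connected (because $G$ is connected of order $n\ge 2$, so each block $H_i$ is completely joined to at least one other block), it is enough to show $q_{\min}(\mathsf{G})\ge 2$. I would read off $Sp_Q(\mathsf{G})$ from Proposition \ref{HSpecQeql}: it is the union of the shifted sets $R_i+\big(Sp_Q(H_i)\setminus\{2r_i\}\big)$ together with $Sp(Q_\pi)$. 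The $Q_\pi$-part is $\ge 2$ by Lemma \ref{leasteigQpi}. For the shifted parts, $Q(H_i)$ is positive semidefinite so every element of $Sp_Q(H_i)\setminus\{2r_i\}$ is nonnegative, while $R_i=\sum_{j:\,v_iv_j\in E(G)}p_j\ge 2$ since connectivity forces a neighbour $v_j$ of $v_i$ with $p_j\ge r_j+1\ge 2$; hence each shifted eigenvalue is $\ge R_i\ge 2$. Therefore $q_{\min}(\mathsf{G})\ge 2$, and $\mathcal{L}(\mathsf{G})$ satisfies $\rho$, settling $k=1$.

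For $k\ge 2$ I would note that a vertex of $\mathsf{G}$ lying in $H_i$ has degree $r_i+R_i\ge 1+2=3$, so $\delta(\mathsf{G})\ge 3$ and every edge $e=uv$ of $\mathsf{G}$ satisfies $d_u+d_v\ge 6$. This is exactly the hypothesis of Theorem \ref{Iter_line-2} (indeed of Corollary \ref{Cor_Iter_line-2}), which already gives that $\mathcal{L}^k(\mathsf{G})$ satisfies $\rho$ for all $k\ge 2$. Combined with the $k=1$ case, property $\rho$ holds for every $k\ge 1$.

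For the equienergetic statement I would first establish the uniform bound $q_{\min}\big(\mathcal{L}^k(\mathsf{G})\big)\ge 2$ for all $k\ge 0$. The case $k=0$ is the bound $q_{\min}(\mathsf{G})\ge 2$ above. For $k\ge 1$, since $\delta(\mathsf{G})\ge 3$ the minimum degree of $\mathcal{L}(\mathsf{G})$ is at least $4$ and only grows under further iteration, while $\mathcal{L}^k(\mathsf{G})$ is a line graph so $\lambda_{\min}\ge -2$; Proposition \ref{Desai_prop} then yields $q_{\min}\ge \lambda_{\min}+\delta\ge -2+4=2$. Now for any $G'$ of order $n_k$ having $\mathcal{L}^k(\mathsf{G})$ as a spanning subgraph, Proposition \ref{d_Lima} gives $q_{\min}(G')\ge q_{\min}\big(\mathcal{L}^k(\mathsf{G})\big)\ge 2$, so by relation \eqref{line_eig_snles_eqn} the only negative eigenvalue of $\mathcal{L}(G')$ is $-2$; that is, $\mathcal{L}(G')$ satisfies $\rho$.

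Finally I would compute the energy. A graph with property $\rho$ having $n^-$ negative eigenvalues has energy $4n^-$, since the trace is $0$ forces the positive part to equal $2n^-$. Here $G'$ contains a triangle: for $k\ge 1$ because a line graph of a graph with $\delta\ge 2$ contains triangles, and for $k=0$ because an edge inside some $H_i$ together with a vertex of an adjacent block forms a triangle in $\mathsf{G}$. Thus $G'$ is non-bipartite and the multiplicity of $-2$ in $\mathcal{L}(G')$ is exactly $m'_k-n_k$, whence $\mathcal{E}\big(\mathcal{L}(G')\big)=4(m'_k-n_k)$. As this depends only on $n_k$ and $m'_k$, and all such line graphs share the common order $m'_k$, those of the same size $m'_k$ are mutually equienergetic. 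The step most needing care is fixing the multiplicity of $-2$ as $m'_k-n_k$ rather than $m'_k-n_k+1$, which is precisely where the non-bipartiteness of $G'$ (via the triangle argument) is indispensable.
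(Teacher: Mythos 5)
Your proof is correct and follows essentially the same route as the paper's: $q_{\min}(\mathsf{G})\ge 2$ via Proposition \ref{HSpecQeql} and Lemma \ref{leasteigQpi}, property $\rho$ for $k=1$ via relation \eqref{line_eig_snles_eqn}, the observation $\delta(\mathsf{G})\ge 3$ combined with Corollary \ref{Cor_Iter_line-2} for $k\ge 2$, and Proposition \ref{d_Lima} for the graphs $G'$ containing $\mathcal{L}^k(\mathsf{G})$ as a spanning subgraph. You are in fact more careful than the paper on two points: you explicitly prove $q_{\min}\big(\mathcal{L}^k(\mathsf{G})\big)\ge 2$ for all $k\ge 0$ (via $\delta\ge 4$, $\lambda_{\min}\ge -2$ and Proposition \ref{Desai_prop}) before invoking Proposition \ref{d_Lima}, whereas the paper's text only argues with $\mathsf{G}$ itself as the spanning subgraph, i.e.\ the case $k=0$; and you address why the multiplicity of $-2$ in $\mathcal{L}(G')$ is exactly $m'_k-n_k$ rather than $m'_k-n_k+1$, which the paper leaves implicit.

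One small repair in that last step: your claim that a line graph of a graph with $\delta\ge 2$ contains triangles is false in general ($\mathcal{L}(C_n)=C_n$ has no triangle for $n\ge 4$); what you need is a vertex of degree at least $3$, which you do have here since $\delta\big(\mathcal{L}^{k-1}(\mathsf{G})\big)\ge 3$. But the whole triangle/non-bipartiteness detour is unnecessary: once $q_{\min}(G')\ge 2$, zero is not a signless Laplacian eigenvalue of $G'$ (equivalently, no component of $G'$ is bipartite), so relation \eqref{line_eig_snles_eqn} already forces the negative spectrum of $\mathcal{L}(G')$ to be exactly $\{-2^{m'_k-n_k}\}$, and the energy $4(m'_k-n_k)$ follows directly.
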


\begin{proof}
	Since $ G$ is a connected graph of order $ n \geq 2$, for every vertex $ v_i$ in $ G$, there exists at least one adjacent vertex $ v_j$. This implies that each vertex of the graph $ H_i$ is adjacent to every vertex of $ H_j$ for at least one $ j$ in $ \Gamma = G[H_1, H_2, \ldots, H_n]$. Moreover, since $ H_i$ are $ r_i$-regular graphs of order $ p_i$, with $ r_i \geq 1$, it follows that $ p_i \geq 2$. Thus the quotient matrix $ Q_{\pi}$ corresponding to the equitable partition $ \pi = (V(H_1), V(H_2), \ldots, V(H_n))$ has a non-diagonal entry $ q^{\pi}_{ij} \geq 2$ in its $ i$-th row for at least one $ j$.	
	Since the signless Laplacian $ Q$ of any graph is positive semidefinite, it has non-negative eigenvalues. By applying Proposition \ref{HSpecQeql} and Lemma \ref{leasteigQpi}, we conclude that every signless Laplacian eigenvalue of $ \Gamma$ is at least $2$. From the relation \eqref{line_eig_snles_eqn}, it follows that the line graph $ \mathcal{L}(\Gamma)$ satisfies the property $ \rho$.
	The minimum order graph $ \Gamma$ with the smallest possible degrees is $ \Gamma = K_2[K_2, K_2]$, which shows that the minimum degree $ \delta$ of $ \Gamma$ is at least $3$. Thus, by Corollary \ref{Cor_Iter_line-2} all the iterated line graphs $ \mathcal{L}^k(\Gamma)$ for $ k \geq 1$ satisfy the property $ \rho$. Since $ \Gamma$ is a spanning subgraph of $ G'$, Proposition \ref{d_Lima} implies that $ G'$ also has $ q_{\min} \geq 2$, which further implies that the graph $ \mathcal{L}(G')$ satisfies the property $ \rho$ with its energy equal to $ 4(m'_k - n_k)$ for $ k \geq 0$ by relation \eqref{line_eig_snles_eqn}. This completes the proof.
\end{proof}

\begin{remark}\label{Join_remark}
	Let $G$ be a connected graph of order $n\ge 2$ and $H_i$ be an $r_i$-regular graph of order $p_i$  with $r_i\ge 1$, for $i=1, 2, \ldots, n$. Let ${\Gamma}=G[H_1, \ldots, H_j, \ldots, H_n]$ for  $1\le j\le i$ and $H_{s_1}$, $H_{s_2}$ be two $r$-regular graphs of same order with $r\ge 1$. If ${\Gamma_1}=G[H_1, \ldots, H_{s_1}, \ldots, H_n]$ and ${\Gamma_2}=G[H_1, \ldots, H_{s_2}, \ldots, H_n]$, then the graphs  $\mathcal{L}^k(\Gamma_1)$ and $\mathcal{L}^k(\Gamma_2)$ are equienergetic for $k\ge 1$. Further, if $H_{s_1}$ and $H_{s_2}$ are non co-spectral (co-spectral) graphs then we get non co-spectral (co-spectral) graphs $\mathcal{L}^k(\Gamma_1)$ and $\mathcal{L}^k(\Gamma_2)$ respectively, for $k\ge 0$ as they have same quotient matrices. 
\end{remark}

\begin{remark}
	In Theorem \ref{Kinkar}, Das et al. characterized a large class of equienergetic line graphs of graphs of order $ n$ under the condition that the minimum degree $ \delta \geq \frac{n}{2} + 1$. However, one can also construct equienergetic line graphs of graphs of order $ n$ with the condition $ \delta \leq \frac{n}{2} + 1$ by using Theorem \ref{lineofjoin}. For example, if $ \Gamma = K_2[K_2, C_n]$ with $ n \geq 6$, it is always possible to construct non-isomorphic equienergetic line graphs of $ \Gamma$ with energy equal to $4$ times (size of $\Gamma-$ order of  $\Gamma)$ for $ \delta = 4$ by adding edges between non-adjacent vertices of $ \Gamma$.
	
\end{remark}
\begin{theorem}\label{vertex_del}
	Let $G$ be a connected graph of order $n(\ge 2)$ and $H_i$ be an $r_i$-regular graph of order $p_i$, where $r_i\ge 2$, for $i=1, 2, \ldots, n$. If ${\Gamma}=G[H_1, H_2,\ldots, H_n]$, then the graphs $\mathcal{L}^k(\Gamma-v)$ satisfy the property $\rho$ for $k\ge 1$. Moreover, if $n_i\ge min\{2r_i: r_i\ge s\ge2\}$ then the graphs $\mathcal{L}^k(\Gamma-\{v_1, v_2, \ldots, v_{2(s-1)}\})$ satisfy the property $\rho$ for $k\ge 1$.
\end{theorem}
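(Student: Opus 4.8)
The plan is to reduce everything to a single quantity, the least signless Laplacian eigenvalue $q_{\min}$, and then let the vertex-deletion estimate of Theorem \ref{He_Chang} do the work. Recall from relation \eqref{line_eig_snles_eqn} that a line graph $\mathcal{L}(F)$ satisfies the property $\rho$ precisely when $q_{\min}(F)\ge 2$: in that case $Sp_Q(F)-2$ is nonnegative, so the only negative eigenvalues of $\mathcal{L}(F)$ are the copies of $-2$. Hence the whole statement splits into showing that the deleted graph has $q_{\min}\ge 2$ (which settles $k=1$) and checking a minimum-degree condition that lets Corollary \ref{Cor_Iter_line-2} take over for $k\ge 2$.

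First I would pin down a lower bound for $q_{\min}(\mathsf{G})$ itself. By Proposition \ref{HSpecQeql} the spectrum $Sp_Q(\mathsf{G})$ is the union of the internal part $\cup_i\big(R_i+(Sp_Q(H_i)\setminus\{2r_i\})\big)$ and the quotient part $Sp(Q_{\pi})$. Since each $H_i$ is $r_i$-regular, $q_{\min}(H_i)=r_i+\lambda_{\min}(A(H_i))\ge 0$, and $2r_i$ is the \emph{largest} element of $Sp_Q(H_i)$, so discarding it cannot lower the block minimum; thus the smallest internal contribution from block $i$ is at least $R_i$. Because $G$ is connected with $n\ge 2$, every $v_i$ has a neighbour, so $R_i\ge p_j\ge 3$ already when $r_i\ge 2$ (a $2$-regular graph has at least three vertices). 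For the quotient part, the Ger\v{s}gorin argument of Lemma \ref{leasteigQpi} confines $Sp(Q_{\pi})$ to $\cup_i[2r_i,\,2(r_i+R_i)]$, whose left endpoint is at least $\min_i 2r_i\ge 4$. Combining the two parts gives $q_{\min}(\mathsf{G})\ge 3$ under $r_i\ge 2$.

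For the first assertion, Theorem \ref{He_Chang} then yields $q_{\min}(\mathsf{G}-v)\ge q_{\min}(\mathsf{G})-1\ge 2$, so $\mathcal{L}(\mathsf{G}-v)$ satisfies $\rho$, which is the case $k=1$. For $k\ge 2$ I would observe that every vertex of $\mathsf{G}$ has degree $r_i+R_i\ge 2+3=5$, so $\mathsf{G}-v$ has minimum degree at least $4\ge 3$, and Corollary \ref{Cor_Iter_line-2} closes the iterated statement. For the ``moreover'' part I would run the same machine with $s$ in place of $2$: reading the hypothesis as $r_i\ge s$ and $p_i=n_i\ge 2s$ (so that $\min\{2r_i:r_i\ge s\}=2s$), the internal part is at least $R_i\ge 2s$ and the quotient part is at least $2s$, whence $q_{\min}(\mathsf{G})\ge 2s$. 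Applying the one-vertex bound of Theorem \ref{He_Chang} exactly $2(s-1)$ times gives $q_{\min}\big(\mathsf{G}-\{v_1,\ldots,v_{2(s-1)}\}\big)\ge 2s-2(s-1)=2$, delivering $\rho$ for $k=1$; meanwhile the degrees fall from at least $3s$ to at least $3s-2(s-1)=s+2\ge 4$, so Corollary \ref{Cor_Iter_line-2} again supplies $k\ge 2$.

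The step demanding the most care—the main obstacle—is the uniform lower bound $q_{\min}(\mathsf{G})\ge 2s$. One must check that deleting the single value $2r_i$ from each block spectrum never removes that block's minimum (true because $2r_i$ is the top eigenvalue of the regular graph $H_i$), and that the Ger\v{s}gorin discs of the \emph{non-symmetric} $Q_{\pi}$ still force its eigenvalues above $2s$. Everything afterwards is the linear bookkeeping of how far $q_{\min}$ and the minimum degree can drop under $2(s-1)$ successive vertex removals, which is exactly calibrated to keep us at the threshold $2$.
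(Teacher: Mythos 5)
Your proposal is correct and follows essentially the same route as the paper's own proof: bound $q_{\min}(\mathsf{G})$ from below by combining Proposition \ref{HSpecQeql} with the Ger\v{s}gorin estimate of Lemma \ref{leasteigQpi}, transfer it to the vertex-deleted graph via Theorem \ref{He_Chang} (applied once, respectively $2(s-1)$ times), invoke relation \eqref{line_eig_snles_eqn} for $k=1$, and use the minimum-degree count with Corollary \ref{Cor_Iter_line-2} for $k\ge 2$. The only quibble is your phrase ``precisely when $q_{\min}(F)\ge 2$'': this is a sufficient condition, not an equivalence (the property $\rho$ also tolerates signless Laplacian eigenvalue $0$, as for bipartite $F$), but since you only use the sufficiency direction the argument is unaffected.
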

\begin{proof}
	All the eigenvalues of $ Q_{\pi} $ of $ \Gamma $ belong to the union of the closed intervals $ [2r_i, 2(r_i+R_i)] $ for $ i=1, 2, \ldots, n $ by Lemma \ref{leasteigQpi}, which shows that each eigenvalue of $ Q_{\pi} $ is at least $ 4 $ since $ r_i \ge 2 $. Each row of the matrix $ Q_{\pi} $ has at least one non-diagonal entry that is at least $ 3 $ as $ r_i \ge 2 $ and $ G $ is connected with order $ n \ge 2 $. By Proposition \ref{HSpecQeql}, the $ q_{\min} $ of $ \Gamma $ is at least $ 3 $. By Theorem \ref{He_Chang}, the $ q_{\min} $ of $ \Gamma-v $ is at least $ 2 $. Hence $ \mathcal{L}(\Gamma-v) $ satisfies the property $ \rho $ by relation \eqref{line_eig_snles_eqn}. 
	If $ n_i \ge \min\{2r_i: r_i \ge s \ge 2\} $, then the $ q_{\min} $ of $ Q_{\pi} $ is at least $ 2s $, and each row of $ Q_{\pi} $ has at least one non-diagonal entry at least $ 2s $ with $ s\ge 2 $. By Proposition \ref{HSpecQeql}, the $ q_{\min} $ of $ \Gamma $ is at least $ 2s $. It is easy to observe by Theorem \ref{He_Chang} that 
	$
	q_{\min}(\Gamma) - 2(s-1) \le q_{\min}\big(\Gamma-\{v_1, v_2, \ldots, v_{2(s-1)}\}\big).
	$
	Hence $ \mathcal{L}\big(\Gamma-\{v_1, v_2, \ldots, v_{2(s-1)}\}\big) $ satisfies the property $ \rho $ by relation \eqref{line_eig_snles_eqn}. 
	The graphs $ \Gamma-v $ for $ r_i \ge 2 $ and $ \Gamma-\{v_1, v_2, \ldots, v_{2(s-1)}\} $ for $ n_i \ge \min\{2r_i: r_i \ge s \ge 2\} $ both are graphs with minimum degree $ \delta \ge 3 $. Thus by Corollary \ref{Cor_Iter_line-2}, all the iterated line graphs $ \mathcal{L}^k(\Gamma-v) $ and $ \mathcal{L}^k(\Gamma-\{v_1, v_2, \ldots, v_{2(s-1)}\}) $ for $ k \ge 1 $ satisfy the property $ \rho $.
	
\end{proof}

\noindent The following is an interesting result due to the minimum number of edges in join of two connected graphs. 

\begin{corollary}\label{path_join_ite}
	If $m, n\ge 3$ then the graphs $\mathcal{L}^k\big(K_2[P_n, P_m]\big)$ satisfy the property $\rho$ for $k\ge 1$.
\end{corollary}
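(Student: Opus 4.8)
The plan is to reduce the entire statement to the single signless-Laplacian estimate $q_{\min}(K_2[P_n,P_m])\ge 2$. Indeed, once this is known, relation \eqref{line_eig_snles_eqn} shows that every eigenvalue of $\mathcal{L}(K_2[P_n,P_m])$ is either $-2$ or of the form $q_i-2\ge 0$, so $\mathcal{L}(K_2[P_n,P_m])$ has property $\rho$ (the case $k=1$); and since the minimum degree of $K_2[P_n,P_m]$ is $\min\{n,m\}+1\ge 4\ge 3$, Corollary \ref{Cor_Iter_line-2} delivers property $\rho$ for all $k\ge 2$. I would also emphasise that $K_2[P_n,P_m]=P_n\vee P_m$ is the join of two connected graphs with the fewest possible edges (paths are the sparsest connected graphs on a given number of vertices); hence, by Proposition \ref{d_Lima}, the bound $q_{\min}(P_n\vee P_m)\ge 2$ would immediately force $q_{\min}\ge 2$ for the join of \emph{any} two connected graphs on $n$ and $m$ vertices, which is exactly the content extending Theorem \ref{lineofjoin} beyond regular factors. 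So everything rests on the join of two paths.

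When $n$ and $m$ are both even I would argue exactly as in Theorem \ref{lineofjoin}: a perfect matching $F_n\subseteq P_n$ and $F_m\subseteq P_m$ are $1$-regular, the spanning subgraph $\mathsf{G}=K_2[F_n,F_m]$ is an $H$-join of regular graphs, so Lemma \ref{leasteigQpi} (via Proposition \ref{HSpecQeql}) gives $q_{\min}(\mathsf{G})\ge 2$, and since $\mathsf{G}$ is a spanning subgraph of $P_n\vee P_m$, Proposition \ref{d_Lima} yields $q_{\min}(P_n\vee P_m)\ge q_{\min}(\mathsf{G})\ge 2$. The hard part is the parity: a path of odd order has no $(\ge 1)$-regular spanning subgraph, and the natural blocks with $q_{\min}\ge 2$ realisable inside $P_n\vee P_m$ (a $K_4$ on $\{a_i,a_{i+1},b_j,b_{j+1}\}$, or a matched $H$-join) all consume an even number of vertices from each side, so no regular $H$-join spanning subgraph exists once $n$ or $m$ is odd; moreover, deleting one vertex and invoking Theorem \ref{He_Chang} loses precisely the single unit the bound cannot spare, because the even case already gives $q_{\min}=2$ on the nose.

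I would therefore settle the estimate uniformly by a direct Rayleigh-quotient computation. Writing $x=(\alpha,\beta)$ with $\alpha,\beta$ the restrictions of $x$ to the two paths, $S_A=\sum_i\alpha_i$, $S_B=\sum_j\beta_j$, and splitting off the means $\bar\alpha=S_A/n$, $\bar\beta=S_B/m$ (with $\alpha',\beta'$ the mean-zero parts), one obtains
\[ x^{T}Qx=P_A+P_B+mn(\bar\alpha+\bar\beta)^2+m\lVert\alpha'\rVert^2+n\lVert\beta'\rVert^2, \]
where $P_A=\sum(\alpha_i+\alpha_{i+1})^2\ge 0$ and $P_B=\sum(\beta_j+\beta_{j+1})^2\ge 0$ are the path contributions. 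Because $m,n\ge 3$, the terms $m\lVert\alpha'\rVert^2+n\lVert\beta'\rVert^2$ already dominate $2(\lVert\alpha'\rVert^2+\lVert\beta'\rVert^2)$ with room to spare, so the target inequality $x^{T}Qx\ge 2\lVert x\rVert^2$ collapses to the ``constant on each side'' directions, where the path terms $P_A,P_B$ are exactly what lift the complete-bipartite value $q_{\min}(K_{n,m})=0$ up to $2$; tracking the constants reduces the obstruction to $n+m\ge 4$, so $n,m\ge 3$ is comfortably enough. The extremal vectors of the form $\alpha=(s,0,\dots,0,s)$, $\beta=-\alpha$ attain equality, confirming $q_{\min}(P_n\vee P_m)=2$ and that the constant $2$ is best possible. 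This last step is the one I expect to demand the most care, precisely because the non-regularity of the paths forbids the clean quotient-matrix shortcut available in the even case.
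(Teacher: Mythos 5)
Your reduction of the corollary to the single estimate $q_{\min}(K_2[P_n,P_m])\ge 2$ is sound (relation \eqref{line_eig_snles_eqn} for $k=1$, minimum degree $\ge 3$ plus Corollary \ref{Cor_Iter_line-2} for $k\ge 2$), and your even case via perfect matchings and Proposition \ref{d_Lima} is correct. The gap is in the step that carries the whole load: the Rayleigh-quotient argument. Your identity $x^TQx=P_A+P_B+mn(\bar\alpha+\bar\beta)^2+m\lVert\alpha'\rVert^2+n\lVert\beta'\rVert^2$ is right, but the claim that the inequality $x^TQx\ge 2\lVert x\rVert^2$ then ``collapses to the constant-on-each-side directions'' is false, because $P_A$ and $P_B$ do not decouple under the splitting $\alpha=\bar\alpha{\bf j}+\alpha'$: expanding gives $P_A=4(n-1)\bar\alpha^2-4\bar\alpha(\alpha'_1+\alpha'_n)+\sum_{i=1}^{n-1}(\alpha'_i+\alpha'_{i+1})^2$, and the negative cross term $-4\bar\alpha(\alpha'_1+\alpha'_n)$ is exactly what your sketch discards. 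It cannot be discarded: for $n=m=3$ the vector $x=(s,0,s,-s,0,-s)$ gives $x^TQx=8s^2=2\lVert x\rVert^2$, so the desired inequality is tight, and it is tight at a vector that is far from constant on each side, whereas on nonzero constant directions there is strict slack $(2n-4)\bar\alpha^2+(2m-4)\bar\beta^2+mn(\bar\alpha+\bar\beta)^2>0$. In other words, the minimum of the Rayleigh quotient is attained precisely where the cross terms consume all of your ``room to spare'', so verifying constant directions proves nothing, and any completion of your computation must absorb those cross terms with constants that have zero slack. (A further sign of trouble: your claimed extremal family $\alpha=(s,0,\dots,0,s)$, $\beta=(-s,0,\dots,0,-s)$ has Rayleigh quotient $(n+m-2)/2$, which equals $2$ only when $n=m=3$.)

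Ironically, the route you talked yourself out of is the paper's proof. You only considered deleting a vertex from a join of paths, where $q_{\min}=2$ leaves no margin for Theorem \ref{He_Chang}; the paper instead closes each path into a cycle and writes $K_2[P_n,P_m]=K_2[C_{n+1},C_{m+1}]-\{v_1,v_2\}$ with $v_1\in C_{n+1}$, $v_2\in C_{m+1}$. Since $K_2[C_{n+1},C_{m+1}]$ is an $H$-join of $2$-regular graphs, Lemma \ref{leasteigQpi} and Proposition \ref{HSpecQeql} give $q_{\min}\ge 4$ (this is Theorem \ref{vertex_del} with $s=2$, whose hypothesis $n+1,\,m+1\ge 4$ is exactly your condition $n,m\ge 3$); two applications of Theorem \ref{He_Chang} then yield $q_{\min}(K_2[P_n,P_m])\ge 4-2=2$, after which \eqref{line_eig_snles_eqn} and Corollary \ref{Cor_Iter_line-2} finish as you describe. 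So the parity obstruction you identify is real only if one insists on regular spanning \emph{sub}graphs; passing to a regular \emph{super}graph whose bound has a surplus of $2$ sidesteps it entirely, and is both simpler and what the paper does.
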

\begin{proof}
	Let $ G = K_2 $, $ H_1 = C_{n+1} $ and $ H_2 = C_{m+1} $ for $ m, n \ge 3 $ in Theorem \ref{vertex_del}. Let $ v_1 \in H_1 $ and $ v_2 \in H_2 $. By deleting the vertices $ v_1 $ and $ v_2 $ along with the edges incident to them in $ H_1 $ and $ H_2 $ respectively, we obtain $ \Gamma - \{v_1, v_2\} = K_2[P_n, P_m] $. Therefore, the graphs $ \mathcal{L}^k(K_2[P_n, P_m]) $ satisfy the property $ \rho $ for $ k \ge 1 $.
\end{proof}

\begin{definition}\cite{Gutman_Kn}
	Let $ v $ be a vertex of the complete graph $ K_n $ where $ n \ge 3 $, and let $ e_i $ for $ i = 1, \ldots, p $, $ 1 \le p \le n-1 $ be its distinct edges all incident to $ v $. The graph $ Ka_n(p) $ is obtained by deleting the edges $ e_i $ for $ i = 1, \ldots, p $ from $ K_n $. Note that $ Ka_n(0) = K_n $.
	
\end{definition}

\noindent With this notation we have the following result.
\begin{theorem}\label{ver_del_Kn}
	If $n\ge 6$ then the graphs $\mathcal{L}^k\big(Ka_n(p)\big)$, $1\le p\le n-4$   satisfy the property $\rho$ for $k\ge 1$.
\end{theorem}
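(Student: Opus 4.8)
The plan is to handle the first iterate $k=1$ and the higher iterates $k\ge 2$ by different means. For $k\ge 2$ the work is essentially already done: in $Ka_n(p)$ the vertex $v$ has degree $n-1-p\ge 3$ (because $p\le n-4$), while every other vertex has degree $n-2$ or $n-1$, so the minimum degree of $Ka_n(p)$ is at least $3$. Corollary \ref{Cor_Iter_line-2} then immediately gives that $\mathcal{L}^k\big(Ka_n(p)\big)$ satisfies $\rho$ for all $k\ge 2$. Thus the entire content of the theorem lies in the case $k=1$, and by relation \eqref{line_eig_snles_eqn} it suffices to prove that $q_{\min}\big(Ka_n(p)\big)\ge 2$, for then every negative eigenvalue of $\mathcal{L}\big(Ka_n(p)\big)$ equals $-2$.

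My first move is to reduce to a single extremal graph. Taking the deleted edges at $v$ to be nested, $Ka_n(n-4)$ is a spanning subgraph of $Ka_n(p)$ for every $1\le p\le n-4$, so Proposition \ref{d_Lima} yields $q_{\min}\big(Ka_n(p)\big)\ge q_{\min}\big(Ka_n(n-4)\big)$. Hence it is enough to prove $q_{\min}\big(Ka_n(n-4)\big)\ge 2$, i.e. to treat the graph in which $v$ has the least admissible degree $3$. The decisive structural remark is that this graph is an $H$-join: if $B$ denotes the three neighbours of $v$ and $C$ its $n-4$ non-neighbours, then $Ka_n(n-4)=P_3[K_1,K_3,K_{n-4}]$, with the trivial block $K_1$ at an endpoint, $K_3$ at the centre and $K_{n-4}$ at the other endpoint of the path $P_3$.

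I would then invoke Theorem \ref{SpecQ}, in the form of Proposition \ref{HSpecQeql}, for the equitable partition $\pi=(\{v\},B,C)$. This splits $Sp_Q\big(Ka_n(n-4)\big)$ into the within-block eigenvalues, which a routine computation shows to be $n-2$ (coming from $K_3$) and $n-3$ (coming from $K_{n-4}$), and the three eigenvalues of the $3\times 3$ quotient matrix $Q_\pi$. The within-block eigenvalues are $\ge 2$ for $n\ge 6$ and cause no trouble, so the whole difficulty is to show that the smallest eigenvalue of $Q_\pi$ is at least $2$. Here I emphasise the main obstacle: Lemma \ref{leasteigQpi}, which would deliver exactly this bound, is \emph{not} applicable, because the block $K_1$ is $0$-regular and violates the standing hypothesis $r_i\ge 1$; the endpoint block of the join is genuinely special and must be analysed by hand.

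To finish I would bound the eigenvalues $\mu_1\le\mu_2\le\mu_3$ of $Q_\pi$ directly through its symmetric functions, which are easy to read off: the trace is $3n-3$, the sum of the $2\times 2$ principal minors is $2n^2+n-16$, the determinant is $6(n-2)(n-3)$, and evaluating the characteristic polynomial at $2$ gives $\varphi(Q_\pi;2)=-2(n-4)(n-6)$. Passing to the shifted values $\nu_i=\mu_i-2$, these data become the elementary symmetric functions $e_1=3(n-3)$, $e_2=2n^2-11n+8$ and $e_3=2(n-4)(n-6)$, every one of which is non-negative for $n\ge 6$. Since the $\mu_i$ lie in the real, non-negative spectrum of a positive semidefinite matrix, the $\nu_i$ are real, and a cubic with real roots and non-negative elementary symmetric functions can have no negative root; therefore $\nu_i\ge 0$, that is $\mu_1\ge 2$. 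Combining this with the within-block eigenvalues gives $q_{\min}\big(Ka_n(n-4)\big)\ge 2$, the spanning-subgraph reduction propagates it to all $1\le p\le n-4$, and relation \eqref{line_eig_snles_eqn} then shows $\mathcal{L}\big(Ka_n(p)\big)$ satisfies $\rho$, which together with the case $k\ge 2$ completes the proof.
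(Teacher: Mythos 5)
Your proposal is correct, and its skeleton coincides with the paper's: the same reduction of all $p$ to the extremal graph $Ka_n(n-4)$ via Proposition \ref{d_Lima}, the same identification $Ka_n(n-4)=P_3[K_1,K_3,K_{n-4}]$, the same use of Proposition \ref{HSpecQeql} to split $Sp_Q\big(Ka_n(n-4)\big)$ into the within-block eigenvalues $(n-2)^2,(n-3)^{n-5}$ and $Sp(Q_\pi)$, and the same finish via relation \eqref{line_eig_snles_eqn} and Corollary \ref{Cor_Iter_line-2}. The only genuine divergence is the final step: the paper computes $Sp(Q_\pi)$ explicitly, finding the eigenvalues $n-2$ and $n-\tfrac{1}{2}\pm\tfrac{1}{2}\sqrt{4n(n-7)+73}$, and checks that the smallest is at least $2$, whereas you shift by $2$ and run a sign argument on the characteristic polynomial of $Q_\pi-2I$ (real roots plus non-negative elementary symmetric functions force all roots non-negative). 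Both are valid; I verified your data $e_1=3(n-3)$, $e_2=2n^2-11n+8$, $e_3=2(n-4)(n-6)$ against the quotient matrix and they are right. Your variant buys two small things: it avoids solving the cubic, and the factor $(n-6)$ in $e_3$ pinpoints exactly where $n\ge 6$ is needed — the paper at this point claims the smallest eigenvalue of $Q_\pi$ is at least $2$ ``if $n\ge 5$,'' which is in fact false at $n=5$ (there $Ka_5(1)=T_4(5)$ with $q_{\min}\approx 1.6277$, as the paper's own Tur\'{a}n remark records), a harmless slip since the theorem assumes $n\ge 6$. Your explicit observation that Lemma \ref{leasteigQpi} cannot be invoked because the block $K_1$ has $r_1=0$ is also apt; the paper sidesteps that lemma silently by direct computation.
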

\begin{proof}
	All the graphs of order up to 5 whose line graphs satisfy the property $ \rho $ are $ C_4 $, $ K_4 $, $ K_{3,2} $ and $ K_5 $ according to Theorem \ref{min_order_thm}. It is noted that none of these graphs are of the type $ Ka_n(p) $ for $ p \ge 1 $. If $ n \ge 6 $, the graph $ Ka_n(n-4) $ can be expressed as $ P_3[K_1, K_3, K_{n-4}] $. The quotient matrix $ Q_\pi $ of $ Ka_n(n-4) $ is given by
	\[
	Q_\pi = \begin{bmatrix}
		3   & 3   &   0 \\
		1   & n+1   &  n-4 \\
		0   & 3   &   2n-7 
	\end{bmatrix}
	\]
	with its spectrum $$ Sp(Q_\pi) = \left\{n-\frac{1}{2}+\frac{1}{2}\sqrt{4n(n-7)+73}, n-2, n-\frac{1}{2}-\frac{1}{2}\sqrt{4n(n-7)+73}\right\}. $$ The signless Laplacian spectrum of $ Ka_n(n-4) $ is 
	\[
	Sp_Q\big(Ka_n(n-4)\big) = \{ (n-2)^2,  (n-3)^{n-5}\} \cup Sp(Q_\pi).
	\]
	It is clear that all the signless Laplacian eigenvalues of $ Ka_n(n-4) $ are greater than or equal to $ 2 $, except for the concern about the third eigenvalue of $ Sp(Q_\pi) $ when $ n \ge 6 $. However, this eigenvalue $ n-\frac{1}{2}-\frac{1}{2}\sqrt{4n(n-7)+73} \ge 2 $ if $ n \ge 5 $. Thus, $ \mathcal{L}\big(Ka_n(p)\big) $ for $ 1 \le p \le n-4 $ satisfies the property $ \rho $ by using Proposition \ref{d_Lima} and the relation \eqref{line_eig_snles_eqn}. 
	It is also easy to observe that the minimum degree of $ Ka_n(p) $ for $ 1 \le p \le n-4 $ is at least $ 3 $. Hence by Corollary \ref{Cor_Iter_line-2}, all the iterated line graphs $ \mathcal{L}^k(Ka_n(p)) $ for $ k \ge 1 $ and $ 1 \le p \le n-4 $ satisfy the property $ \rho $.
	
\end{proof} 
There are certain classes of graphs with a least eigenvalue of $-2$, such as exceptional graphs and generalized line graphs \cite{Cvetkovic_Sgen}. If the minimum degree $ \delta \ge 4 $ in these classes of graphs, we have the following simple result.

\begin{theorem}\label{min_deg4}
	Let $G$ be a graph with least eigenvalue $-2$ and the minimum degree $\delta\ge 4$. Then the  iterated line graphs $\mathcal{L}^k(G)$ satisfy the property $\rho$ for  $k\ge 1$.
\end{theorem}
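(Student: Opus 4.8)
The plan is to split the range $k \ge 1$ into the base case $k = 1$ and the tail $k \ge 2$, because the tail is already delivered by Corollary \ref{Cor_Iter_line-2} as soon as the minimum degree is at least $3$, whereas the first passage from $G$ to $\mathcal{L}(G)$ needs a short separate argument. The reason for the separation is that $G$ is only assumed to have least eigenvalue $-2$ and is \emph{not} assumed to be a line graph, so relation \eqref{line_eig_snles_eqn} cannot be read off for $G$ directly; the bridge from the adjacency spectrum to the signless Laplacian spectrum is Proposition \ref{Desai_prop}.

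For $k = 1$ I would argue as follows. Since $G$ has least adjacency eigenvalue $\lambda_{min} = -2$ and minimum degree $\delta \ge 4$, the right-hand inequality of Proposition \ref{Desai_prop}, namely $\lambda_{min} \le q_{min} - \delta$, yields
$$q_{min} \ge \lambda_{min} + \delta \ge -2 + 4 = 2,$$
so every signless Laplacian eigenvalue of $G$ lies in $[2,\infty)$. Applying relation \eqref{line_eig_snles_eqn}, we have $Sp_A(\mathcal{L}(G)) = \{-2^{m-n}\} \cup (Sp_Q(G) - 2)$, and since $Sp_Q(G) - 2 \subseteq [0,\infty)$, the only negative eigenvalue of $\mathcal{L}(G)$ is $-2$. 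Hence $\mathcal{L}(G)$ satisfies the property $\rho$, which settles $k = 1$.

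For $k \ge 2$ I would simply observe that the hypothesis $\delta \ge 4$ gives in particular $\delta \ge 3$, so $G$ meets the hypotheses of Corollary \ref{Cor_Iter_line-2}; that corollary then yields directly that $\mathcal{L}^k(G)$ satisfies the property $\rho$ for every $k \ge 2$. Combining the two ranges gives the conclusion for all $k \ge 1$. I do not expect a genuine obstacle here, as the statement is flagged as simple; the one point that must not be glossed is exactly why $k = 1$ needs separate treatment. Corollary \ref{Cor_Iter_line-2} only asserts its conclusion from $k = 2$ onward, and its derivation uses relation \eqref{line_eig_snles_eqn} where the passage from the signless Laplacian to the adjacency spectrum of a line graph is automatic. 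Because $G$ itself is merely a graph of least eigenvalue $-2$, the first passage to $\mathcal{L}(G)$ is the sole place where Proposition \ref{Desai_prop} is invoked to guarantee $q_{min}(G) \ge 2$; once $\mathcal{L}(G)$ is in hand (with minimum degree $d_u + d_v - 2 \ge 6$ on each of its vertices), everything reduces to the earlier iterated-line-graph machinery.
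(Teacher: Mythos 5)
Your proposal is correct and follows essentially the same route as the paper: Proposition \ref{Desai_prop} gives $q_{min}(G)\ge \lambda_{min}+\delta\ge 2$, relation \eqref{line_eig_snles_eqn} then settles $k=1$, and Corollary \ref{Cor_Iter_line-2} (applicable since $\delta\ge 4\ge 3$) covers $k\ge 2$. Your write-up is in fact slightly more careful than the paper's, which compresses the last step into a single sentence claiming the corollary yields all $k\ge 1$, whereas you correctly note it only supplies $k\ge 2$ and that the $k=1$ case is exactly what the Desai--Rao step is for.
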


\begin{proof}
	The least eigenvalues $\lambda_{min}, q_{min}$ and the minimum degree $\delta$ of $G$ by Proposition \ref{Desai_prop} satisfy $q_{min}\ge \lambda_{min}+\delta\ge -2+4=2$. Therefore, by using the relation \eqref{line_eig_snles_eqn} $\mathcal{L}(G)$ satisfies the property $\rho$.  Now, by using 
	Corollary \ref{Cor_Iter_line-2} all the iterated line graphs $\mathcal{L}^k(G)$  satisfy the property $\rho$ for $k\ge 1$.
\end{proof}

\noindent Theorem \ref{Kinkar} of Das et al. can be extended to the iterated line graphs.

\begin{theorem}\label{Kinkar_ite_lin}
	Let $G$ be a graph of order $n_0( > 2)$ and size $m_0$ with the minimum degree $\delta\ge \frac{n_0}{2} + 1$. Then the graphs $\mathcal{L}^k(G)$  satisfy the property $\rho$ for $k\ge 1$.
\end{theorem}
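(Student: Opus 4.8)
The plan is to split the range $k\ge 1$ into the base case $k=1$ and the tail $k\ge 2$, and to dispose of each by invoking a result already in hand, so that no fresh eigenvalue computation is needed.

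For $k=1$ I would appeal directly to Theorem \ref{Kinkar}. Since $G$ has order $n_0>2$, size $m_0$, and minimum degree $\delta\ge \frac{n_0}{2}+1$, that theorem already asserts that the line graph $\mathcal{L}(G)=\mathcal{L}^1(G)$ satisfies the property $\rho$ (indeed with energy $4(m_0-n_0)$). Thus the base case is immediate and carries no new content; it is precisely the threshold $\frac{n_0}{2}+1$ of Das et al.\ that is responsible for the very first iterate.

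For $k\ge 2$ the crucial observation is that the same hypothesis forces $\delta\ge 3$. Indeed, $n_0>2$ gives $n_0\ge 3$, whence $\delta\ge \frac{n_0}{2}+1\ge \frac{5}{2}$, and because a vertex degree is an integer this yields $\delta\ge 3$. Having reduced the hypothesis to minimum degree at least $3$, I would then apply Corollary \ref{Cor_Iter_line-2} verbatim: it states that $\mathcal{L}^k(G)$ satisfies the property $\rho$ for every $k\ge 2$ whenever $\delta\ge 3$. Combining this with the $k=1$ case settles all $k\ge 1$.

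I do not anticipate a genuine obstacle: the substantive work is absorbed into the two cited results, and the theorem is really a statement about the interplay of their two degree thresholds, namely $\frac{n_0}{2}+1$ (needed to start the line-graph tower via Theorem \ref{Kinkar}) and $3$ (needed to propagate property $\rho$ through the iterates via Corollary \ref{Cor_Iter_line-2}). The only point demanding a moment's care is the integer-rounding implication $\delta\ge\frac{n_0}{2}+1\Rightarrow\delta\ge 3$ under $n_0\ge 3$; once that is recorded the argument closes at once, with the signless-Laplacian translation \eqref{line_eig_snles_eqn} implicitly underlying both cited statements.
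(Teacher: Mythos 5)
Your proposal is correct and essentially identical to the paper's own proof: invoke Theorem \ref{Kinkar} for $k=1$, deduce $\delta\ge 3$ from the hypothesis, and apply Corollary \ref{Cor_Iter_line-2} for $k\ge 2$. The only trivial difference is how $\delta\ge 3$ is obtained --- you round $\delta\ge \frac{n_0}{2}+1\ge \frac{5}{2}$ up by integrality, while the paper first notes $n_0\ge 4$ (implicitly using $\delta\le n_0-1$) and then concludes $\delta\ge \frac{n_0}{2}+1\ge 3$; both derivations are valid.
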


\begin{proof}
	If $G$ is a graph of order $n_0 > 2$ and size $m_0$ with the minimum degree $\delta\ge \frac{n_0}{2} + 1$, then by Theorem \ref{Kinkar}, $\mathcal{L}(G)$ satisfies the property $\rho$. The existence of a graph $G$ with the minimum degree $\delta\ge \frac{n_0}{2} + 1$ implies $n_0\ge 4$, that is,  the minimum degree of $G$ is at least $3$. Therefore, by applying 
	Corollary \ref{Cor_Iter_line-2}, all the iterated line graphs $\mathcal{L}^k(G)$  satisfy the property $\rho$ for $k\ge 1$.
\end{proof}

\noindent The following inequality was given by Leonardo de Lima et al. in \cite{Leonardo} for Tur\'{a}n graph.
\begin{equation*}
	(r-2)\Big\lfloor \frac{n}{r}\Big\rfloor<q_{min}\big(T_r(n)\big)\le \Big(1-\frac{1}{r}\Big)n.
\end{equation*}

\begin{note}
	The above inequality is not valid when $r=3$, $n=6, 7, 8$ and   $r=4$, $n=5$ as seen from the following spectral values:  $Sp_Q\big(T_3(6)\big)=\{8, 4^3, 2^2\}$,  $Sp_Q\big(T_3(7)\big)=\{9.2745, 5^{2}, 4^{2}, 3, 1.7251\}$,  $Sp_Q\big(T_3(8)\big)=\{10.6056, 6, 5^4, 3.3944, 2\}$ and $Sp_Q\big(T_4(5)\big)=\{7.3723, 3^3, 1.6277\}$. These values demonstrate that $q_{min}\big(T_3(6)\big)=2, q_{min}\big(T_3(7)\big)=1.7251$, $q_{min}\big(T_3(8)\big)=2 $  and $q_{min}\big(T_4(5)\big)=1.6277$, but the inequality gives strict lower bound $2$ for $q_{min}\big(T_r(n)\big)$. However, with the help of this  inequality we have the following result.
\end{note}

\begin{proposition}\label{Turan_itera}
	If $r = 3$ and $n\ge 6$; $n\ne 7$ or $r=4$ and $n\ne 5$  or $r\ge 5$, then the graphs $\mathcal{L}^k\big(T_r(n)\big)$  satisfy the property $\rho$ for $k\ge 1$.
\end{proposition}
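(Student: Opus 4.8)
The plan is to reduce the statement to two facts about the Turán graph itself (throughout taking $n\ge r$, so that $T_r(n)$ is genuinely $r$-partite): that $q_{min}(T_r(n))\ge 2$, and that its minimum degree is at least $3$. For the first, relation \eqref{line_eig_snles_eqn} says that $Sp_A(\mathcal{L}(T_r(n)))$ consists of $Sp_Q(T_r(n))-2$ together with copies of $-2$; once $q_{min}(T_r(n))\ge 2$ every shifted eigenvalue is nonnegative, so the only negative eigenvalues of $\mathcal{L}^1(T_r(n))$ are the $-2$'s, i.e. $\mathcal{L}(T_r(n))$ has property $\rho$. For the second, I would use $\delta(T_r(n))=n-\lceil n/r\rceil$ (a vertex in a part of size $s$ has degree $n-s$); Corollary \ref{Cor_Iter_line-2} then delivers property $\rho$ for all $\mathcal{L}^k(T_r(n))$ with $k\ge 2$. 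Combining the two ranges covers $k\ge 1$, which is exactly the assertion.

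The minimum-degree step is routine: for $r=3,\,n\ge 6$ one has $n-\lceil n/3\rceil\ge (2n-2)/3\ge 10/3$, hence $\delta\ge 4$, while for $r\ge 4$ and $n\ge r$ one has $\delta=n-\lceil n/r\rceil\ge r-1\ge 3$. So the real content is the estimate $q_{min}(T_r(n))\ge 2$, and for this I would lean on the de Lima et al. inequality $(r-2)\lfloor n/r\rfloor<q_{min}(T_r(n))$ displayed just above. Whenever that inequality is valid it suffices to observe that its left-hand side is at least $2$: indeed $(r-2)\lfloor n/r\rfloor\ge 3$ for $r\ge 5$, while $2\lfloor n/4\rfloor\ge 2$ once $n\ge 4$ and $\lfloor n/3\rfloor\ge 2$ once $n\ge 6$. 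In each such case the inequality forces $q_{min}(T_r(n))>2$, and property $\rho$ follows.

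The hard part is that the inequality is not always valid: as the preceding remark records, it fails exactly for $r=3,\,n\in\{6,7,8\}$ and $r=4,\,n=5$. Two of these failures are genuine obstructions, since $q_{min}(T_3(7))=1.7251$ and $q_{min}(T_4(5))=1.6277$ lie below $2$, so $\mathcal{L}(T_3(7))$ and $\mathcal{L}(T_4(5))$ cannot have property $\rho$; this is exactly why the hypothesis drops $n=7$ for $r=3$ and $n=5$ for $r=4$. The remaining admitted exceptional cases I would settle by hand using the explicit spectra in the remark: $Sp_Q(T_3(6))=\{8,4^3,2^2\}$ and $Sp_Q(T_3(8))=\{10.6056,6,5^4,3.3944,2\}$ both have $q_{min}=2$, and the borderline $r=4,\,n=4$ is just $K_4$, for which $Sp_Q(K_4)=\{6,2^3\}$ gives $q_{min}=2$ as well. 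With $q_{min}(T_r(n))\ge 2$ secured in every case allowed by the hypothesis, the reduction of the first paragraph completes the proof.
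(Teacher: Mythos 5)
Your proposal takes the same route as the paper's own proof: the displayed de~Lima inequality to force $q_{\min}(T_r(n))\ge 2$, relation \eqref{line_eig_snles_eqn} to get property $\rho$ at $k=1$, and the minimum-degree bound $\delta(T_r(n))\ge 3$ with Corollary \ref{Cor_Iter_line-2} for $k\ge 2$. You are in fact more careful than the paper, whose proof says only that ``the condition on $r$, $n$ in the hypothesis and the above inequality ensure'' $q_{\min}\ge 2$: you check the exceptional cases $r=3$, $n\in\{6,8\}$ against the spectra recorded in the remark, and you even catch $r=4$, $n=4$, where $q_{\min}(K_4)=2$ equals the bound $(r-2)\lfloor n/r\rfloor$ so the strict inequality fails --- a case absent from the paper's list.

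Nevertheless there is a genuine gap, one your argument shares with the published proof. Your case analysis rests on the claim that the inequality ``fails exactly for $r=3$, $n\in\{6,7,8\}$ and $r=4$, $n=5$''; the remark asserts failure in those cases but not exactness, and exactness is false. Whenever $r\mid n$ the Tur\'an graph is the regular complete multipartite graph $K_{s,\ldots,s}$ with $s=n/r$, whose signless Laplacian spectrum is $\{2(r-1)s,\,((r-1)s)^{n-r},\,((r-2)s)^{r-1}\}$, so $q_{\min}=(r-2)\lfloor n/r\rfloor$ exactly and the strict inequality fails for every such $n$. Worse, for $r=3$, $n=10$ the quotient matrix of $K_{4,3,3}$ gives $q_{\min}(T_3(10))=8-2\sqrt{7}\approx 2.71<3=(r-2)\lfloor 10/3\rfloor$, so even the non-strict form fails outside your list; the same happens for every $n\equiv 1\pmod 3$, and for $n\equiv 2\pmod 3$ the integer $\lfloor n/3\rfloor$ is itself a $Q$-eigenvalue (quotient eigenvector $(1,-1,0)$ on the two larger parts), so for $r=3$ the quoted strict bound never holds at all. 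In all of these extra cases $q_{\min}\ge 2$ remains true, so the proposition survives, but the scheme ``use the inequality where valid, plus four sporadic checks'' does not cover the hypothesis range, for you or for the paper. A sound proof must verify $q_{\min}(T_r(n))\ge 2$ directly, which the paper's own machinery makes easy: computing $Sp_Q$ of the complete multipartite graph via its equitable partition (Proposition \ref{HSpecQeql}) gives, for $r=3$, $q_{\min}=\lfloor n/3\rfloor$ when $n\equiv 0,2\pmod 3$ and $q_{\min}=\tfrac{1}{2}\bigl(5s+1-\sqrt{(9s+1)(s+1)}\bigr)$ with $s=\lfloor n/3\rfloor$ when $n\equiv 1\pmod 3$, and the latter is $\ge 2$ precisely when $s\ge 3$; this simultaneously explains why $n=7$ must be excluded and closes the gap, with analogous computations for $r\ge 4$.
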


\begin{proof}
	The condition on $r$ and $n$ in the hypothesis together with the above inequality guarantees that $q_{\text{min}}(T_r(n))$ is at least $2$. As a result, $\mathcal{L}(T_r(n))$ satisfies the property $\rho$ by applying the relation \eqref{line_eig_snles_eqn}. Moreover, it is evident that the minimum degree of $T_r(n)$ is at least $3$. Consequently, by Corollary \ref{Cor_Iter_line-2} all iterated line graphs $\mathcal{L}^k(G)$ satisfy the property $\rho$ for $k \geq 1$.
	
\end{proof}

\subsection{\bf Iterated regular line graphs with property $\rho$}
Most of the results discussed so far involve non-regular iterated line graphs that satisfy the property $\rho$. We obtain iterated regular line graphs $\mathcal{L}^k(G)$ that satisfy the property $\rho$ from Theorem \ref{Iter_line-2} for $k \geq 2$ and from Theorem \ref{lineofjoin}, Theorem \ref{min_deg4} and Theorem \ref{Kinkar_ite_lin} for $k \geq 1$. In Proposition, \ref{Turan_itera} if $r$ divides $n$, we also get iterated regular line graphs $\mathcal{L}^k(G)$ that satisfy the property $\rho$ for $k \geq 1$. Here, we present additional iterated regular line graphs $\mathcal{L}^k(G)$ for $k \geq 1$ by taking regular graphs $G$.

\begin{theorem}\label{reg1_ite_lin}
	If $G$ is an $r$-regular graph of order $n$ with $3\le r\le \frac{n-1}{3}$, then  the graphs $\mathcal{L}^k(\overline{G})$ satisfy the property $\rho$ for $k\ge 1$.
\end{theorem}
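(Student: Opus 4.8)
The plan is to reduce the $k=1$ case to a single bound on the least signless Laplacian eigenvalue of $\overline{G}$, and then bootstrap to $k\ge 2$ via Corollary \ref{Cor_Iter_line-2}. Since $G$ is $r$-regular, Theorem \ref{reg_compl} gives that $\overline{G}$ is $(n-r-1)$-regular with adjacency spectrum $Sp_A(\overline{G})=\{n-r-1, -1-\lambda_n, \ldots, -1-\lambda_2\}$, where $\lambda_1=r\ge\lambda_2\ge\cdots\ge\lambda_n$ are the eigenvalues of $G$. Because $\overline{G}$ is regular, its signless Laplacian is $Q(\overline{G})=(n-r-1)I+A(\overline{G})$, so $Sp_Q(\overline{G})=(n-r-1)+Sp_A(\overline{G})$ and in particular $q_{min}(\overline{G})=(n-r-1)+\lambda_{min}(\overline{G})$. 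By relation \eqref{line_eig_snles_eqn}, establishing $q_{min}(\overline{G})\ge 2$ forces every eigenvalue of $\mathcal{L}(\overline{G})$ other than $-2$ to be non-negative, i.e.\ $\mathcal{L}(\overline{G})$ satisfies the property $\rho$.

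Next I would carry out the eigenvalue estimate. Since $n-r-1$ is the largest (Perron) eigenvalue of the $(n-r-1)$-regular graph $\overline{G}$, its least eigenvalue comes from the largest non-Perron eigenvalue $\lambda_2$ of $G$, namely $\lambda_{min}(\overline{G})=-1-\lambda_2$; hence $q_{min}(\overline{G})=(n-r-1)+(-1-\lambda_2)=n-r-2-\lambda_2$. Using only the elementary bound $\lambda_2\le\lambda_1=r$, valid for any $r$-regular graph, we obtain $q_{min}(\overline{G})\ge n-2r-2$. The hypothesis $r\le\frac{n-1}{3}$ gives $n\ge 3r+1$, so $q_{min}(\overline{G})\ge(3r+1)-2r-2=r-1\ge 2$ because $r\ge 3$. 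This settles the $k=1$ case.

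Finally, to reach $k\ge 2$ I would verify the degree hypothesis of Corollary \ref{Cor_Iter_line-2}. The graph $\overline{G}$ is $(n-r-1)$-regular, and $n-r-1\ge(3r+1)-r-1=2r\ge 6\ge 3$, so $\overline{G}$ has minimum degree at least $3$. Corollary \ref{Cor_Iter_line-2} then yields that $\mathcal{L}^k(\overline{G})$ satisfies the property $\rho$ for all $k\ge 2$. Combining this with the $k=1$ case completes the proof for all $k\ge 1$.

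The computation is routine; the only point requiring care is the least-eigenvalue bound, and here the crude estimate $\lambda_2\le r$ is already enough precisely because the admissible range $3\le r\le\frac{n-1}{3}$ keeps the complement $\overline{G}$ dense enough. I do not expect a genuine obstacle; if one wished to enlarge the range of $r$ (relative to $n$), the bottleneck would be replacing $\lambda_2\le r$ by a sharper bound on the second-largest eigenvalue of $G$.
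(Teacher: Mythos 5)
Your proof is correct and takes essentially the same approach as the paper: both reduce the $k=1$ case to the inequality $n-r-\lambda_2-4\ge 0$ (using Theorem \ref{reg_compl}, the crude bound $\lambda_2\le r$, and $n\ge 3r+1$ with $r\ge 3$), and both then invoke Corollary \ref{Cor_Iter_line-2} for $k\ge 2$ since $\overline{G}$ is $(n-r-1)$-regular with $n-r-1\ge 2r\ge 6$. The only cosmetic difference is that you route the $k=1$ computation through $q_{min}(\overline{G})=n-r-2-\lambda_2\ge 2$ and relation \eqref{line_eig_snles_eqn}, whereas the paper writes out $Sp_A\big(\mathcal{L}(\overline{G})\big)$ explicitly via Theorem \ref{line_eigv}; for regular graphs these amount to the same inequality.
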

\begin{proof}
	Let $Sp_A(G)=\{r, \lambda_2^{m_2}, \ldots, \lambda_t^{m_t}\}$ such that $1+\sum\limits_{i=2}^{t}m_i=n$. Then, by Theorem \ref{reg_compl},  $\overline{G}$ is also a regular graph with  $Sp_A(\overline{G})=\{n-r-1, (-1-\lambda_t)^{m_t}, \ldots, (-1-\lambda_2)^{m_2}\}$ and by Theorem \ref{line_eigv}, $Sp_A(\mathcal{L}(\overline{G}))=\{2(n-1)-2r-2, (n-r-\lambda_t-4)^{m_t},  \ldots, (n-r-\lambda_2-4)^{m_2}, -2^{n(n-r-3)/2}\}$. We shall prove that all eigenvalues of $\mathcal{L}(\overline{G})$, except $-2$ are non negative.  Since $G$ is regular, $\mathcal{L}(\overline{G})$ is also regular with degree $2(n-1)-2r-2$, which implies  $2(n-1)-2r-2>0$. The condition $r\le \frac{n-1}{3}$ gives  $n\ge 3r+1$ which implies $n-r-\lambda_i-4\ge 2r-\lambda_i-3\ge 0$ as $r\ge 3$ to each $i=2, \ldots, t$. Hence $\mathcal{L}(\overline{G})$ satisfies the property $\rho$. Additionally, $n\ge 3r+1$ implies $n-r-1\ge 2r\ge 6$ which shows that $\overline{G}$ is a regular graph with a minimum degree $\ge 6$. This implies that the graphs $\mathcal{L}^k(\overline{G})$ satisfy the property $\rho$ to each $k\ge 1$ by Corollary \ref{Cor_Iter_line-2}.
\end{proof}

\begin{theorem}\label{reg2_ite_lin}
	If $G$ is an $r$-regular graph of order $n\ge 8$ and $r\ge 1$, then the graphs  $\mathcal{L}^k\big(\overline{\mathcal{L}(G)}\big)$, $k\ge 1$ satisfy the property $\rho$.
\end{theorem}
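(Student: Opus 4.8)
The plan is to follow the template of Theorem \ref{reg1_ite_lin}: compute the adjacency spectrum of $\overline{\mathcal{L}(G)}$ explicitly, read off its least signless Laplacian eigenvalue, show this is at least $2$ precisely when $n\ge 8$, and then invoke Corollary \ref{Cor_Iter_line-2} to pass to all higher iterates. Throughout I write $Sp_A(G)=\{r,\lambda_2,\ldots,\lambda_n\}$ and $m=nr/2$ for the size of $G$ (the order of $\mathcal{L}(G)$).

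First I would record the spectra. By Theorem \ref{line_eigv}, $\mathcal{L}(G)$ is $(2r-2)$-regular of order $m$ with $Sp_A(\mathcal{L}(G))=\{2r-2,(\lambda_i+r-2)_{i=2}^{n},-2^{\,m-n}\}$. Since $\mathcal{L}(G)$ is regular, Theorem \ref{reg_compl} applies, so $\overline{\mathcal{L}(G)}$ is regular of degree $\delta:=m-2r+1$ with
\[
Sp_A\big(\overline{\mathcal{L}(G)}\big)=\big\{\,m-2r+1,\ (1-r-\lambda_i)_{i=2}^{n},\ 1^{\,m-n}\,\big\}.
\]
Because $\overline{\mathcal{L}(G)}$ is $\delta$-regular we have $Q=\delta I+A$, hence $q_{min}\big(\overline{\mathcal{L}(G)}\big)=\delta+\lambda_{min}\big(\overline{\mathcal{L}(G)}\big)$. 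The family $1-r-\lambda_i$ is minimized at $i=2$ (largest $\lambda_i$), and since $\lambda_2\ge -r$ one checks $1-r-\lambda_2\le 1$, so $\lambda_{min}\big(\overline{\mathcal{L}(G)}\big)=1-r-\lambda_2$ and therefore
\[
q_{min}\big(\overline{\mathcal{L}(G)}\big)=(m-2r+1)+(1-r-\lambda_2)=m-3r+2-\lambda_2.
\]

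The crux, and essentially the only real step, is to show this quantity is at least $2$. Here I would use the standard bound $\lambda_2\le r$ for the second eigenvalue of an $r$-regular graph together with $m=nr/2$, which gives $q_{min}\big(\overline{\mathcal{L}(G)}\big)\ge m-4r+2 = r(n-8)/2+2\ge 2$ exactly for $n\ge 8$. This is where the hypothesis $n\ge 8$ enters sharply, and I expect it to be the main point to get right (in particular, correctly identifying $1-r-\lambda_2$, rather than $1$, as the least eigenvalue of the complement, and bounding $\lambda_2$ by $r$).

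Finally, the relation \eqref{line_eig_snles_eqn} applied to $H=\overline{\mathcal{L}(G)}$ with $q_{min}(H)\ge 2$ shows that every eigenvalue of $\mathcal{L}\big(\overline{\mathcal{L}(G)}\big)$ other than $-2$ is non-negative, so $\mathcal{L}\big(\overline{\mathcal{L}(G)}\big)$ satisfies the property $\rho$; this settles the case $k=1$. For the higher iterates I would note that $\overline{\mathcal{L}(G)}$ is regular of degree $\delta=r(n-4)/2+1\ge 3$ for $n\ge 8$, so Corollary \ref{Cor_Iter_line-2} yields that $\mathcal{L}^k\big(\overline{\mathcal{L}(G)}\big)$ satisfies $\rho$ for all $k\ge 2$. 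Combining the two ranges gives the property $\rho$ for every $k\ge 1$, completing the argument.
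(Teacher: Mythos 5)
Your proof is correct and takes essentially the same route as the paper: both compute $Sp_A\big(\overline{\mathcal{L}(G)}\big)$ via Theorems \ref{line_eigv} and \ref{reg_compl}, reduce the crux to the bound $\lambda_i\le r\le (n-6)r/2$ (where $n\ge 8$ enters), and then handle $k\ge 2$ through Corollary \ref{Cor_Iter_line-2} using $\delta\big(\overline{\mathcal{L}(G)}\big)=r(n-4)/2+1\ge 3$. The only cosmetic difference is that the paper applies Theorem \ref{line_eigv} a second time to write out $Sp_A\big(\mathcal{L}\big(\overline{\mathcal{L}(G)}\big)\big)$ explicitly, whereas you pass through $q_{min}\big(\overline{\mathcal{L}(G)}\big)\ge 2$ and relation \eqref{line_eig_snles_eqn}, which is an equivalent formulation for a regular graph.
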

\begin{proof}
	Let $Sp_A(G)=\{r, \lambda_2^{m_2}, \ldots, \lambda_t^{m_t}\}$ such that $1+\sum\limits_{i=2}^{t}m_i=n$. Then, by Theorem \ref{line_eigv}, $Sp_A(\mathcal{L}(G))=\{2r-2, (\lambda_2+r-2)^{m_2}, \ldots, (\lambda_t+r-2)^{m_t}, -2^{n(r-2)/2}\}$. Since $\mathcal{L}(G)$ is regular by Theorem \ref{reg_compl}, $\overline{\mathcal{L}(G)}$ also a regular graph with $Sp_A\big(\overline{\mathcal{L}(G)}\big)=\{nr/2-2r+1, 1^{n(r-2)/2},  (1-r-\lambda_t)^{m_t},  \ldots, (1-r-\lambda_2)^{m_2}\}$. Again, by Theorem \ref{line_eigv}, $Sp_A\big(\mathcal{L}\big(\overline{\mathcal{L}(G)}\big)\big) = \{r(n-4), ((n-4)r/2)^{n(r-2)/2}, ((n-6)r/2-\lambda_t)^{m_t}, \ldots, ((n-6)r/2-\lambda_2)^{m_2},  -2^{nr(nr-4r-2)/8}\}$. It is easy to see that all the eigenvalues of $\mathcal{L}\big(\overline{\mathcal{L}(G)}\big)$ are non-negative except $-2$ for $n\ge 8$. Hence,  $\mathcal{L}\big(\overline{\mathcal{L}(G)}\big)$ satisfies the property $\rho$. It is noted that the degree of $\overline{\mathcal{L}(G)}$ is $nr/2-2r+1$, which is at least $3$ for $n\ge 8$ and $r\ge 1$. This implies the graphs $\mathcal{L}^k\big(\overline{\mathcal{L}(G)}\big)$ satisfy the property $\rho$ for $k\ge 1$ by Corollary \ref{Cor_Iter_line-2}.
\end{proof}

\begin{remark}
	One can easily construct equienergetic graphs, similar to those in Theorem \ref{lineofjoin} for the results in Theorems \ref{vertex_del} to \ref{reg2_ite_lin}.
\end{remark}

\begin{theorem}\label{hyper_en_itLine}
	Let $G$ be a graph with  $d_u+d_v\ge 6$ to each edge $e=uv$ in  $G$. Then the graphs $\mathcal{L}^k(G)$ are hyperenergetic for  $k\ge 2$.
\end{theorem}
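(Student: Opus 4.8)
The plan is to exhibit $\mathcal{L}^k(G)$ as the line graph of the previous iterate, namely $\mathcal{L}^k(G)=\mathcal{L}\big(\mathcal{L}^{k-1}(G)\big)$, and then to invoke the Hou--Gutman criterion, Theorem \ref{hyper_en}. Applied to the graph $H:=\mathcal{L}^{k-1}(G)$ (which for $k\ge 2$ is a genuine iterate, $k-1\ge 1$), that criterion asserts that $\mathcal{L}(H)=\mathcal{L}^k(G)$ is hyperenergetic as soon as $H$ has order at least $5$ and size at least twice its order. Writing $n_{k-1}$ and $m_{k-1}$ for the order and size of $H$, the whole argument therefore reduces to verifying the two inequalities $n_{k-1}\ge 5$ and $m_{k-1}\ge 2n_{k-1}$.

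First I would track the minimum degree of the iterates. As recorded in the proof of Theorem \ref{Iter_line-2}, the vertex of $\mathcal{L}(G)$ corresponding to an edge $e=uv$ has degree $d_u+d_v-2$, so the hypothesis $d_u+d_v\ge 6$ forces the minimum degree of $\mathcal{L}(G)$ to be at least $4$. Moreover, passing to a line graph sends minimum degree $\delta$ to minimum degree at least $2\delta-2$, so once the minimum degree reaches $4$ it never drops below $4$ again; hence the minimum degree of $\mathcal{L}^{\,j}(G)$ is at least $4$ for every $j\ge 1$. In particular $H=\mathcal{L}^{k-1}(G)$ has minimum degree at least $4$ for all $k\ge 2$.

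The two required inequalities now follow immediately. A simple graph of minimum degree at least $4$ has at least $5$ vertices, giving $n_{k-1}\ge 5$; and the handshaking identity $2m_{k-1}=\sum_{v}\deg(v)\ge 4\,n_{k-1}$ gives $m_{k-1}\ge 2n_{k-1}$. Theorem \ref{hyper_en} then yields that $\mathcal{L}^k(G)$ is hyperenergetic for every $k\ge 2$. As a cross-check consistent with the rest of the paper, one may instead use Theorem \ref{Iter_line-2}: since $n_k=m_{k-1}$, the inequality $m_{k-1}\ge 2n_{k-1}$ reads $n_k\ge 2n_{k-1}$, which is equivalent to $4(n_k-n_{k-1})>2(n_k-1)$; the left side is the energy $\mathcal{E}(\mathcal{L}^k(G))$ supplied by Theorem \ref{Iter_line-2} and the right side is the energy $2(n_k-1)$ of the complete graph $K_{n_k}$, so this is exactly the definition of hyperenergeticity. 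I do not anticipate a real obstacle; the only step needing care is the monotonicity of the minimum degree under the line-graph operation, which is what guarantees that the size-to-order ratio stays at least $2$ at every stage and hence that the Hou--Gutman hypotheses persist for all $k\ge 2$.
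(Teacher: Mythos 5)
Your proposal is correct and follows essentially the same route as the paper: both track that the minimum degree of the iterates stays at least $4$, deduce $m_{k-1}\ge 2n_{k-1}$ by handshaking, and invoke Theorem \ref{hyper_en} to conclude $\mathcal{L}^k(G)=\mathcal{L}\big(\mathcal{L}^{k-1}(G)\big)$ is hyperenergetic. Your write-up is in fact slightly more careful than the paper's, since you explicitly verify the order condition $n_{k-1}\ge 5$ and justify the degree monotonicity via $\delta\mapsto 2\delta-2$, both of which the paper leaves implicit.
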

\begin{proof}
	Since $G$ is a graph where $d_u + d_v \geq 6$ for each edge $e = uv$, $\mathcal{L}(G)$ is a graph with minimum degree $\delta \geq 4$. The number of edges in $\mathcal{L}(G)$ is $m_1 = \frac{1}{2} \sum\limits_{i=1}^{m_0} d_i \geq \frac{1}{2}(4m_0) = 2m_0 = 2n_1$, which implies that the graph $\mathcal{L}^2(G)$ is hyperenergetic by Theorem \ref{hyper_en}. Note that the minimum degree increases in the line graphs $\mathcal{L}^k(G)$ as $k$ increases for $k \geq 2$ and it is at least 4. Therefore, $m_k \geq 2n_k$ and by Theorem \ref{hyper_en}, all iterated line graphs $\mathcal{L}^k(G)$ are hyperenergetic for $k \geq 2$.
	
\end{proof}

\section{Spectra and Energy of complement of iterated line graphs with the property $\rho$}

\begin{lemma}\label{compl_lem}
	Let $\mathcal{L}(G)$ be the line graph of a graph $G$ with order $n_0$ and size $m_0$. If $\mathcal{L}(G)$ has a non-negative eigenvalue $\lambda_{1(j)}$, then its complement $\overline{\mathcal{L}(G)}$ has a negative eigenvalue $\overline{\lambda}_{1(m_0-j+2)}$ for $j\in \{2, 3, \ldots , m_0\}$. In a particular, if $\mathcal{L}(G)$ has eigenvalue $-2$, then its complement $\overline{\mathcal{L}(G)}$ has  eigenvalue $1$.
\end{lemma}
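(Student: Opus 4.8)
The plan is to establish the two assertions separately, since each draws on a distinct tool already recorded in the Preliminaries.

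For the first assertion I would apply the eigenvalue inequality \eqref{eig_inq} directly. The key preliminary observation is that $\mathcal{L}(G)$ has order $m_0$, its vertices being the $m_0$ edges of $G$, so both $\mathcal{L}(G)$ and $\overline{\mathcal{L}(G)}$ are graphs on $m_0$ vertices. Taking $n = m_0$ in \eqref{eig_inq} yields
$$\lambda_{1(j)} + \overline{\lambda}_{1(m_0-j+2)} \le -1 \qquad \text{for } j \in \{2, 3, \ldots, m_0\}.$$
If the hypothesis $\lambda_{1(j)} \ge 0$ holds, I would rearrange this to $\overline{\lambda}_{1(m_0-j+2)} \le -1 - \lambda_{1(j)} \le -1 < 0$, giving the claimed negative eigenvalue of the complement. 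This part is essentially immediate once the order is correctly identified.

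For the special case, the target value is $-1-(-2) = 1$, which points squarely at Hagos' criterion, Theorem \ref{compl-1-l}. I would apply it to the graph $\mathcal{L}(G)$ at the eigenvalue $\lambda = -2$: the theorem asserts that $1 = -1 - (-2)$ lies in $Sp_A(\overline{\mathcal{L}(G)})$ exactly when some eigenvector $Y$ of $\mathcal{L}(G)$ for $-2$ satisfies ${\bf j}^T Y = 0$. The verification of this orthogonality condition is precisely what Proposition \ref{eig_space-2} supplies: in any line graph the whole eigenspace of $-2$ is orthogonal to ${\bf j}$, so every eigenvector associated with $-2$ is a valid witness. Hence the hypothesis of Theorem \ref{compl-1-l} is automatically met and $1 \in Sp_A(\overline{\mathcal{L}(G)})$ follows.

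Neither step involves heavy computation, so the main thing to be careful about is bookkeeping rather than a genuine obstacle. First I must be sure that the order fed into \eqref{eig_inq} is $m_0$ (the order of $\mathcal{L}(G)$) and not the order or size of $G$ itself, and that the index restriction $j \in \{2, \ldots, m_0\}$ is respected, since \eqref{eig_inq} is not claimed at the Perron index $j = 1$. For the special case the only subtlety is that the hypothesis ``$\mathcal{L}(G)$ has eigenvalue $-2$'' makes the eigenspace of $-2$ nonempty, so Proposition \ref{eig_space-2} does produce an actual witness eigenvector $Y$ to feed into Theorem \ref{compl-1-l}.
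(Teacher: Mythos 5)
Your proposal is correct and follows exactly the paper's own argument: the first claim via inequality \eqref{eig_inq} applied to $\mathcal{L}(G)$ of order $m_0$, and the special case by combining Proposition \ref{eig_space-2} (orthogonality of the $-2$ eigenspace to ${\bf j}$) with Theorem \ref{compl-1-l} to conclude $1\in Sp_A(\overline{\mathcal{L}(G)})$. Your extra bookkeeping remarks (feeding $n=m_0$ into \eqref{eig_inq}, excluding the Perron index) only make explicit what the paper leaves implicit.
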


\begin{proof}
	If $G$ is a graph of order $n_0$ and size $m_0$, then its line graph $\mathcal{L}(G)$ has order $m_0$. If $\mathcal{L}(G)$ has  a  non-negative eigenvalue $\lambda_{1(j)}$, then its complement $\overline{\mathcal{L}(G)}$ has eigenvalue an $\overline{\lambda}_{1(m_0-j+2)}\le -1-\lambda_{1(j)}$ by  inequality \ref{eig_inq}, which is negative for $j\in \{2, 3, \ldots , m_0\}$. If $-2$ is the eigenvalue of $\mathcal{L}(G)$, then by Proposition \ref{eig_space-2}, its eigenspace is orthogonal to all one's vector ${\bf j}$. Now by using Theorem \ref{compl-1-l}, the eigenvalue  $-1-(-2)=1$ is the eigenvalue of $\overline{\mathcal{L}(G)}$. This completes the proof.
\end{proof}
The papers \cite{Mojallal, Nikiforov, ramathesis, Ramane_Bp, Ramane-2023} explore exact relations between a regular graph $G$ and its complement. In the following, we extend these results to non-regular iterated line graphs with property $\rho$.
\begin{theorem}\label{comple_spec_enrg}
	Let $G$ be a graph of order $n_0$ and size $m_0$. If the graphs ${\mathcal{L}^k(G)}$  satisfy  the property $\rho$ with $-2$  multiplicity  $m_{k-1}-n_{k-1}$ for  $k\ge 1$, then the graphs $\overline{\mathcal{L}^k(G)}$ for  $k\ge 1$ have exactly two positive eigenvalues: the spectral radius and $1$ with multiplicity $m_{k-1}-n_{k-1}$. Furthermore \begin{equation}\label{rel_line_cpml}
		\mathcal{E}(\overline{\mathcal{L}^k(G)})=2\overline{\lambda}_{k(1)}+\frac{\mathcal{E}(\mathcal{L}^k(G))}{2}.\end{equation}
\end{theorem}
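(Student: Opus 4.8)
The plan is to determine the full spectrum of $\overline{\mathcal{L}^k(G)}$ from that of $\mathcal{L}^k(G)$ and then read off the energy. First I would record the spectral shape of $\mathcal{L}^k(G)$ under property $\rho$. Since $n_k = m_{k-1}$ and the multiplicity of $-2$ is $m_{k-1}-n_{k-1}$, the graph $\mathcal{L}^k(G)$ has exactly $n_k-(m_{k-1}-n_{k-1}) = n_{k-1}$ non-negative eigenvalues $\lambda_{k(1)} \ge \cdots \ge \lambda_{k(n_{k-1})} \ge 0$, and its remaining $m_{k-1}-n_{k-1}$ eigenvalues all equal $-2$.

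Next I would produce the eigenvalue $1$ of the complement with the correct multiplicity. Because $\mathcal{L}^k(G) = \mathcal{L}(\mathcal{L}^{k-1}(G))$ is a line graph, Proposition \ref{eig_space-2} guarantees that the whole $(m_{k-1}-n_{k-1})$-dimensional eigenspace of $-2$ is orthogonal to ${\bf j}$; feeding each such eigenvector into Theorem \ref{compl-1-l} yields $-1-(-2)=1$ as an eigenvalue of $\overline{\mathcal{L}^k(G)}$, exactly as in Lemma \ref{compl_lem}, so $1$ occurs in $Sp_A(\overline{\mathcal{L}^k(G)})$ with multiplicity at least $m_{k-1}-n_{k-1}$. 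Together with the Perron (spectral-radius) eigenvalue $\overline{\lambda}_{k(1)}$, this already gives at least $m_{k-1}-n_{k-1}+1$ positive eigenvalues.

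To show there are no more, I would invoke the complementarity inequality \eqref{eig_inq}: for each $j \in \{2, \ldots, n_{k-1}\}$ the eigenvalue $\lambda_{k(j)} \ge 0$ forces $\overline{\lambda}_{k(n_k-j+2)} \le -1-\lambda_{k(j)} < 0$. As $j$ ranges over these $n_{k-1}-1$ values, the indices $n_k-j+2$ fill the bottom positions $n_k - n_{k-1}+2, \ldots, n_k$, so $\overline{\mathcal{L}^k(G)}$ has at least $n_{k-1}-1$ negative eigenvalues and hence at most $n_k-(n_{k-1}-1) = m_{k-1}-n_{k-1}+1$ non-negative ones. Comparing the two bounds pins the count exactly: there are precisely $m_{k-1}-n_{k-1}+1$ positive eigenvalues and no zero eigenvalue, so the positive part of the spectrum consists of the simple spectral radius $\overline{\lambda}_{k(1)}$ together with $1$ of multiplicity $m_{k-1}-n_{k-1}$. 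The energy is then immediate: since the trace vanishes, $\mathcal{E}(\overline{\mathcal{L}^k(G)})$ equals twice the sum of the positive eigenvalues, namely $2\big(\overline{\lambda}_{k(1)} + (m_{k-1}-n_{k-1})\big)$; property $\rho$ gives $\mathcal{E}(\mathcal{L}^k(G)) = 4(m_{k-1}-n_{k-1})$, whence $\frac{1}{2}\mathcal{E}(\mathcal{L}^k(G)) = 2(m_{k-1}-n_{k-1})$ and formula \eqref{rel_line_cpml} follows.

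The delicate step is the two-sided count in the third paragraph: one must check that the lower bound on positive eigenvalues and the upper bound on non-negative eigenvalues meet exactly, which simultaneously rules out hidden zero eigenvalues and confirms that the spectral radius is a single eigenvalue strictly larger than $1$ (so that the multiplicity of $1$ is \emph{exactly} $m_{k-1}-n_{k-1}$ rather than one more). The strict inequality $\overline{\lambda}_{k(1)} > 1$ uses that $\overline{\mathcal{L}^k(G)}$ is far from a disjoint union of isolated edges, the only graphs whose spectral radius equals $1$.
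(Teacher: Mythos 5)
Your overall route is the same as the paper's: the paper packages your second and third paragraphs into its Lemma \ref{compl_lem}, whose proof is exactly Proposition \ref{eig_space-2} plus Theorem \ref{compl-1-l} for the eigenvalue $1$ and inequality \eqref{eig_inq} for the negative eigenvalues, and then performs the same count and the same energy computation. Your explicit two-sided count (at least $m_{k-1}-n_{k-1}+1$ positive eigenvalues versus at most $m_{k-1}-n_{k-1}+1$ non-negative ones), which also rules out zero eigenvalues, is a slightly more careful rendering of what the paper does implicitly, and your derivation of \eqref{rel_line_cpml} is correct.

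The genuine flaw is in your last paragraph. The claim that $\overline{\lambda}_{k(1)}>1$ always holds because ``$\overline{\mathcal{L}^k(G)}$ is far from a disjoint union of isolated edges'' is false. Take $G=K_4$ and $k=1$: then $\mathcal{L}(K_4)=K_{2,2,2}$ has spectrum $\{4,0^3,(-2)^2\}$, so it satisfies property $\rho$ with $-2$ of multiplicity $m_0-n_0=2$ (and the hypothesis holds for all $k\ge 1$), yet $\overline{\mathcal{L}(K_4)}=3K_2$ is precisely a disjoint union of three edges, with spectral radius exactly $1$ and eigenvalue $1$ of total multiplicity $3=m_0-n_0+1$, not $m_0-n_0$. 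So you cannot conclude that the spectral radius is simple and strictly larger than $1$, nor that the multiplicity of $1$ is exactly $m_{k-1}-n_{k-1}$; the paper is careful here, asserting only $\overline{\lambda}_{k(1)}\ge 1$, with strict inequality only when $\overline{\mathcal{L}^k(G)}$ is connected. Note that the damage is confined to that multiplicity claim: your count of $m_{k-1}-n_{k-1}+1$ positive eigenvalues (with multiplicity) and the identity $\mathcal{E}(\overline{\mathcal{L}^k(G)})=2\big(\overline{\lambda}_{k(1)}+m_{k-1}-n_{k-1}\big)$ remain valid whether or not $\overline{\lambda}_{k(1)}=1$, so \eqref{rel_line_cpml} survives. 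One smaller point: when you add the Perron value to the $m_{k-1}-n_{k-1}$ copies of $1$ in your lower bound, you should justify that it is not already counted; this follows because, by Perron--Frobenius applied to a component attaining the spectral radius, some spectral-radius eigenvector is non-negative and nonzero, hence not orthogonal to ${\bf j}$, whereas your eigenvectors for $1$ all are orthogonal to ${\bf j}$.
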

\begin{proof}
	It is given that the iterated line graphs $\mathcal{L}^k(G)$ of $G$ for $k \ge 1$ have all negative eigenvalues equal to $-2$ with multiplicity $m_{k-1} - n_{k-1} = n_k - n_{k-1}$. This implies that the remaining $n_{k-1}$ eigenvalues of $\mathcal{L}^k(G)$ are non-negative. By Lemma \ref{compl_lem}, the complement of the iterated line graphs $\overline{\mathcal{L}^k(G)}$ has negative eigenvalues $\overline{\lambda}_{k(n_k-j+2)}$ for $j \in \{2, 3, \ldots, n_{k-1}\}$ and positive eigenvalues $\overline{\lambda}_{k(n_k-j+2)} = 1$ for $j \in \{n_{k-1}+1, \ldots, n_k\}$.
	The only remaining eigenvalue of $\overline{\mathcal{L}^k(G)}$ is the spectral radius $\overline{\lambda}_{k(1)}$, which must be greater than or equal to $1$. If $\overline{\mathcal{L}^k(G)}$ is connected, then $\overline{\lambda}_{k(1)} > 1$. Thus, $\overline{\mathcal{L}^k(G)}$ has exactly two positive eigenvalues: $\overline{\lambda}_{k(1)}$ and $1$, with the latter having multiplicity $n_k - n_{k-1}$ for $k \ge 1$. Therefore, the energy of $\overline{\mathcal{L}^k(G)}$ is $\mathcal{E}(\overline{\mathcal{L}^k(G)}) = 2(\overline{\lambda}_{k(1)} + n_k - n_{k-1})$.
	Since $\mathcal{E}({\mathcal{L}^k(G)}) = 4(n_k - n_{k-1})$ as $-2$ is the only negative eigenvalue of ${\mathcal{L}^k(G)}$ with multiplicity $n_k - n_{k-1}$, we obtain the required energy relation between $\mathcal{E}(\overline{\mathcal{L}^k(G)})$ and $\mathcal{E}({\mathcal{L}^k(G)})$ for $k \ge 1$.
\end{proof}

\begin{corollary}\label{nec_suf_copl_equ}
	Let $G$ be a graph of order $n_0$ and size $m_0$. If the graphs ${\mathcal{L}^k(G)}$  satisfy the property $\rho$ with $-2$  multiplicity  $m_{k-1}-n_{k-1}$ for  $k\ge 1$, then the graphs ${\mathcal{L}^k(G)}$ and $\overline{\mathcal{L}^k(G)}$ for $k\ge 1$ are
	equienergetic if and only if $n^-(\mathcal{L}^k(G))=\overline{\lambda}_{k(1)}$.
\end{corollary}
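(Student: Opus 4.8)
The plan is to leverage the energy identity already established in Theorem \ref{comple_spec_enrg} and reduce the equienergetic condition to a single algebraic equality. First I would recall that, under the stated hypotheses, $-2$ is the only negative eigenvalue of $\mathcal{L}^k(G)$ and it occurs with multiplicity $m_{k-1}-n_{k-1}=n_k-n_{k-1}$. Consequently the negative inertia satisfies $n^-(\mathcal{L}^k(G))=n_k-n_{k-1}$, and since each negative eigenvalue contributes its absolute value $2$ to the energy, one obtains $\mathcal{E}(\mathcal{L}^k(G))=4(n_k-n_{k-1})=4\,n^-(\mathcal{L}^k(G))$.

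Next I would invoke the relation \eqref{rel_line_cpml} of Theorem \ref{comple_spec_enrg}, namely
\[
	\mathcal{E}(\overline{\mathcal{L}^k(G)})=2\overline{\lambda}_{k(1)}+\frac{\mathcal{E}(\mathcal{L}^k(G))}{2}.
\]
Imposing the equienergetic condition $\mathcal{E}(\overline{\mathcal{L}^k(G)})=\mathcal{E}(\mathcal{L}^k(G))$ and substituting into this identity cancels half of $\mathcal{E}(\mathcal{L}^k(G))$, leaving $\tfrac{1}{2}\mathcal{E}(\mathcal{L}^k(G))=2\overline{\lambda}_{k(1)}$, that is $\mathcal{E}(\mathcal{L}^k(G))=4\overline{\lambda}_{k(1)}$.

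Finally, combining this with $\mathcal{E}(\mathcal{L}^k(G))=4\,n^-(\mathcal{L}^k(G))$ from the first step yields $n^-(\mathcal{L}^k(G))=\overline{\lambda}_{k(1)}$. Since every implication used is an equivalence—the energy identity holds unconditionally for all such $k$, and the substitution step is reversible—reading the chain backwards recovers $\mathcal{E}(\overline{\mathcal{L}^k(G)})=\mathcal{E}(\mathcal{L}^k(G))$ from $n^-(\mathcal{L}^k(G))=\overline{\lambda}_{k(1)}$, establishing the stated equivalence for $k\ge 1$. There is no genuine obstacle here; the only point requiring care is matching the factor $4$ on both sides, which forces the explicit identification of $n^-(\mathcal{L}^k(G))$ with the multiplicity of $-2$—an immediate consequence of the property $\rho$.
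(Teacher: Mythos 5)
Your proposal is correct and follows essentially the same route as the paper: both rest on the energy identity \eqref{rel_line_cpml} from Theorem \ref{comple_spec_enrg}, the identification $n^-(\mathcal{L}^k(G))=n_k-n_{k-1}$ from property $\rho$, and reversible algebra to reduce equienergeticity to $n^-(\mathcal{L}^k(G))=\overline{\lambda}_{k(1)}$. The only cosmetic difference is that you substitute $\mathcal{E}(\mathcal{L}^k(G))=4\,n^-(\mathcal{L}^k(G))$ where the paper substitutes $\mathcal{E}(\overline{\mathcal{L}^k(G)})=2(\overline{\lambda}_{k(1)}+n_k-n_{k-1})$, which is the same computation arranged differently.
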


\begin{proof}
	The energy relation \eqref{rel_line_cpml} between $\mathcal{E}(\overline{\mathcal{L}^k(G)})$ and $\mathcal{E}(\mathcal{L}^k(G))$ by Theorem \ref{comple_spec_enrg} can be expressed as $2\mathcal{E}(\overline{\mathcal{L}^k(G)}) - \mathcal{E}(\mathcal{L}^k(G)) = 4\overline{\lambda}_{k(1)}$, or equivalently, $\mathcal{E}(\overline{\mathcal{L}^k(G)}) - \mathcal{E}(\mathcal{L}^k(G)) = 4\overline{\lambda}_{k(1)} - \mathcal{E}(\overline{\mathcal{L}^k(G)})$. Now, the graphs $\mathcal{L}^k(G)$ and $\overline{\mathcal{L}^k(G)}$ for $k \ge 1$ are equienergetic if and only if $4\overline{\lambda}_{k(1)} - \mathcal{E}(\overline{\mathcal{L}^k(G)}) = 0$ or $4\overline{\lambda}_{k(1)} = \mathcal{E}(\overline{\mathcal{L}^k(G)})$. Since $\mathcal{E}(\overline{\mathcal{L}^k(G)}) = 2(\overline{\lambda}_{k(1)} + n_k - n_{k-1})$, we obtain $\overline{\lambda}{k(1)} = n_k - n_{k-1}$. But, it is given that $n^-(\mathcal{L}^k(G)) = n_k - n_{k-1}$, which completes the proof.
\end{proof}


\begin{corollary}\label{compl_equi}
	Let $G$ be a graph of order $n_0$ and size $m_0$. If the graphs ${\mathcal{L}^k(G)}$ satisfy the property $\rho$ with $-2$ having multiplicity $m_{k-1} - n_{k-1}$ for $k \ge 1$, then the complements of the iterated line graphs $\overline{\mathcal{L}^k(G)}$ are mutually equienergetic for $k \ge 1$ if and only if they have the same spectral radius.	
\end{corollary}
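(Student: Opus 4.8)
The plan is to read off everything from the explicit energy expression already established in Theorem \ref{comple_spec_enrg}. There it is shown that, under the present hypotheses, each complement $\overline{\mathcal{L}^k(G)}$ has exactly two distinct positive eigenvalues, namely its spectral radius $\overline{\lambda}_{k(1)}$ and the eigenvalue $1$ of multiplicity $n_k-n_{k-1}$, so that relation \eqref{rel_line_cpml} together with $\mathcal{E}(\mathcal{L}^k(G))=4(n_k-n_{k-1})$ yields the compact formula
\begin{equation*}
	\mathcal{E}\big(\overline{\mathcal{L}^k(G)}\big)=2\big(\overline{\lambda}_{k(1)}+n_k-n_{k-1}\big).
\end{equation*}
Thus the energy of each complement splits into a part $2(n_k-n_{k-1})$ coming from the eigenvalue $1$ and its multiplicity, and a part $2\overline{\lambda}_{k(1)}$ coming from the spectral radius, and only the latter depends on the individual graph.

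First I would fix $k$ and take two graphs $G_1,G_2$ from the class, so that both $\overline{\mathcal{L}^k(G_1)}$ and $\overline{\mathcal{L}^k(G_2)}$ obey the conclusion of Theorem \ref{comple_spec_enrg}. Since equienergeticity is defined only for graphs of equal order, these complements share the common order $n_k$, hence also the same $n_{k-1}$, so the additive constant $n_k-n_{k-1}$ in the energy formula is identical for the two. Subtracting the two energy expressions then gives
\begin{equation*}
	\mathcal{E}\big(\overline{\mathcal{L}^k(G_1)}\big)-\mathcal{E}\big(\overline{\mathcal{L}^k(G_2)}\big)=2\big(\overline{\lambda}_{k(1)}(G_1)-\overline{\lambda}_{k(1)}(G_2)\big),
\end{equation*}
from which the equivalence is immediate: the two complements are equienergetic precisely when their spectral radii coincide. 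Applying this pairwise comparison across the whole family then yields mutual equienergeticity exactly when all the spectral radii agree, which is the assertion.

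The computation is a one-line consequence of the energy formula, so the only point requiring care is the bookkeeping that makes the constant $n_k-n_{k-1}$ common to every member of the family. This rests on the observation that all the complements under comparison have the same order $n_k$, a prerequisite for the notion of equienergetic to apply, and that the multiplicity $n_k-n_{k-1}$ of the eigenvalue $1$ is then fixed as well; once this is recorded, the positive spectrum of $\overline{\mathcal{L}^k(G)}$ contributes the fixed amount $2(n_k-n_{k-1})$ to every energy in the family and only the spectral radius is free to vary. I expect this to be the whole difficulty, as there is no genuine obstacle beyond making the dependence on the common order explicit.
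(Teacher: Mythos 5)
Your overall route is the same as the paper's: both arguments rest on Theorem \ref{comple_spec_enrg}, writing $\mathcal{E}\big(\overline{\mathcal{L}^k(G)}\big)=2\overline{\lambda}_{k(1)}+2(n_k-n_{k-1})$ and observing that, once the additive constant $2(n_k-n_{k-1})$ is known to be the same for every member of the family, equienergeticity of the complements is equivalent to equality of their spectral radii. The difference lies in how that constant is shown to be common, and here your argument has a genuine gap: the inference ``these complements share the common order $n_k$, hence also the same $n_{k-1}$'' is a non-sequitur. The order of $\mathcal{L}^k(G)$ equals the \emph{size} of $\mathcal{L}^{k-1}(G)$, and a graph's size does not determine its order: for instance $C_4$ and $P_5$ both have four edges, so their line graphs have the same order even though the underlying orders are $4$ and $5$. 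Thus two graphs in the class could in principle satisfy $n_k(G_1)=n_k(G_2)$ while $n_{k-1}(G_1)\ne n_{k-1}(G_2)$, in which case your subtraction step would be comparing energies whose constant terms differ --- exactly the situation the bookkeeping was supposed to rule out.

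What actually makes the constant common is the hypothesis on the class combined with the earlier results, and this is how the paper argues: the graphs $G$ share both the order $n_0$ and the size $m_0$, and the line graphs $\mathcal{L}^k(G)$ themselves are mutually equienergetic with energy $\mathcal{E}\big(\mathcal{L}^k(G)\big)=4(m_{k-1}-n_{k-1})$; the paper quotes this fact and substitutes it into relation \eqref{rel_line_cpml}, so the term $\tfrac{1}{2}\mathcal{E}\big(\mathcal{L}^k(G)\big)$ is the same for all members and only $2\overline{\lambda}_{k(1)}$ varies. Equivalently: equal order $n_k$ together with equal line-graph energy $4(n_k-n_{k-1})$ forces equal $n_{k-1}$, which is the implication your ``hence'' silently assumed. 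Note that for $k=1$ and $k=2$ your conclusion happens to hold anyway, since $n_0$ and $n_1=m_0$ are fixed by hypothesis, but for $k\ge 3$ your argument as written does not close; replacing the faulty inference by the appeal to the mutual equienergeticity of the graphs $\mathcal{L}^k(G)$ (that is, by relation \eqref{rel_line_cpml} as in the paper) repairs it.
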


\begin{proof}
	If $G$ is a graph of order $n_0$ and size $m_0$, then the graphs ${\mathcal{L}^k(G)}$ have order $n_k$ and size $m_k$. If the graphs ${\mathcal{L}^k(G)}$ satisfy the property $\rho$ with $-2$ having multiplicity $m_{k-1} - n_{k-1}$ for $k \ge 1$, then all the iterated line graphs $\mathcal{L}^k(G)$ of such graphs $G$ with the same order $n_0$ and the same size $m_0$ are mutually equienergetic with energy $4(m_{k-1} - n_{k-1})$. Using this fact in the energy relation \eqref{rel_line_cpml} between $\mathcal{E}(\overline{\mathcal{L}^k(G)})$ and $\mathcal{E}(\mathcal{L}^k(G))$ by Theorem \ref{comple_spec_enrg} completes the proof.
\end{proof}

\begin{example}
	The graphs $\overline{\mathcal{L}^k(\Gamma_1)}$ and $\overline{\mathcal{L}^k(\Gamma_2)}$ in Remark \ref{Join_remark} are equienergetic for $k \ge 1$ as they have the same quotient matrices and because the spectral radius of a quotient matrix coincides with the spectral radius of the corresponding graph. Moreover, in Remark \ref{Join_remark} if $H_{s_1}$ and $H_{s_2}$ are non co-spectral (co-spectral) graphs, then we obtain non co-spectral (co-spectral) graphs $\overline{\mathcal{L}^k(\Gamma_1)}$ and $\overline{\mathcal{L}^k(\Gamma_2)}$ respectively for $k \ge 0$ by using Proposition \ref{Cospec_Prop}.
\end{example} 

\begin{remark}
	If $k \ge 1$, then the results in Theorem \ref{comple_spec_enrg}, Corollary \ref{nec_suf_copl_equ} and Corollary \ref{compl_equi} hold true for the iterated line graphs ${\mathcal{L}^k(G)}$ in Theorems \ref{lineofjoin} to \ref{reg2_ite_lin}. Similarly, if $k \ge 2$ these results hold true for the iterated line graphs ${\mathcal{L}^k(G)}$ in Theorem \ref{Iter_line-2} and Corollary \ref{Cor_Iter_line-2}.	
\end{remark}

\begin{remark}
	In \cite{ramane_copl_equi}, Ramane et al. obtained equienergetic regular graphs using the complement of iterated regular line graphs $\overline{\mathcal{L}^k(G)}$ for $k\ge 2$ by taking regular graphs $G$ with the same order and the same degree $r\ge 3$. This approach characterizes a large class of pairs of non-trivial equienergetic regular graphs. Furthermore, it is noted that all the results in this paper are particular cases of Corollary \ref{compl_equi} and Corollary \ref{nec_suf_copl_equ}.	
\end{remark}

\begin{theorem}
	Let $G$ be a graph with  $d_u+d_v\ge 6$ to each edge $e=uv$ in  $G$, then the graphs $\overline{\mathcal{L}^k(G)}$ are hyperenergetic for  $k\ge 2$ if ${\lambda}_{k(1)}\le \frac{n_k-1}{2}$.
\end{theorem}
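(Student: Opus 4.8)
The plan is to translate hyperenergeticity into a single lower bound on the spectral radius of the complement, and then to produce that bound from the hypothesis $\lambda_{k(1)}\le\frac{n_k-1}{2}$ together with the growth rate of the orders $n_k$. First I would observe that the hypothesis $d_u+d_v\ge 6$ on every edge is exactly the hypothesis of Theorem \ref{Iter_line-2}, so $\mathcal{L}^k(G)$ satisfies the property $\rho$ for $k\ge 2$, with $-2$ of multiplicity $m_{k-1}-n_{k-1}=n_k-n_{k-1}$ and energy $4(n_k-n_{k-1})$. Theorem \ref{comple_spec_enrg} then applies and gives $\mathcal{E}(\overline{\mathcal{L}^k(G)})=2(\overline{\lambda}_{k(1)}+n_k-n_{k-1})$. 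Since $\overline{\mathcal{L}^k(G)}$ has order $n_k$, hyperenergeticity means $\mathcal{E}(\overline{\mathcal{L}^k(G)})>2(n_k-1)$, and a one-line rearrangement shows this is equivalent to the single requirement $\overline{\lambda}_{k(1)}>n_{k-1}-1$.

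Next I would bound $\overline{\lambda}_{k(1)}$ from below. Writing $A=A(\mathcal{L}^k(G))$ and $\overline{A}=A(\overline{\mathcal{L}^k(G)})$, we have $A+\overline{A}=\mathbf{J}-I$, the all-ones matrix minus the identity, whose largest eigenvalue is $n_k-1$ with eigenvector $\mathbf{j}$. Applying the Rayleigh quotient estimate $\lambda_1(M)\ge \mathbf{j}^T M\mathbf{j}/(\mathbf{j}^T\mathbf{j})$ to both $A$ and $\overline{A}$ (equivalently, using the subadditivity $\lambda_1(M_1+M_2)\le\lambda_1(M_1)+\lambda_1(M_2)$ of Weyl's inequality) yields the classical complement inequality $\lambda_{k(1)}+\overline{\lambda}_{k(1)}\ge n_k-1$. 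Substituting the hypothesis $\lambda_{k(1)}\le\frac{n_k-1}{2}$ then gives $\overline{\lambda}_{k(1)}\ge n_k-1-\lambda_{k(1)}\ge\frac{n_k-1}{2}$.

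It then remains to compare $\frac{n_k-1}{2}$ with $n_{k-1}-1$, and here I would reuse the degree argument from the proof of Theorem \ref{hyper_en_itLine}. The condition $d_u+d_v\ge 6$ forces the minimum degree of $\mathcal{L}^k(G)$ to be at least $4$ for every $k\ge 1$, so $m_k=\tfrac12\sum_v d_v\ge 2n_k$; since $n_{k+1}=m_k$, this gives $n_{k+1}\ge 2n_k$, i.e. $n_k\ge 2n_{k-1}$ for $k\ge 2$. Consequently $\overline{\lambda}_{k(1)}\ge\frac{n_k-1}{2}\ge\frac{2n_{k-1}-1}{2}=n_{k-1}-\tfrac12>n_{k-1}-1$, which is precisely the inequality isolated in the first step, so $\overline{\mathcal{L}^k(G)}$ is hyperenergetic for $k\ge 2$.

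I expect the substantive content to lie entirely in the growth estimate $n_k\ge 2n_{k-1}$, which is already available from Theorem \ref{hyper_en_itLine}; the complement spectral-radius inequality $\lambda_{k(1)}+\overline{\lambda}_{k(1)}\ge n_k-1$ is standard. The only delicate points are bookkeeping ones: making sure that after the reduction it is $\overline{\lambda}_{k(1)}$, not $\lambda_{k(1)}$, that must be bounded, so that the hypothesis is used on the correct spectral radius, and checking that the final inequality $n_{k-1}-\tfrac12>n_{k-1}-1$ is strict, which is what guarantees the energy strictly exceeds that of the complete graph $K_{n_k}$ rather than merely matching it.
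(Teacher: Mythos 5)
Your proof is correct, but it routes the key estimate differently from the paper. The paper's proof starts from Theorem \ref{hyper_en_itLine}: under $d_u+d_v\ge 6$ the graphs $\mathcal{L}^k(G)$ are themselves hyperenergetic for $k\ge 2$, so $\mathcal{E}(\mathcal{L}^k(G))\ge 2(n_k-1)$; it then substitutes this into the relation \eqref{rel_line_cpml} and adds the bound $\overline{\lambda}_{k(1)}\ge \frac{n_k-1}{2}$ (obtained exactly as you obtain it, from $\lambda_{k(1)}+\overline{\lambda}_{k(1)}\ge n_k-1$ and the hypothesis) to conclude $\mathcal{E}(\overline{\mathcal{L}^k(G)})\ge 2\cdot\frac{n_k-1}{2}+(n_k-1)=2(n_k-1)$. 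You instead use the exact value $\mathcal{E}(\mathcal{L}^k(G))=4(n_k-n_{k-1})$, reduce hyperenergeticity of the complement to the single threshold $\overline{\lambda}_{k(1)}>n_{k-1}-1$, and verify that threshold from the order growth $n_k\ge 2n_{k-1}$. The underlying combinatorial input is identical in both arguments --- minimum degree at least $4$ in $\mathcal{L}^k(G)$, hence $m_k\ge 2n_k$ --- but the paper consumes it through Hou--Gutman's Theorem \ref{hyper_en} applied to $\mathcal{L}^k(G)$, while you consume it directly as growth of the orders. Your route has two small dividends: it isolates exactly what hyperenergeticity of $\overline{\mathcal{L}^k(G)}$ amounts to in this setting ($\overline{\lambda}_{k(1)}>n_{k-1}-1$), and it ends with a strict inequality with margin $\tfrac{1}{2}$, whereas the paper's chain as written terminates in the non-strict $\mathcal{E}(\overline{\mathcal{L}^k(G)})\ge 2(n_k-1)$; if hyperenergetic is read in the usual strict sense $\mathcal{E}>2(n-1)$, the paper's version requires tracking strictness back through Theorem \ref{hyper_en}, which your argument avoids.
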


\begin{proof}
	If $G$ is a graph with  $d_u+d_v\ge 6$ to each edge $e=uv$, then the iterated line graphs $\mathcal{L}^k(G)$  for  $k\ge 2$ are hyperenergetic by Theorem \ref{hyper_en_itLine}, that is $\mathcal{E}({\mathcal{L}^k(G)})\ge 2(n_k-1)$.
	The energy relation \ref{rel_line_cpml} between $\mathcal{E}(\overline{\mathcal{L}^k(G)})$ and $\mathcal{E}({\mathcal{L}^k(G)})$ by Theorem \ref{comple_spec_enrg} is 
	\begin{equation*}
		\mathcal{E}(\overline{\mathcal{L}^k(G)}) =  2\overline{\lambda}_{k(1)}+\frac{\mathcal{E}(\mathcal{L}^k(G))}{2} 
		\ge  2\overline{\lambda}_{k(1)} +\frac{1}{2} 2(n_k-1) 
		= 2\overline{\lambda}_{k(1)} + (n_k-1).
	\end{equation*}
	It is well known that $\lambda_{k(1)}+\overline{\lambda}_{k(1)}\ge n_k-1$, which implies $\overline{\lambda}_{k(1)}\ge n_k-1-\lambda_{k(1)}\ge n_k-1-\frac{n_k-1}{2} = \frac{n_k-1}{2}$ if ${\lambda}_{k(1)}\le \frac{n_k-1}{2}$. By using this we get  $\mathcal{E}(\overline{\mathcal{L}^k(G)})\ge 2\frac{n_k-1}{2}+(n_k-1)=2(n_k-1)$ which completes the proof.
\end{proof}

\section{Other results}
\begin{proposition}\label{indep_num_lin}
	Let $G$ be a graph of order $n_0$ and size $m_0$. If the graphs ${\mathcal{L}^k(G)}$  satisfy  the property $\rho$ with $-2$  multiplicity  $m_{k-1}-n_{k-1}$ for  $k\ge 1$, then $\alpha \big(\mathcal{L}^k(G)\big)\le n_{k-1}$ and $\alpha \big(\overline{\mathcal{L}^k(G)}\big)\le n^-\big(\mathcal{L}^k(G)\big)+1$.
\end{proposition}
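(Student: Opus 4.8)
The plan is to bound the independence number of each graph by its number of nonnegative eigenvalues, via Cauchy's interlacing theorem. First I would record the underlying inertia bound. If $S$ is an independent set in a graph $H$ of order $N$, then the principal submatrix of $A(H)$ indexed by $S$ is the $|S|\times|S|$ zero matrix, all of whose eigenvalues equal $0$. By Cauchy's interlacing theorem the $i$-th largest eigenvalue of $H$ dominates the $i$-th largest eigenvalue of this submatrix for $1\le i\le |S|$, so $H$ has at least $|S|$ nonnegative eigenvalues. Consequently $\alpha(H)\le n^+(H)+n^0(H)$, the total number of nonnegative eigenvalues of $H$.

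For the first inequality I would apply this bound with $H=\mathcal{L}^k(G)$. Since $\mathcal{L}^k(G)$ satisfies the property $\rho$, its only negative eigenvalue is $-2$ with multiplicity $m_{k-1}-n_{k-1}=n_k-n_{k-1}$. Hence, out of the $n_k$ eigenvalues, precisely $n_k-(n_k-n_{k-1})=n_{k-1}$ are nonnegative, which gives $\alpha(\mathcal{L}^k(G))\le n_{k-1}$ at once.

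For the second inequality I would apply the same bound with $H=\overline{\mathcal{L}^k(G)}$. By Theorem \ref{comple_spec_enrg} this complement has exactly two positive eigenvalues, namely the spectral radius $\overline{\lambda}_{k(1)}$ (of multiplicity one) together with $1$ (of multiplicity $m_{k-1}-n_{k-1}=n_k-n_{k-1}$), so $n^+(\overline{\mathcal{L}^k(G)})=1+(n_k-n_{k-1})=n^-(\mathcal{L}^k(G))+1$. The decisive point is that $\overline{\mathcal{L}^k(G)}$ carries no zero eigenvalue: its remaining $n_{k-1}-1$ eigenvalues are the $\overline{\lambda}_{k(n_k-j+2)}$ for $j\in\{2,\ldots,n_{k-1}\}$, and since the corresponding $\lambda_{k(j)}$ are nonnegative, inequality \eqref{eig_inq} forces $\overline{\lambda}_{k(n_k-j+2)}\le -1-\lambda_{k(j)}\le -1<0$. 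Thus $n^0(\overline{\mathcal{L}^k(G)})=0$, the number of nonnegative eigenvalues coincides with $n^+(\overline{\mathcal{L}^k(G)})=n^-(\mathcal{L}^k(G))+1$, and the interlacing bound yields $\alpha(\overline{\mathcal{L}^k(G)})\le n^-(\mathcal{L}^k(G))+1$.

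The step I expect to be the main obstacle is exactly this vanishing of the nullity of the complement: the stronger form $\alpha\le n^+$ (rather than the looser $\alpha\le n^++n^0$) is what makes the second estimate sharp, so the argument rests on using \eqref{eig_inq} to certify that none of the remaining eigenvalues of $\overline{\mathcal{L}^k(G)}$ is zero. Once that is settled, both bounds reduce to counting nonnegative eigenvalues furnished by the property $\rho$ and Theorem \ref{comple_spec_enrg}.
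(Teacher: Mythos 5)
Your proposal is correct and follows essentially the same route as the paper: the paper invokes the inertia bound in the form $n^-(H)\le |V(H)|-\alpha(H)$ (citing it as well known, where you derive it via Cauchy interlacing) and applies it to $\mathcal{L}^k(G)$ and to $\overline{\mathcal{L}^k(G)}$ using exactly the spectral data from property $\rho$ and Theorem \ref{comple_spec_enrg}. Your counting of nonnegative eigenvalues of the complement (two positive groups, zero nullity via inequality \eqref{eig_inq}) is just the complementary formulation of the paper's counting of its $n_{k-1}-1$ negative eigenvalues, so the two arguments coincide in substance.
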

\begin{proof}
	If $G$ is a graph of order $n_0$, it is well known that $n^-(G)\le n_0-\alpha(G)$. Using this fact for the graphs ${\mathcal{L}^k(G)}$, we get that $m_{k-1}-n_{k-1} = n_{k}-n_{k-1}\le n_k-\alpha\big(\mathcal{L}^k(G)\big)$ which implies $\alpha \big(\mathcal{L}^k(G)\big)\le n_{k-1}$ for $k\ge 1$. Again using the same fact for the graphs $\overline{\mathcal{L}^k(G)}$, we get that $n_{k-1}-1\le n_k-\alpha\big(\overline{\mathcal{L}^k(G)}\big)$ or $-(n_k-n_{k-1}+1)\le -\alpha\big(\overline{\mathcal{L}^k(G)}\big)$ which gives  $\alpha \big(\overline{\mathcal{L}^k(G)}\big)\le n^-\big(\mathcal{L}^k(G)\big)+1$ for $k\ge 1$.
\end{proof}

\noindent The following result is interesting regarding the minimum order of a graph where both the graph $G$ and its complement are connected and the line graph $\mathcal{L}(G)$ satisfies property $\rho$.

\begin{theorem}\label{min_order_thm}
	The smallest possible order of a connected graph $G$, where its line graph satisfies the property $\rho$ and the complement graph $\overline{G}$	is also connected, is $7$.
\end{theorem}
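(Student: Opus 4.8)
The plan is to pass to spectral language and then split the statement into an existence claim (an admissible graph of order $7$) and a non-existence claim (none of order $\le 6$). By relation \eqref{line_eig_snles_eqn}, $\mathcal{L}(G)$ satisfies $\rho$ if and only if $Sp_Q(G)\cap(0,2)=\emptyset$; for connected $G$ this means $q_{min}(G)\ge 2$ when $G$ is non-bipartite, and (since $Q$ and $L$ are cospectral in the bipartite case) algebraic connectivity at least $2$ when $G$ is bipartite. Also, $\overline{G}$ is disconnected exactly when $G$ is a join $\overline{A}\vee\overline{B}$, so ``$\overline{G}$ connected'' means ``$G$ is not a join''. Thus I must produce one connected non-join graph of order $7$ with no signless Laplacian eigenvalue in $(0,2)$, and exclude all connected non-join graphs of order at most $6$.

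For existence I would take $G_0=K_{2,2,2}$ together with one extra vertex $u$ joined to exactly one vertex of each of the three parts; equivalently $G_0=\overline{S}$, where $S$ is the spider obtained from $K_{1,3}$ by subdividing each edge once. Then $\overline{G_0}=S$ is a tree (connected) and $G_0$ is connected, so $G_0$ is not a join. The graph $G_0$ carries an $S_3$-symmetry permuting the three parts; reducing $Q(G_0)$ along its symmetric component and its two-dimensional isotypic component yields the eigenvalues $\tfrac{7\pm\sqrt5}{2}$ (each of multiplicity $2$) together with the roots of $\lambda^{3}-16\lambda^{2}+74\lambda-96$. Since $\tfrac{7-\sqrt5}{2}>2$ and the cubic is negative at $\lambda=2$ but positive slightly above (its least root being about $2.2$), every eigenvalue exceeds $2$; hence $Sp_Q(G_0)\cap(0,2)=\emptyset$ and $\mathcal{L}(G_0)$ satisfies $\rho$, giving an admissible graph of order $7$.

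For the lower bound I would first record two reductions. First, a pendant vertex forces failure: if $\deg v=1$ then the indicator ${\bf e}_v$ has Rayleigh quotient $1$ for $Q$, so $q_{min}(G)\le 1$, and in the bipartite case, projecting ${\bf e}_v$ off the simple null eigenvector of $Q$ gives a second-smallest eigenvalue $\le \tfrac{n}{n-1}<2$; either way an eigenvalue lands in $(0,2)$. Hence every admissible $G$ has $\delta(G)\ge 2$, i.e.\ $\Delta(\overline{G})\le n-3$. Second, from $Q(G)=(n-2)I+J-Q(\overline{G})$ (with $J={\bf j}{\bf j}^{T}$) and interlacing for the rank-one perturbation $J$, one gets $q_{min}(G)\le (n-2)-q_2(\overline{G})$; so, when $\overline{G}$ is connected and $G$ non-bipartite, admissibility forces $q_2(\overline{G})\le n-4$.

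I would then run these constraints order by order. For $n\le 5$ the requirements $\Delta(\overline{G})\le n-3$ and $q_2(\overline{G})\le n-4$ leave only paths/cycles as candidate complements, all of which have $q_2>n-4$; a direct inspection (including the few bipartite cases) leaves exactly $C_4,K_4,K_{3,2},K_5$ as graphs whose line graph satisfies $\rho$, and each of these is a join, so has disconnected complement. For $n=6$ one needs $q_2(\overline{G})\le 2$ with $\overline{G}$ connected and subcubic; a finite check shows every such $\overline{G}$ has $q_2>2$ (spiders and double stars give $q_2\in\{\tfrac{3+\sqrt5}{2},3,\dots\}$, while $C_6$ and the prism give $q_2\ge 3$), so there is no non-bipartite admissible graph, and the bipartite candidates (e.g.\ $K_{3,3}-e$) are cleared by the algebraic-connectivity form of the condition. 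Combined with $G_0$ this yields least order $7$. I expect the $n=6$ exhaustion — bounding and then eliminating the finitely many subcubic connected complements together with the bipartite cases — to be the main obstacle, and a SageMath verification is the natural safeguard there.
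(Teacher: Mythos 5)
Your proposal is correct in its reductions and takes a genuinely different route from the paper. The paper's proof of Theorem \ref{min_order_thm} is a bare enumeration: it asserts that exactly thirteen connected graphs of order at most six have line graphs with property $\varrho$ (seven named ones plus six shown in a figure), observes that none has a connected complement, and then exhibits the order-seven graph by a figure; the underlying verification is implicitly computational, and no structural reason is given. You instead derive necessary conditions on the complement before enumerating anything: the translation of $\varrho$ via \eqref{line_eig_snles_eqn} into $Sp_Q(G)\cap(0,2)=\emptyset$ is right; the pendant-vertex Rayleigh-quotient argument correctly forces $\delta(G)\ge 2$, i.e.\ $\Delta(\overline{G})\le n-3$; and the identity $Q(G)+Q(\overline{G})=(n-2)I+J$ together with rank-one interlacing correctly yields $q_{min}(G)\le (n-2)-q_2(\overline{G})$, hence $q_2(\overline{G})\le n-4$ in the non-bipartite case. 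This kills $n\le 4$ outright and leaves only $\overline{G}\in\{P_5,C_5\}$ at $n=5$, both rightly discarded. Your existence certificate is also complete and correct: for $G_0=\overline{S}$ the $Q$-quotient matrix on the orbit partition has rows $(3,0,3)$, $(0,6,2)$, $(1,2,7)$, whose characteristic polynomial is $\lambda^3-16\lambda^2+74\lambda-96$ with least root in $(2,3)$, and the two-dimensional isotypic component contributes $\tfrac{7\pm\sqrt5}{2}$ with multiplicity two each, so indeed $Sp_Q(G_0)\subset(2,\infty)$. What your route buys is an explicit, hand-checkable certificate at order $7$ and a drastically pruned search space below it, where the paper only points to pictures.

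The one genuine gap is the $n=6$ exhaustion, which you flag yourself. The claim that every connected subcubic graph on six vertices has $q_2>2$ is only spot-checked (spiders, double stars, $C_6$, the prism); the remaining connected subcubic graphs on six vertices (the other trees, unicyclic graphs such as tadpoles and the net, the bicyclic ones, and $K_{3,3}$) still need to be processed, and the bipartite side needs the short enumeration you only gesture at: with $\delta(G)\ge 2$ and $\overline{G}$ connected, the bipartite candidates are exactly $K_{3,3}-e$, $K_{3,3}$ minus two independent edges, and $C_6$, whose algebraic connectivities are $\tfrac{7-\sqrt{17}}{2}$, at most that value, and $1$ respectively, all lying in $(0,2)$. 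These checks are finite and will succeed, so no step of your plan fails; but until they are written out, your argument is incomplete at precisely the point where the paper's own proof is also silent --- with the difference that your reductions shrink the unverified part to a list short enough to finish by hand.
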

\begin{proof}
	There are exactly $13$ non-isomorphic connected graphs of order up to 6 whose line graphs satisfy the property \( \rho \). These graphs include $C_4, K_4, K_{3, 2}, K_5, K_{4, 2}, K_{3, 3}, K_6$ and the graphs shown in Figure 2.

	\begin{figure}[h!]
		\centering
		\includegraphics[width=1.0\linewidth]{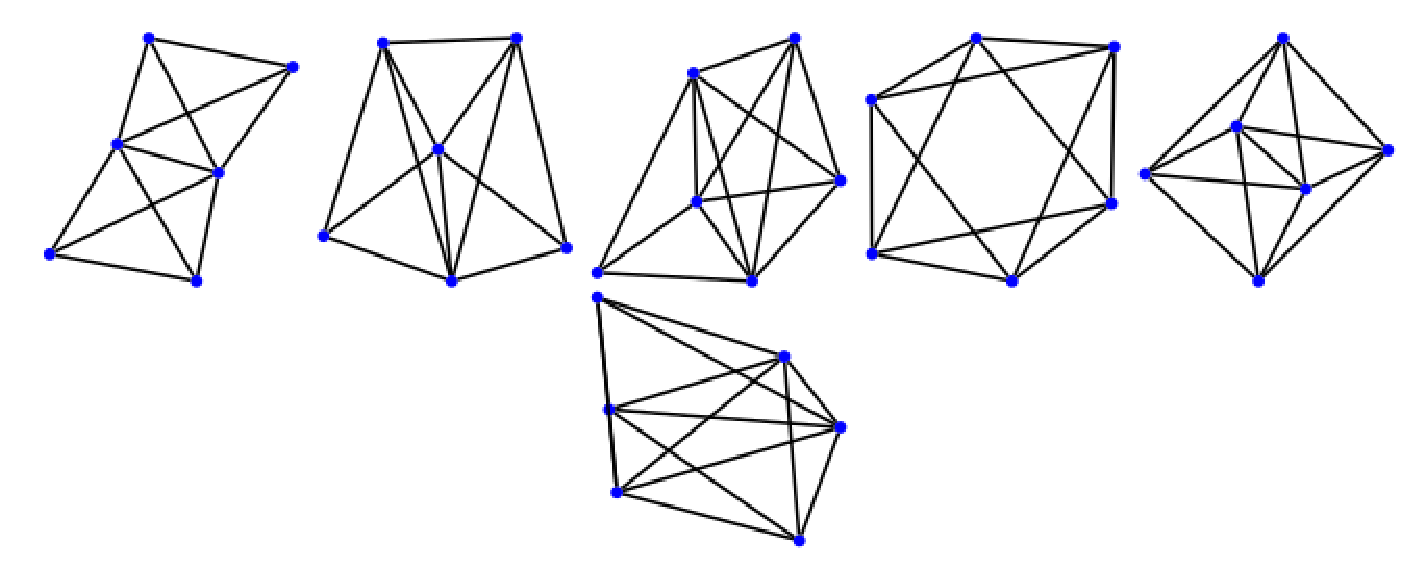}\\
		\label{Fig2}
		\caption{}
	\end{figure}
	
	\vspace{5mm}
	None of the graphs mentioned above have a connected complement. The only connected graph of order $7$ whose line graph satisfies the property $\rho$ and whose complement is also connected is shown in Figure $3$, which completes the proof.
	\begin{figure}[h!]
		\centering
		\includegraphics[width=0.3\linewidth]{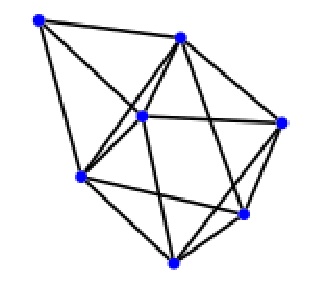}\\
		\label{Fig3}
		\caption{}
	\end{figure}
	
\end{proof}
\section*{Conclusion}\label{sec13}
In this paper, we have described several iterated line graphs $\mathcal{L}^k(G)$ with all negative eigenvalues equal to $-2$ and discussed their energy. We also explored the spectra and the energy of their complements. Additionally, we presented a large class of equienergetic graphs, extending some of the previous results. Although we have identified many classes of iterated line graphs with all negative eigenvalues equal to $-2$, one important question remains: to characterize all graphs with all negative eigenvalues equal to $-2$ which are not necessarily limited to line graphs.\\

\noindent
{\bf Data Availability:}
There is no data associated with this article.\\

\noindent
{\bf Conflicts of interest:} The authors have no conflict of interest.

\end{document}